\author{Michael K. Brown}
\author{Andrew J. Soto Levins}
\author{Prashanth Sridhar}
\newcommand{\Addresses}{{
	\vskip\baselineskip
  	\footnotesize
  	\noindent \textsc{Department of Mathematics and Statistics, Auburn University} \par\nopagebreak
	\noindent \textit{E-mail addresses:} \texttt{mkb0096@auburn.edu, pzs0094@auburn.edu}
    \\
    \\
  \noindent \textsc{Department of Mathematics and Statistics, Texas Tech University} \par\nopagebreak
	\noindent \textit{E-mail address:} \texttt{ansotole@ttu.edu}
    
 }}
\numberwithin{equation}{section}
\newtheorem{lemma}[equation]{Lemma}
\newtheorem{lem}[equation]{Lemma}
\newtheorem{prop}[equation]{Proposition}
\newtheorem{cor}[equation]{Corollary}
\newtheorem{claim*}{Claim}
\newtheorem{thm}[equation]{Theorem}
\theoremstyle{definition}
\newtheorem{defn}[equation]{Definition}
\newtheorem{dfn}[equation]{Definition}
\newtheorem{notation}[equation]{Notation}
\newtheorem{example}[equation]{Example}
\newtheorem{setup}[equation]{Setup}
\theoremstyle{remark}
\newtheorem{remark}[equation]{Remark}
\newcommand{\mfrak}[1]{\mathfrak{#1}}
\renewcommand{\k}{\Bbbk}
\renewcommand{\a}{\alpha}
\newcommand{\m}{\mfrak{m}}
\newcommand{\n}{\mfrak{n}}
\newcommand{\injdim}{\operatorname{inj\,dim}}
\newcommand{\amp}{\operatorname{amp}}
\newcommand{\Hom}{\operatorname{Hom}}
\newcommand{\RHom}{\text{{\bf R}$\Hom$}}
\newcommand{\RuHom}{\text{{\bf R}\underline{$\Hom$}}}
\newcommand{\D}{\msf{D}}
\newcommand{\kk}{\mathbf{k}}
\newcommand{\del}{\partial}
\newcommand{\Mod}{\operatorname{Mod}}
\newcommand{\dGamma}{\mathbf{R}\Gamma}
\def\nc{\newcommand}
\nc{\on}{\operatorname}
\nc{\bideg}{\on{bideg}}
\nc{\xra}{\xrightarrow}
\def\phi{\varphi}
\def\th{\on{th}}
\def\D{\on{D}}
\def\Db{\D^{\on{b}}}
\nc{\into}{\hookrightarrow}
\nc{\onto}{\twoheadrightarrow}
\nc{\LL}{\mathbf{L}}
\nc{\RR}{\mathbf{R}}
\nc{\Perf}{\on{Perf}}
\nc{\nat}{\natural}
\nc{\tors}{\on{tors}}
\nc{\Tors}{\on{Tors}}
\def\Mod{\on{Mod}}
\nc{\qgr}{\on{qgr}}
\nc{\Qgr}{\on{Qgr}}
\nc{\fQgr}{\on{Qgr}^{\on{f}}}
\nc{\colim}{\on{colim}}
\def\Z{\mathbb{Z}}
\nc{\Ext}{\on{Ext}}
\nc{\om}{\omega}
\nc{\w}{\widetilde}
\nc{\PP}{\mathbb{P}}
\nc{\mf}{\on{mf}}
\nc{\OO}{\mathcal{O}}
\nc{\Proj}{\on{Proj}}
\nc{\Qcoh}{\on{Qcoh}}
\nc{\coh}{\on{coh}}
\nc{\Tor}{\on{Tor}}
\nc{\Modf}{\Mod^{\on{f}}}
\def\op{\on{op}}
\nc{\ce}{\coloneqq}
\def\k{\kk}
\nc{\Com}{\on{Com}}
\nc{\A}{\mathcal{A}}
\nc{\B}{\mathcal{B}}
\nc{\C}{\mathcal{C}}
\nc{\Sh}{\on{Sh}}
\nc{\QCoh}{\on{QCoh}}
\nc{\Coh}{\on{Coh}}
\nc{\fQCoh}{\QCoh^{\on{f}}}
\nc{\ov}{\overline}
\nc{\End}{\on{\underline{End}}}
\def\MR#1{}
\nc{\Qgrf}{\Qgr^{\on{f}}}
\nc{\uHom}{\underline{\Hom}}
\nc{\Inj}{\mathrm{Inj}}
\nc{\proj}{\mathrm{Proj}}
\nc{\spec}{\mathrm{Spec}}
\nc{\uExt}{\underline{\Ext}}
\nc{\co}{\colon}
\nc{\lcd}{\on{lcd}}
\def\A{\mathcal{A}}
\begin{document}
\title{Existence of balanced dualizing dg-modules}

\begin{abstract}
We describe cohomological conditions that are necessary and sufficient for the existence of balanced dualizing dg-modules, generalizing a theorem of Van den Bergh for balanced dualizing complexes over graded algebras. As a consequence, we show that a dg-algebra satisfying certain finiteness conditions admits a balanced dualizing dg-module if and only if its zeroth cohomology algebra admits a balanced dualizing complex. Additionally, we obtain a host of new examples of dg-algebras whose associated noncommutative spaces satisfy Serre duality.
\end{abstract}

\thanks{{\em Mathematics Subject Classification} 2020: 14F08}

\numberwithin{equation}{section}

\maketitle
\setcounter{tocdepth}{1}

\setcounter{section}{0}

\section{Introduction}

Let $\k$ be a field and $S = \bigoplus_{i \ge 0} S_i$ a Noetherian graded $\k$-algebra such that $S_0 = \k$. When $S$ is commutative, it determines a projective variety $X = \on{Proj}(S)$. In general, no such projective variety exists, but the fundamental insight of noncommutative projective geometry, originating in work of Artin-Zhang~\cite{AZ}, is that one may associate to $S$ a triangulated category $\D_{\qgr}(S)$ with many of the same homological features as the bounded derived category $\Db(X)$. More precisely, the category $\D_{\qgr}(S)$ is defined to be the quotient of $\Db(S)$ by complexes $C$ such that $\dim_\k H(C) < \infty$; when $S$ is commutative and generated in degree one, $\D_{\qgr}(S)$ is equivalent to $\Db(X)$. 

For example, Yekutieli-Zhang~\cite{Yekutieli_Zhang} have proven that a generalization of Serre duality holds for $\D_{\qgr}(S)$ when $S$ admits a balanced dualizing complex, a notion due to Yekutieli~\cite{yek92}. We recall that a dualizing complex for $S$ is, roughly speaking, a complex $R$ such that the functor $\RuHom_S( -, R)$ determines an equivalence $\Db(S) \xra{\simeq} \Db(S)^{\op}$, and a \emph{balanced} dualizing complex has an additional rigidifying property that forces it to be unique up to isomorphism. Van den Bergh subsequently characterized the algebras admitting balanced dualizing complexes~\cite[Theorem 6.3]{VdB_dualizing}, thus determining exactly when $\D_{\qgr}(S)$ enjoys Serre duality. We refer the reader to Yekutieli's textbook~\cite{yekutielibook} for a comprehensive discussion of balanced dualizing complexes and the important role they play in noncommutative algebraic geometry.

Suppose now that $A$ is a differential bigraded $\k$-algebra, i.e. a dg-algebra with both a cohomological and internal grading (see Section~\ref{sec:existence} for the precise definition, and see Notation~\ref{notation} for our indexing conventions). Assume $A$ satisfies the conditions in Setup~\ref{setup}. One may define the category $\D_{\qgr}(A)$ just as in the case of algebras above (see Section~\ref{sec:serre} for details), and the first- and third-named authors have recently begun a study of the geometric properties of the noncommutative space $\D_{\qgr}(A)$~\cite{serreduality_dgalgebras,BROWN2025110035}, i.e. a study of \emph{derived} noncommutative geometry. In particular, they introduce in~\cite{serreduality_dgalgebras} the notion of a balanced dualizing dg-module for $A$, which is the natural differential graded generalization of a balanced dualizing complex. Moreover, they prove that $\D_{\qgr}(A)$ satisfies Serre duality when~$A$ admits a balanced dualizing dg-module~\cite[Theorem 1.3]{serreduality_dgalgebras}, generalizing Yekutieli-Zhang's noncommutative Serre duality theorem. This motivates the question: under what conditions does~$A$ admit a balanced dualizing dg-module? Our main result answers this question, generalizing the aforementioned theorem of Van den Bergh~\cite[Theorem 6.3]{VdB_dualizing}: 

\begin{thm}
\label{thm:intro}
A differential bigraded $\k$-algebra $A$ as in Setup~\ref{setup} admits a balanced dualizing dg-module if and only if the graded $\k$-algebra $H^0(A)$ and its opposite algebra $H^0(A)^{\op}$ satisfy Artin-Zhang's condition~$\chi$ \cite{AZ} and have finite local cohomological dimension (Definition~\ref{def:lcd_classical}).
\end{thm}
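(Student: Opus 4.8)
The plan is to reduce the dg-algebra statement to Van den Bergh's classical theorem for graded algebras by passing through $H^0(A)$. The key structural observation is that, under the finiteness conditions in Setup~\ref{setup}, the category $\D_{\qgr}(A)$ should be closely controlled by $\D_{\qgr}(H^0(A))$—for instance via the truncation $A \to H^0(A)$ inducing a suitable functor on derived categories—so that the existence of a balanced dualizing object can be transported between the two settings. Concretely, I would first recall from \cite{serreduality_dgalgebras} the characterization of when $A$ admits a balanced dualizing dg-module in terms of two cohomological conditions: (i) a $\chi$-type condition asserting that local cohomology $\RR\Gamma_{\mathfrak m}$ commutes with direct limits / has bounded-below cohomology on finitely generated dg-modules, and (ii) finiteness of local cohomological dimension, both for $A$ and for $A^{\op}$. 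The real content is then to show that each of these holds for $A$ if and only if the corresponding condition holds for the graded algebra $H^0(A)$.

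\textbf{Step 1: Set up the comparison.} I would make precise the functor relating $\D_{\qgr}(A)$ and $\D_{\qgr}(H^0(A))$ coming from restriction and extension of scalars along $A \to H^0(A)$, and verify that it is compatible with the torsion functors $\RR\Gamma_{\mathfrak m}$ on both sides (this uses that the augmentation ideal of $A$ maps to that of $H^0(A)$, and that $A$ is, up to the grading conventions of Notation~\ref{notation}, connective with $H^0$ a finite module). The finiteness hypotheses of Setup~\ref{setup}—presumably that $H(A)$ is Noetherian and finite-dimensional in each internal degree—ensure that ``finitely generated'' dg-modules over $A$ correspond under this functor to bounded complexes with finitely generated cohomology over $H^0(A)$.

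\textbf{Step 2: Transfer condition $\chi$ and finite lcd.} Using the spectral sequence or filtration arising from the Postnikov tower of $A$ (filtering by cohomological degree, with associated graded built from shifts of $H^i(A)$-modules), I would show that $\RR\Gamma_{\mathfrak m}(M)$ over $A$ has bounded-below, finite-dimensional cohomology in each degree exactly when the analogous statement holds over $H^0(A)$ for the cohomology modules of $M$; the boundedness of $A$ (finitely many nonzero $H^i$) keeps the spectral sequence under control and prevents the dg-structure from introducing unbounded local cohomology. The same filtration argument handles finite local cohomological dimension: $\lcd_A$ is finite iff $\lcd_{H^0(A)}$ is finite, since each differs by at most the cohomological amplitude of $A$. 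Running the entire argument for $A^{\op}$ in parallel—noting $H^0(A^{\op}) = H^0(A)^{\op}$—then gives all four conditions simultaneously.

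\textbf{Step 3: Invoke Van den Bergh and conclude.} By \cite[Theorem 6.3]{VdB_dualizing}, the graded algebra $H^0(A)$ admits a balanced dualizing complex iff $H^0(A)$ and $H^0(A)^{\op}$ satisfy $\chi$ and have finite lcd; combining with Steps 1--2 and the dg-analogue of Van den Bergh's criterion from \cite{serreduality_dgalgebras} (balanced dualizing dg-module exists iff $A$, $A^{\op}$ satisfy dg-$\chi$ and finite lcd), we obtain the claimed equivalence. \textbf{The main obstacle} I anticipate is Step 2: controlling how the dg-structure interacts with the torsion functor. Unlike in the classical graded case, $\RR\Gamma_{\mathfrak m}$ on dg-modules is computed via a dg-enhanced derived functor, and one must be careful that the Postnikov/spectral-sequence comparison converges and that no pathological unbounded behavior is introduced by the higher $H^i(A)$; establishing the precise finiteness in Setup~\ref{setup} is exactly what should rule this out, but verifying convergence and the degreewise finite-dimensionality of the resulting cohomology will require the most care.
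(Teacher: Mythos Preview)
Your proposal has a genuine circularity problem. In Step~3 you invoke ``the dg-analogue of Van den Bergh's criterion from \cite{serreduality_dgalgebras} (balanced dualizing dg-module exists iff $A$, $A^{\op}$ satisfy dg-$\chi$ and finite lcd).'' No such result appears in \cite{serreduality_dgalgebras}; that paper introduces balanced dualizing dg-modules and proves Serre duality when one exists, but it does \emph{not} characterize when they exist. That characterization is precisely Theorem~\ref{thm:intro} (equivalently Theorem~\ref{thm:vdb}), so you are assuming what you want to prove.

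What the paper actually does is quite different from your outline. For the harder direction (conditions on $H^0(A)$ imply existence), one must \emph{construct} the candidate $R \coloneqq \RR\Gamma_\m(A)^*$ and verify directly that it is a balanced dualizing dg-module. This requires a substantial chain of technical results developed in Section~\ref{sec:localcoh}: a tensor formula $M \otimes_A^{\LL} \RR\Gamma_\m(A) \cong \RR\Gamma_\m(M)$ (which needs $\RR\Gamma_\m$ to commute with filtered colimits, hence the finite-lcd hypothesis); a local duality statement $\RR\Gamma_\m(M)^* \cong \RuHom_A(M, R)$; the comparison $\RR\Gamma_\m(M) \cong \RR\Gamma_{\m^{\op}}(M)$ for bimodules (Theorem~\ref{thm:op}), which is what forces both $\chi$ and finite lcd on the opposite side; and a computation that $A \cong \RuHom_A(R,R)$. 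Reflexivity of every $M \in \Db(A)$ is then checked via an explicit string of isomorphisms using these ingredients. Your transfer-of-$\chi$ idea in Step~2 is correct and is Theorem~\ref{thm:chi} in the paper, but it is only one input; there is no shortcut via a pre-existing dg criterion, and the $\D_{\qgr}$ comparison you sketch in Step~1 plays no role.
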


See Theorem~\ref{thm:vdb} for a more precise version of Theorem~\ref{thm:intro}. It follows that Serre duality holds for $\D_{\qgr}(A)$ when the equivalent conditions in Theorem~\ref{thm:intro} are satisfied: see Corollary~\ref{cor:serre}. As an additional consequence, we prove the following (see \Cref{cor:A0} for a more precise statement):

\begin{cor}
\label{cor:intro}
A differential bigraded $\k$-algebra $A$ as in Setup~\ref{setup} admits a balanced dualizing dg-module if and only if $H^0(A)$ admits a balanced dualizing complex.
\end{cor}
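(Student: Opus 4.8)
The plan is to deduce Corollary~\ref{cor:intro} from Theorem~\ref{thm:intro} (in its precise form, Theorem~\ref{thm:vdb}) together with Van den Bergh's theorem \cite[Theorem 6.3]{VdB_dualizing}. By Theorem~\ref{thm:intro}, $A$ admits a balanced dualizing dg-module if and only if $H^0(A)$ and $H^0(A)^{\op}$ both satisfy condition $\chi$ and have finite local cohomological dimension. On the other hand, Van den Bergh's theorem says precisely that a Noetherian connected graded $\k$-algebra $S$ admits a balanced dualizing complex if and only if $S$ and $S^{\op}$ satisfy $\chi$ and have finite local cohomological dimension. So the whole statement reduces to checking that $H^0(A)$ is a Noetherian connected graded $\k$-algebra, i.e. that the hypotheses of Van den Bergh's theorem apply to $S = H^0(A)$.

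First I would record the consequences of Setup~\ref{setup} for $H^0(A)$: the finiteness conditions imposed on $A$ there should guarantee that $H^0(A)$ is a Noetherian graded $\k$-algebra with $H^0(A)_0 = \k$ (connectedness), so that Van den Bergh's characterization is literally applicable. This is essentially a bookkeeping step, tracing through what Setup~\ref{setup} says about $H(A)$ and hence about its degree-zero cohomology. Next I would invoke Van den Bergh's theorem to rewrite "$H^0(A)$ admits a balanced dualizing complex" as "$H^0(A)$ and $H^0(A)^{\op}$ satisfy $\chi$ and have finite local cohomological dimension." Finally, I would quote Theorem~\ref{thm:intro} to identify this last condition with "$A$ admits a balanced dualizing dg-module," completing the chain of equivalences. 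One should also note that $\big(H^0(A)\big)^{\op} = H^0(A^{\op})$, so the opposite-algebra hypotheses match up on both sides.

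The only genuine content beyond citation-chaining is the verification that Setup~\ref{setup} forces $H^0(A)$ to be Noetherian and connected graded; I expect this to be the main (and only) obstacle, and it should be immediate or nearly so once Setup~\ref{setup} is unwound—Noetherianity of $H(A)$, or of $A$ in an appropriate sense, typically passes to $H^0(A)$ as a graded subquotient, and the $\k$-linear grading conventions give $H^0(A)_0 = \k$. If Setup~\ref{setup} only guarantees Noetherianity of $H(A)$ as a bigraded ring rather than of $H^0(A)$ directly, a short argument is needed to descend to the zeroth cohomological slice; I would handle that by observing $H^0(A)$ is the degree-zero part of $H(A)$ in the cohomological grading, which is a graded-ring retract, hence Noetherian. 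With that in hand the corollary follows formally.
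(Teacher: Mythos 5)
Your argument is correct and is essentially the paper's own proof (see Corollary~\ref{cor:A0}(1)): combine Theorem~\ref{thm:vdb} with Van den Bergh's criterion \cite[Theorem 6.3]{VdB_dualizing} after checking that $H^0(A)$ is connected graded Noetherian. The small uncertainty you flag is not an issue: Setup~\ref{setup} explicitly hypothesizes that $H^0(A)$ is Noetherian, and connectedness of $A$ (namely $A_0=A_0^0=\k$ and $A_i^j=0$ for $i<0$ or $j>0$) immediately gives $H^0(A)_0=\k$ and $H^0(A)_i=0$ for $i<0$, so there is nothing to descend from $H(A)$.
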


Corollary~\ref{cor:intro} provides an ample source of examples of dg-algebras with balanced dualizing dg-modules: see Example~\ref{ex:explicit}. Theorem~\ref{thm:intro} and Corollary~\ref{cor:intro} underscore how cleanly the theory of balanced dualizing complexes extends to the differential graded setting, building on the evidence already provided by \cite{serreduality_dgalgebras,BROWN2025110035} that the theory of noncommutative projective geometry generalizes quite robustly to dg-algebras. 

\medskip
As an application of Theorem~\ref{thm:intro}, we show that, under certain conditions, the category $\Perf_{\qgr}(A)$ of perfect complexes in $\D_{\qgr}(A)$ (which we define in Section~\ref{sec:serre}) admits a Serre functor:

\begin{cor}
\label{cor:serrefunctor}
Suppose $A$ is Gorenstein (Definition~\ref{def:gorenstein}) and that $A$ has finite injective dimension as a right dg-$A$-module, in the sense of \cite[Definition 2.1]{finitistic_dimensions}. Choose $a, n \in \Z$ such that we have $\RuHom_A(\k, A) \cong \k(a)[-n]$ in $\D(A)$. If $H^0(A)$ admits a balanced dualizing complex, then the functor $S \co \Perf_{\qgr}(A) \to \Perf_{\qgr}(A)$ given by $S(\mathcal{F}) = \mathcal{F}(-a)[n]$ is a Serre functor.
\end{cor}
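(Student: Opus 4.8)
The plan is to combine Theorem~\ref{thm:intro} with the Serre duality statement for $\D_{\qgr}(A)$ from \cite{serreduality_dgalgebras} and then identify the balanced dualizing dg-module explicitly under the Gorenstein hypothesis. First, since $H^0(A)$ admits a balanced dualizing complex, Theorem~\ref{thm:intro} (or rather its precise form, Theorem~\ref{thm:vdb}) guarantees that $A$ admits a balanced dualizing dg-module $R$. By \cite[Theorem 1.3]{serreduality_dgalgebras}, the existence of $R$ implies that $\D_{\qgr}(A)$ satisfies Serre duality; concretely, this produces a duality functor on $\D_{\qgr}(A)$ built from $\RuHom_{A}(-, R)$, and restricting to the compact objects $\Perf_{\qgr}(A)$ yields a Serre functor $S$ expressed in terms of (the image in $\D_{\qgr}(A)$ of) $R$. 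So the content of the corollary is to show that, under the stated Gorenstein and finite-injective-dimension hypotheses, this abstract Serre functor agrees with the explicit twist-and-shift $\mathcal{F} \mapsto \mathcal{F}(-a)[n]$.

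The key step is therefore to pin down $R$. When $A$ is Gorenstein and has finite injective dimension as a right dg-module, $A$ itself (up to an internal twist and cohomological shift) should serve as a dualizing dg-module: the finiteness of injective dimension makes $\RuHom_A(-,A)$ a duality on the appropriate bounded derived category, and the Gorenstein condition---which by Definition~\ref{def:gorenstein} says $\RuHom_A(\k,A) \cong \k(a)[-n]$ for some $a,n$---identifies the "local cohomology at the irrelevant ideal" of $A$ with a shift of the graded $\k$-dual, which is exactly the balancedness condition. Thus I would argue that $R \simeq A(a)[-n]$ (or its appropriate Matlis-type dual incarnation, depending on the normalization in \cite{serreduality_dgalgebras}) is the balanced dualizing dg-module. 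This is the analogue of the classical fact that an AS-Gorenstein algebra has balanced dualizing complex a shift of itself, and the numerics $(a,n)$ read off from $\RuHom_A(\k,A)$ are precisely the Gorenstein parameters.

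Once $R$ is identified, the Serre functor on $\Perf_{\qgr}(A)$ coming from \cite[Theorem 1.3]{serreduality_dgalgebras} is $\mathcal{F} \mapsto \RuHom_{\qgr}(\mathcal{F}, \w{R})$ where $\w{R}$ is the image of $R$ in $\D_{\qgr}(A)$; substituting $R \simeq A(a)[-n]$ and using that $A$ maps to the unit object $\OO \ce \w{A}$ in $\D_{\qgr}(A)$, together with $\RuHom_{\qgr}(\mathcal{F}, \OO) \simeq \mathcal{F}$ for $\mathcal{F}$ perfect (perfect complexes are reflexive/self-dual against the structure sheaf), one computes the Serre functor to be $\mathcal{F} \mapsto \mathcal{F} \otimes \OO(-a)[n] \simeq \mathcal{F}(-a)[n]$, matching the asserted formula. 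The sign/direction of the twist should be double-checked against the indexing conventions of Notation~\ref{notation}.

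I expect the main obstacle to be the precise identification of the balanced dualizing dg-module as $A(a)[-n]$ in the dg setting, i.e.\ verifying that the Gorenstein hypothesis plus finite injective dimension really do imply the balancedness condition (the comparison map $\RR\Gamma_{\m}(A) \to$ shifted $\k$-dual being an isomorphism), since in the differential bigraded world one must be careful that the local cohomology computation and the duality functor interact correctly with the cohomological grading---this is where the finite injective dimension hypothesis from \cite[Definition 2.1]{finitistic_dimensions} does the work, and invoking it correctly is the crux. The reduction to $\Perf_{\qgr}$ and the final twist computation are then formal, modulo bookkeeping of shifts.
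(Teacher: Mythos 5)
Your high-level outline is right — invoke Theorem~\ref{thm:vdb} (via Example~\ref{ex:gor}) to get $R$, identify it as a twist-and-shift of $A$, and then read off the Serre functor from Corollary~\ref{cor:serre} — but the crucial technical step is missing, and a subsidiary step is wrong.

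The gap you don't address: the isomorphism $R \cong A(-a)[n]$ (your sign is reversed, but you flag that) produced in Example~\ref{ex:gor} holds only in $\D(A)$, \emph{not} as dg-$A$-$A$-bimodules. The paper explicitly cautions (end of Example~\ref{ex:gor}) that the bimodule-level identification can fail even when $A=A^0$; this is the classical fact that for an AS-Gorenstein algebra the balanced dualizing complex is a shift of $A$ twisted by a possibly nontrivial automorphism. Your argument substitutes $R$ by $A(a)[-n]$ inside a $\RuHom$ computation that a priori uses the bimodule structure, so the substitution is not licensed. The paper circumvents this by a ``check on generators'' trick: it first establishes that $\D_{\qgr}(A)$ is proper (Proposition~\ref{prop:proper}(1) plus Theorem~\ref{thm:chi} — a prerequisite for Serre functors that you also omit), and that $\{\OO(i)\}_{i\in\Z}$ classically generates $\Perf_{\qgr}(A)$ (Proposition~\ref{prop:proper}(2)); it then defines, for fixed $N$, the two functors $F(\widetilde{M})=\Hom(\widetilde M,\widetilde N)$ and $G(\widetilde M)=\Hom(\widetilde N,\widetilde M(-a)[n])$, verifies $F(\OO(i))\cong G(\OO(i))$ naturally using Corollary~\ref{cor:serre} and the one-sided identification of $R$ (which suffices when the test object is a twist of $\OO$), and concludes $F\cong G$ by generation. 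You would need this device, or an equivalent one, to make your sketch rigorous.

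Separately, your final computation is incorrect: $\RuHom_{\qgr}(\mathcal{F},\OO)$ is the dual $\mathcal{F}^\vee$, not $\mathcal{F}$; ``perfect complexes are reflexive'' means $(\mathcal{F}^\vee)^\vee\simeq\mathcal{F}$, not that $\mathcal{F}$ is self-dual. As written this collapses the duality direction and would not yield the covariant functor $\mathcal{F}\mapsto\mathcal{F}(-a)[n]$. This is more than a bookkeeping issue — the passage from the contravariant $\RuHom(-,\widetilde R)$ to a covariant Serre functor requires another dualization, and that step is exactly where the generation argument in the paper's proof does the actual work.
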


\subsection*{Overview of the paper} Section~\ref{sec:derived} contains background on derived categories of differential bigraded algebras. Section~\ref{sec:localcoh} is devoted to establishing a number of technical results on local cohomology of differential bigraded modules; in particular, we provide a detailed study of condition~$\chi$ for dg-algebras. 
In Section~\ref{sec:balanced}, we prove our main result, Theorem~\ref{thm:vdb}, and we discuss several consequences and examples. In Section~\ref{sec:serre}, we discuss the implications of our results for Serre duality over $\D_{\qgr}(A)$; in particular, we prove Corollary~\ref{cor:serrefunctor}. 







\begin{notation}
\label{notation}
Throughout, $\k$ denotes a field. We will consider bigraded $\kk$-vector spaces $V = \bigoplus_{i, j \in \Z} V_i^j$, where the superscript denotes a cohomological grading, and the subscript refers to an internal grading. 
Given $v \in V^j_i$, we write $\deg(v) = i$ and $|v| = j$. 
We will denote the $m^{\th}$ shift of $V$ in internal (resp. cohomological) degree by $V(m)$ (resp. $V[m]$). That is, $V(m)_i^j = V_{i + m}^j$, and $V[m]_i^j = V_i^{j + m}$. Given a complex $C$, its \emph{amplitude} is
$$
\amp(C) \ce \sup\{j - \ell \text{ : } H^j(C) \ne 0 \text{ and } H^\ell(C) \ne 0\}.
$$
We define $\inf(C) \ce \inf\{j \text{ : } H^j(C) \ne 0\}$, and $\sup(C)$ is defined similarly. 
\end{notation}

\subsection*{Acknowledgments} The first author was partially supported by NSF grant DMS-2302373.
\section{Existence of balanced dualizing dg-modules}
\label{sec:existence}

Let $A$ be a \emph{differential bigraded $\kk$-algebra}, i.e., a $\k$-algebra equipped with a bigrading $A = \bigoplus_{(i, j) \in \Z^2} A_i^j$ and a degree $(0, 1)$
$\kk$-linear map $\del_A$ that squares to 0 and satisfies the Leibniz rule:
$
\del_A(xy) = \del_A(x)y + (-1)^{|x|}x\del_A(y).
$
We refer to \cite[Definition 2.2]{BROWN2025110035} for the definitions of left and right differential bigraded $A$-modules. Throughout the paper, we abbreviate ``differential bigraded $\k$-algebra" (resp. ``differential bigraded $A$-module")  to ``dg-algebra" (resp. ``dg-$A$-module"). All dg-$A$-modules are assumed to be right modules unless otherwise noted. 
Let $A^{\op}$ denote the \emph{opposite dg-algebra of $A$} \cite[Definition 2.1]{BROWN2025110035}. If $A$ and $B$ are differential bigraded $\k$-algebras, a  \emph{dg-$A$-$B$-bimodule} is a right $A^{\op} \otimes_k B$ module. 
Given a differential bigraded $A$-module~$M$, we set $M_i \coloneqq \bigoplus_{j \in \Z} M_i^j$ and $M^j \coloneqq \bigoplus_{i \in \Z} M^j_i$. We say $A$ is \emph{connected} if 
$A_0 = A_0^0 = \k$,
and $A_i^j = 0$ when $i<0$ or $j > 0$. If $A$ is connected, then $A_0 = \k$ is a dg-$A$-module.

\medskip
In this section, we work under the following setup, which is the same as~\cite[Setup 2.8]{BROWN2025110035}.

\begin{setup}
\label{setup}
Let $A$ be a connected differential bigraded $\k$-algebra such that $H^0(A)$ is a Noetherian ring, and the total cohomology algebra $H(A)$ is finitely generated as an $H^0(A)$-module. 
\end{setup}

\subsection{Derived categories of differential bigraded algebras}
\label{sec:derived}

Let $\D(A)$ (resp. $\Db(A)$) denote the (resp. bounded) derived category of $A$, as defined in \cite[Section 2.2]{BROWN2025110035}. The objects of $\D(A)$ are  dg-$A$-modules, and the objects of $\Db(A)$ are dg-$A$-modules $M$ such that $H(M)$ is finitely generated over $H(A)$. 
Restriction of scalars along the dg-algebra morphism $A \to H^0(A)$ induces a functor $\Db(H^0(A)) \to \Db(A)$.

\begin{prop}
\label{prop:gen}
The category $\Db(A)$ is classically generated by finitely generated $H^0(A)$-modules. 
\end{prop}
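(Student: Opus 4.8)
The plan is to show that the subcategory $\mathcal{T} \subseteq \Db(A)$ classically generated by finitely generated $H^0(A)$-modules is all of $\Db(A)$. Since $\Db(A)$ is generated by its objects, it suffices to show every $M \in \Db(A)$ lies in $\mathcal{T}$. The key observation is that $\mathcal{T}$ is a thick subcategory, so it is closed under shifts, cones, and direct summands; thus I only need to build an arbitrary $M$ from finitely generated $H^0(A)$-modules using finitely many such operations.

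First I would reduce to the case where $H(M)$ is concentrated in a single internal/cohomological degree, or more precisely, induct on the amplitude of $M$. Given $M \in \Db(A)$, the total cohomology $H(M)$ is finitely generated over $H(A)$, hence (by Setup~\ref{setup}, since $H(A)$ is module-finite over the Noetherian ring $H^0(A)$) also finitely generated over $H^0(A)$. Let $n = \sup(M)$ and consider the truncation: there is a triangle relating $M$ to its top cohomology $H^n(M)$ (viewed as a dg-module via $A \to H^0(A)$, placed in cohomological degree $n$) and a dg-module $M'$ with $\sup(M') < n$. Concretely, the natural map $M \to \tau^{\ge n} M \simeq H^n(M)[-n]$ fits into a triangle $M' \to M \to H^n(M)[-n] \to M'[1]$ with $H^i(M') = H^i(M)$ for $i < n$ and zero otherwise, so $\amp(M') < \amp(M)$. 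By induction on amplitude, $M' \in \mathcal{T}$, and since $H^n(M)$ is a finitely generated $H^0(A)$-module, $H^n(M)[-n] \in \mathcal{T}$; as $\mathcal{T}$ is triangulated, $M \in \mathcal{T}$. The base case of the induction is $M$ with cohomology in a single degree, i.e. (up to shift) $M \simeq H^0(M)$ with $H^0(M)$ a finitely generated $H^0(A)$-module, which is a generator by hypothesis.

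The main technical point — and the step I expect to require the most care — is justifying that the truncation functor behaves well here: namely that for $M \in \Db(A)$ with $\sup(M) = n$, the soft truncation $\tau^{\ge n}M$ is quasi-isomorphic to $H^n(M)$ placed in degree $n$ \emph{as a dg-$A$-module}, and that $H^n(M)$ is genuinely an $H^0(A)$-module with its action induced by restriction along $A \to H^0(A)$. This is where connectedness and the structure of $A$ matter: one must check that the $A$-action on the top cohomology factors through $H^0(A)$ (the positive-cohomological-degree part of $A$ acts as zero on $H^n(M)$ for degree reasons, since $A^{j} = 0$ for $j > 0$ by connectedness), and that the truncation triangle can be realized within the category of dg-$A$-modules rather than merely in a derived sense. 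One should also confirm the induction terminates, which follows since $\amp(M) < \infty$ for $M \in \Db(A)$ and each step strictly decreases it. I would cite the analogous argument in \cite{BROWN2025110035} if available, or spell out the truncation construction for dg-modules directly.
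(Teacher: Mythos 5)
Your proof is correct and takes essentially the same approach as the paper: induction on cohomological amplitude, peeling off a single cohomology group (a finitely generated $H^0(A)$-module, since $A^{>0}=0$) via a truncation triangle realized at the level of dg-$A$-modules. The only cosmetic difference is that you truncate at $\sup(M)$ while the paper truncates at $\inf(M)$ using the smart truncation functor $\sigma^{\le m}$ of \cite[Remark 2.7]{BROWN2025110035}, which is precisely the construction you anticipate needing to spell out.
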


\begin{proof}
Let $M \in \Db(A)$. We argue by induction on $\amp(M)$. If $\amp(M) = 0$, then $M$ is (a cohomological shift of) a finitely generated $H^0(A)$-module. Suppose $\amp(M) > 0$, let $m \ce \inf(M)$, and let $\sigma^{\le m}$ denote the smart truncation functor, as defined in \cite[Remark 2.7]{BROWN2025110035}. The short exact sequence $0 \to \sigma^{\le m}M \to M \to M / \sigma^{\le m}M \to 0$ and induction imply the result. 
\end{proof}

Let $M$ (resp. $N$) be a right (resp. left) dg-$A$-module. We define the tensor product $M \otimes_A N$ and the derived tensor product $M \otimes_A^\LL N$  as in \cite[Section 2]{BROWN2025110035}. Similarly, if $M$ and $N$ are right dg-$A$-modules, then we define the Hom space $\Hom_A(M, N)$ and internal Hom dg-$A$-module $\uHom_A(M, N)$, as well as their derived variants $\RHom_A(M, N)$ and $\RuHom_A(M, N)$, as in \cite[Section 2]{BROWN2025110035}. If $M$ (resp. $N$) is a dg-$A$-$A$-bimodule, then $\uHom_A(M, N)$ and $\RuHom_A(M, N)$ are right (resp. left) dg-$A$-modules. We write $\Ext^j_A(M, N) \ce H^j\RuHom_A(M, N)_0$ and $\uExt^j_A(M, N) \ce H^j\RuHom_A(M, N)$. 

\begin{prop}
\label{prop:extfinite}
Let $M, N \in \Db(A)$. We have:
\begin{enumerate}
\item $\dim_\k \uExt_A^*(M, N)_i < \infty$ for all $i \in \Z$. 
\item $\uExt_A^*(M, N)_i = 0$ for $i \ll 0$. 
\end{enumerate}
\end{prop}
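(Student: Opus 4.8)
The plan is to reduce to the case where $M$ and $N$ are both finitely generated $H^0(A)$-modules, and then exploit the corresponding finiteness statement over the Noetherian ring $H^0(A)$.

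First I would reduce the problem in the $M$-variable. By Proposition~\ref{prop:gen}, the object $M$ lies in the thick subcategory of $\Db(A)$ classically generated by the finitely generated $H^0(A)$-modules. Both $\uExt_A^*(-, N)_i$ (for fixed $i$) being finite-dimensional, and $\uExt_A^*(-,N)_i$ vanishing for $i \ll 0$, are conditions preserved under taking cones, retracts, and cohomological shifts — the long exact sequence in $\uExt$ together with the fact that a finite set of $H^0(A)$-modules gives a uniform lower bound on internal degrees handles this. (For the vanishing statement one needs that finitely many generators are each bounded below in internal degree, and that cones/shifts only finitely perturb these bounds.) So it suffices to treat the case $M = M'$ a finitely generated $H^0(A)$-module, viewed in $\Db(A)$ via restriction of scalars along $A \to H^0(A)$. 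Next I would do the same reduction in the $N$-variable: again by Proposition~\ref{prop:gen} and the same stability properties of the two conditions, now applied to $\uExt_A^*(M', -)_i$, it suffices to assume $N = N'$ is also a finitely generated $H^0(A)$-module.

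Now both arguments are finitely generated modules over $H^0(A)$. The key computational input is an adjunction (change-of-rings) isomorphism: since $M' = M' \otimes^{\LL}_{H^0(A)} H^0(A)$ and $A \to H^0(A)$ is a dg-algebra map, one has
\[
\RuHom_A(M', N') \;\simeq\; \RuHom_{H^0(A)}\!\big(M', \RuHom_A(H^0(A), N')\big)
\]
in $\D(H^0(A))$. I would then analyze $P \ce \RuHom_A(H^0(A), N')$: because $H(A)$ is finitely generated over the Noetherian ring $H^0(A)$ and $N'$ is a finitely generated $H^0(A)$-module, $P$ has cohomology that is finitely generated over $H^0(A)$ in each internal degree and bounded below in internal degree — here the connectedness of $A$ (so $A_i^j=0$ for $i<0$) forces a lower bound, and Noetherianity of $H^0(A)$ together with finite generation of $H(A)$ controls the finite-dimensionality in each degree. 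Finally, $\RuHom_{H^0(A)}(M', P)$ with $M'$ finitely generated over $H^0(A)$: taking a resolution of $M'$ by finitely generated free graded $H^0(A)$-modules and applying $\uHom_{H^0(A)}(-, P)$ degreewise, each internal-degree piece is built from finitely many internal-degree pieces of $P$, which gives (1), and the lower bound on internal degrees of $P$ propagates to give (2).

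The main obstacle I expect is bookkeeping the internal grading carefully through the change-of-rings spectral sequence (or the iterated $\RuHom$), in particular verifying that $\RuHom_A(H^0(A), N')$ really does have each graded piece finite-dimensional and is uniformly bounded below — this is where Setup~\ref{setup} (Noetherian $H^0(A)$, $H(A)$ module-finite over it) and connectedness are essential, and one has to make sure that passing to a (possibly infinite, in the cohomological direction) resolution does not destroy the degreewise finiteness. An alternative route that sidesteps part of this is to induct on amplitude of $M$ and $N$ directly using smart truncations as in the proof of Proposition~\ref{prop:gen}, reducing to the amplitude-zero case; I would keep that in reserve in case the change-of-rings approach gets too heavy.
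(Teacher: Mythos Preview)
Your reduction via Proposition~\ref{prop:gen} to the case where $M$ and $N$ are finitely generated $H^0(A)$-modules is correct, and the change-of-rings isomorphism $\RuHom_A(M',N') \simeq \RuHom_{H^0(A)}(M', P)$ with $P = \RuHom_A(H^0(A), N')$ is valid. The gap is at the next step: your claim about $P$ --- that each $H^j(P)_i$ is finite-dimensional and vanishes for $i \ll 0$ --- is precisely the statement of the proposition in the special case $M = H^0(A)$. The adjunction has therefore not reduced the difficulty; it has only traded a general $M$ for the single object $H^0(A)$, which is no easier to resolve over $A$. The justification you offer (``$H(A)$ finitely generated over $H^0(A)$, connectedness of $A$'') is not an argument: there is no spectral sequence from $\Ext_{H(A)}$ to $\Ext_A$ in this generality, and connectedness alone does not bound the internal degrees appearing in $\uHom_A(F, N')$ unless you already know that a semifree resolution $F$ of $H^0(A)$ has only finitely many generators in each cohomological degree. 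Your ``alternative route'' (induct on amplitude via smart truncations) is exactly the reduction you have already performed and lands at the same unresolved base case.

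The paper's proof is precisely the missing ingredient, applied directly: \cite[Proposition~2.16]{BROWN2025110035} constructs for any $M \in \Db(A)$ a minimal semifree resolution $F$ with $F^j = 0$ for $j \gg 0$ and each brutal truncation $F^{\ge j}$ perfect, so that $F$ has only finitely many free generators in each cohomological degree, with internal degrees bounded below. Replacing $N$ by a cohomologically bounded representative and reading off $\uHom_A(F,N)$ then gives both (1) and (2) immediately. You would need exactly this resolution (for $H^0(A)$) to establish the properties of $P$; once you have it, applying it to the original $M$ is shorter than the detour through the adjunction.
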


\begin{proof}
Both statements follow by computing $\uExt_A(M,N)$ using a semifree resolution of $M$ as constructed in the proof of \cite[Proposition 2.16]{BROWN2025110035}.
\end{proof}

Given a dg-$A$-module $V$, we set $V^* \ce\uHom_\k(V, \k)$. The functor $\D(A) \to \D(A^{\op})^{\op}$ given by $M \mapsto M^*$ interchanges $K$-projective and $K$-injective dg-modules (see \cite[Section 2.1]{BROWN2025110035} for background on these objects). Let $\D^{\mathrm{lf}}(A)$ denote the full subcategory of $\D(A)$ given by objects $M$ that are locally finite, i.e. such that $\dim_\k H^j(M)_i < \infty$ for all $i, j \in \Z$. The following result is standard and follows from the existence of $K$-projective and $K$-injective resolutions.


\begin{lem}
\label{lem:matlis}
    The functor $\D^{\mathrm{lf}}(A) \to \D^{\mathrm{lf}}(A^{\op})^{\op}$ given by $M \mapsto M^*$ is an equivalence. Moreover, for all $M,N\in \D^{\mathrm{lf}}(A)$, we have a quasi-isomorphism $\RuHom_A(M, N)\simeq \RuHom_{A^{\op}}(N^*,M^*)$.
\end{lem}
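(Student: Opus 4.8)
The plan is to extract both assertions from two elementary properties of the $\k$-linear dual $(-)^* = \uHom_\k(-,\k)$: it is exact, since $\k$ is injective over itself; and the evaluation map $V \to V^{**}$ is an isomorphism of bigraded $\k$-vector spaces whenever $V$ is locally finite, because $(V^{**})^j_i \cong (V^j_i)^{**}$ and each $V^j_i$ is then finite-dimensional. Exactness shows that the functor $(-)^* \co \D(A) \to \D(A^{\op})^{\op}$ recalled before the statement preserves quasi-isomorphisms, and the formula $(V^*)^j_i \cong \Hom_\k(V^{-j}_{-i}, \k)$ shows it carries $\D^{\mathrm{lf}}(A)$ into $\D^{\mathrm{lf}}(A^{\op})^{\op}$ (and symmetrically with the roles of $A$ and $A^{\op}$ exchanged). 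Applying the second property degreewise shows that for every $M \in \D^{\mathrm{lf}}(A)$ the evaluation map $M \to M^{**}$ is an \emph{isomorphism} of dg-$A$-modules, evidently natural in $M$.

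For the equivalence, I would then just observe that the functors given by $(-)^*$ in the two directions, $\D^{\mathrm{lf}}(A) \to \D^{\mathrm{lf}}(A^{\op})^{\op}$ and $\D^{\mathrm{lf}}(A^{\op})^{\op} \to \D^{\mathrm{lf}}(A)$, are mutually quasi-inverse, since both composites are the double-dual functor $(-)^{**}$, which is naturally isomorphic to the identity by the previous paragraph. In particular there is no need to check full faithfulness separately.

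The substantive point is the quasi-isomorphism $\RuHom_A(M, N) \simeq \RuHom_{A^{\op}}(N^*, M^*)$. I would first establish a chain-level statement: for $N$ locally finite and $P$ an arbitrary right dg-$A$-module, there is a natural isomorphism $\uHom_A(P, N) \cong \uHom_{A^{\op}}(N^*, P^*)$. Using $N \cong N^{**} = \uHom_\k(N^*, \k)$ and the tensor--hom adjunction over $\k$, the left-hand side is naturally isomorphic to $(P \otimes_A N^*)^*$, where $N^*$ is viewed as a left dg-$A$-module; the same adjunction identifies the right-hand side with $(N^* \otimes_{A^{\op}} P)^*$; and the standard twist isomorphism $N^* \otimes_{A^{\op}} P \cong P \otimes_A N^*$ (for $P$ viewed as a left dg-$A^{\op}$-module and $N^*$ as a right dg-$A^{\op}$-module) matches the two. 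The only thing requiring care here --- and the main obstacle, such as it is --- is keeping track of the left/right and $A$-versus-$A^{\op}$ module structures and of the Koszul signs in the twist isomorphism; there is no conceptual difficulty. To finish, I would take a $K$-projective resolution $P \iso M$ over $A$; by the interchange property recalled before the statement, $M^* \iso P^*$ is then a $K$-injective resolution over $A^{\op}$, so
\[
\RuHom_A(M, N) \simeq \uHom_A(P, N) \cong \uHom_{A^{\op}}(N^*, P^*) \simeq \RuHom_{A^{\op}}(N^*, M^*),
\]
which is the desired quasi-isomorphism.
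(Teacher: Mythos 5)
The paper offers no actual proof of this lemma beyond the sentence preceding it, which simply notes that the statement is standard and follows from the existence of $K$-projective and $K$-injective resolutions; your argument supplies precisely the standard details that this remark gestures at, using the same ingredients (exactness of the $\k$-linear dual, biduality, the tensor--hom adjunction over $\k$, and the interchange of $K$-projectives and $K$-injectives under $(-)^*$).

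One point needs correction. An object $M$ of $\D^{\mathrm{lf}}(A)$ has locally finite \emph{cohomology}, but $M$ itself need not be locally finite as a bigraded vector space, so the evaluation map $M \to M^{**}$ is in general only a quasi-isomorphism, not an isomorphism, of dg-modules; exactness of $(-)^*$ identifies $H(M^{**})$ with $H(M)^{**}$ and the induced map with evaluation for $H(M)$, which is an isomorphism since $H(M)$ is locally finite. The same care is needed in your chain-level identification $\uHom_A(P,N) \cong (P\otimes_A N^*)^*$: since $N$ need not be literally locally finite, replace the isomorphism $N \cong N^{**}$ by the quasi-isomorphism $N \to N^{**}$ and use that $\uHom_A(P,-)$ preserves quasi-isomorphisms (as $P$ is $K$-projective) to obtain a quasi-isomorphism $\uHom_A(P,N) \simeq \uHom_A(P,N^{**}) \cong (P\otimes_A N^*)^*$; similarly on the $A^{\op}$ side. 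The middle link in your final chain then becomes a quasi-isomorphism rather than an isomorphism, which is all that is needed for the conclusion.
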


We recall what it means for $A$ to be Gorenstein:

\begin{defn}\label{def:gorenstein}
We say the differential bigraded $\k$-algebra $A$ is \emph{Gorenstein} if:
\begin{enumerate}
\item The functors $\RuHom_A( - , A)$ and $\RuHom_{A^{\op}}( - , A)$ map $\Db(A)$ to $\Db(A^{\op})$ and $\Db(A^{\op})$ to $\Db(A)$, respectively. 
\item Given $M \in \Db(A)$ and $N \in \Db(A^{\op})$, the canonical maps
$$
M \to \RuHom_{A^{\op}}(\RuHom_A(M, A), A) \quad \text{and} \quad N \to \RuHom_{A}(\RuHom_{A^{\op}}(N, A), A)
$$
are isomorphisms in $\D(A)$ and $\D(A^{\op})$, respectively.
\item There is an isomorphism $\RuHom_A(\k, A) \cong \k(a)[-n]$ in $\D(A)$ for some $a, n \in \Z$. 
\end{enumerate}
\end{defn}

There are additional definitions of Gorenstein dg-algebras in the literature: see \cite[Remark 2.19]{BROWN2025110035} for a detailed discussion. We refer to \cite[Examples 2.23---2.27]{BROWN2025110035} for several examples of Gorenstein dg-algebras.

\subsection{Local cohomology of differential bigraded modules}
\label{sec:localcoh}
Let $\m$ denote the ideal $A_{\ge 1}$ in $A$. Given a dg-$A$-module $M$, its \emph{derived $\m$-torsion} is defined to be
$$
\RR\Gamma_{\m}(M) \ce \underset{d \to \infty}{\colim}\text{ } \RuHom_A(A / A_{\ge d}, M),
$$
and its $i^{\th}$ \emph{local cohomology} is $H^i_\m(M) \ce H^i \RR\Gamma_\m(M)$ \cite[Section 3.1]{serreduality_dgalgebras}. In this subsection, we establish several technical results about derived $\m$-torsion of dg-$A$-modules. 
The first is an extension of a result of Van den Bergh~\cite[Lemma 4.3]{VdB_dualizing}; before stating it, we recall the definition of local cohomological dimension for graded algebras:

\begin{defn}
\label{def:lcd_classical}
Let $S$ be a connected $\k$-algebra. The \emph{local cohomological dimension of $S$} is
$$
\lcd(S) \ce \inf\{d \text{ : }  H^i_\m(M)=0 \text{ for all $i> d$ and all graded $S$-modules $M$}\}.
$$
\end{defn}

\begin{lemma}\label{lem:colimits}
   If $\lcd(H^0(A)) < \infty$, then  $\RR\Gamma_\m \co \D(A) \to \D(A)$ commutes with filtered colimits.
\end{lemma}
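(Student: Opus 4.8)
The plan is to reduce the statement about derived $\m$-torsion over the dg-algebra $A$ to the analogous (and known) statement over the ordinary Noetherian ring $H^0(A)$, where finiteness of local cohomological dimension gives exactly what is needed. First I would observe that since $\RR\Gamma_\m(M) = \colim_d \RuHom_A(A/A_{\ge d}, M)$ is itself a filtered colimit, and filtered colimits of $\k$-modules commute with one another, it suffices to show that each functor $\RuHom_A(A/A_{\ge d}, -)$ commutes with filtered colimits — equivalently, that each $A/A_{\ge d}$ is a ``small'' (compact) object in a suitable sense, i.e.\ admits a semifree resolution that is finitely generated in each cohomological degree, or more precisely that $\RuHom_A(A/A_{\ge d}, -)$ has bounded-above cohomological amplitude uniformly. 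The key numerical input is that $A/A_{\ge d}$, as a dg-$A$-module, has cohomology finitely generated over $H(A)$ (hence lies in $\Db(A)$ once we also control the cohomological spread), and that the hypothesis $\lcd(H^0(A)) < \infty$ bounds how far the relevant $\Ext$'s can be nonzero.

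In more detail: I would first reduce, via \Cref{prop:gen}, to checking the colimit-commutation on a classical generator, and more usefully I would work at the level of $\RR\Gamma_\m$ applied to an arbitrary dg-module $M$ written as $M = \colim M_\alpha$. Using the standard fact that $\RuHom_A(P, -)$ commutes with all colimits when $P$ is a bounded complex of finitely generated modules admitting a finite semifree resolution, the crux becomes: for each $d$, does $A/A_{\ge d}$ admit such a resolution over $A$? By \cite[Proposition 2.16]{BROWN2025110035} (invoked already in the proof of \Cref{prop:extfinite}) one builds a semifree resolution of $A/A_{\ge d}$ with finitely generated free modules in each internal degree; the issue is the cohomological direction. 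Here is where $\lcd(H^0(A)) < \infty$ enters: it ensures that $\RR\Gamma_\m$, computed over $A$, has cohomological amplitude bounded independently of $M$ — one propagates the bound for $H^0(A)$-modules up an amplitude induction exactly as in \Cref{prop:gen}, using the truncation triangle $\sigma^{\le m}M \to M \to M/\sigma^{\le m}M$ and the fact that $\RR\Gamma_\m$ of a bounded complex of $H^0(A)$-modules is computed by classical local cohomology, which vanishes above degree $\lcd(H^0(A))$. Once $\RR\Gamma_\m$ is known to be a functor of bounded cohomological amplitude that commutes with coproducts (the latter being essentially formal from the colimit formula, since coproducts are filtered colimits of finite sub-coproducts and each $\RuHom_A(A/A_{\ge d}, -)$ commutes with coproducts by the finite-resolution argument), a standard argument upgrades ``commutes with coproducts + bounded amplitude'' to ``commutes with all filtered colimits.''

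The main obstacle, I expect, is the uniform cohomological amplitude bound for $\RR\Gamma_\m$ on \emph{all} of $\D(A)$, not just $\Db(A)$: a general dg-$A$-module $M$ need not be bounded, so one cannot directly invoke the amplitude induction of \Cref{prop:gen}. The way around this is to prove the amplitude bound first for $M \in \Db(A)$ using \Cref{prop:gen} and the $\lcd$ hypothesis, then note that $\RR\Gamma_\m$ commutes with coproducts and with the colimit defining $M$ as a filtered colimit of its (finitely generated, bounded) sub-dg-modules — but this is circular unless handled carefully. The clean route is instead: show directly that the functor $\RuHom_A(A/A_{\ge d}, -)$ commutes with filtered colimits for each fixed $d$ (a compactness statement, true because $A/A_{\ge d} \in \Db(A)$ admits a semifree resolution finitely generated in each bidegree and concentrated in finitely many cohomological degrees once we also use connectedness of $A$ and the Noetherian hypothesis on $H^0(A)$), and then the $\lcd$ finiteness is what guarantees the colimit over $d$ stabilizes in each cohomological degree, so that the colimit $\RR\Gamma_\m = \colim_d \RuHom_A(A/A_{\ge d}, -)$ of functors-commuting-with-filtered-colimits again commutes with filtered colimits. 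I would structure the write-up around this second route, with the $\lcd$ hypothesis used precisely to control the $d$-colimit degreewise.
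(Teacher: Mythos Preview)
Your central claim---that each $\RuHom_A(A/A_{\ge d},-)$ commutes with filtered colimits, i.e.\ that $A/A_{\ge d}$ is compact in $\D(A)$---is false, and both of your proposed routes rest on it. The semifree resolution of $A/A_{\ge d}$ furnished by \cite[Proposition~2.16]{BROWN2025110035} is finitely generated in each \emph{fixed} cohomological degree, but it is in general unbounded below in the cohomological direction; neither connectedness of $A$ nor Noetherianity of $H^0(A)$ forces it to be perfect. Already for $A=\k[x]/(x^2)$ (concentrated in cohomological degree~$0$, with $\deg x=1$) one has $A/A_{\ge 1}=\k$ of infinite projective dimension, hence not compact in $\D(A)$. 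Note also that $\lcd(H^0(A))<\infty$ bounds the cohomological degree~$j$ for which $H^j_\m$ can be nonzero; it says nothing about stabilization of the system $\{\RuHom_A(A/A_{\ge d},-)\}_d$, so the final sentence of your proposal misidentifies where the hypothesis enters.

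The paper's proof works around the failure of compactness as follows. First, if the filtered system $\{M_i\}$ is \emph{uniformly} cohomologically bounded below, one replaces the unbounded resolution $F$ of $A/A_{\ge n}$ by a brutal truncation $F^{\ge s}$, which \emph{is} perfect; for inputs $N$ with $N^j=0$ for $j<t$ one checks that $H^0\uHom_A(F,N)\cong H^0\uHom_A(F^{\ge s},N)$ once $s<t$, and compactness of $F^{\ge s}$ then gives the desired colimit commutation for $H^0$. Second, the general case is reduced to the bounded-below one by showing that $H^0_\m(N)\cong H^0_\m(\sigma^{\ge -d-1}N)$ with $d=\lcd(H^0(A))$: after passing to $A^0$ via \Cref{prop:n}, one writes $N\simeq\RR\lim_t\sigma^{\ge -t}N$, uses that $\RR\Gamma_\m$ commutes with derived limits, and then the $\lcd$ bound forces the tower $\{H^0_\m(\sigma^{\ge -t}N)\}_t$ to stabilize (hence be Mittag-Leffler, so the $\RR^1\!\lim$ term in the associated short exact sequence vanishes). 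This derived-limit step is where $\lcd<\infty$ actually enters, and your proposal contains no analogue of it.
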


\begin{proof}
   Let $\{M_{i}\}_{i\in I}$ be a filtered system of objects in $\D(A)$. Our use of subscripts here collides with our notation for internally graded components, but since we do not refer to the internal grading on the $M_i$'s in this proof, this should not cause confusion. Applying appropriate cohomological shifts, it suffices to show that the canonical map
   \begin{equation}
   \label{eqn:colim}
   \underset{i\in I}{\colim} \text{ }H^0_\m(M_i)\to H^0_\m(\underset{i\in I}{\colim} \text{ }M_i)
   \end{equation}
   is an isomorphism. Let us first assume that there exists $t \in \Z$ such that $(M_i)^j = 0$ for all~$j < t$ and all $i \in \Z$. Fix $n \ge 0$, and let $F$ be a minimal semifree resolution of $A / A_{\ge n}$ as constructed in \cite[Proposition 2.16]{BROWN2025110035}. It follows from this construction that $F^j = 0$ for $j > 0$, and $F^{\ge j}$ is a perfect dg-$A$-module for all $j \le 0$. Fix $s < t$. For any dg-$A$-module $N$ satisfying $N^j = 0$ for $j < t$, the natural  map 
   $H^0 \uHom_A(F, N) \to H^0 \uHom_A(F^{\ge s}, N)$ is an isomorphism. It follows that there is a commutative square
   $$
    \xymatrix{
    \underset{i\in I}{\colim} \text{ }H^0 \uHom_A(F, M_i) \ar[r] \ar[d]^{\cong} & H^0 \uHom_A(F, \underset{i\in I}{\colim} \text{ }M_i) \ar[d]^{\cong}  \\
    \underset{i\in I}{\colim} \text{ }H^0 \uHom_A(F^{\ge s}, M_i) \ar[r] & H^0 \uHom_A(F^{\ge s}, \underset{i\in I}{\colim} \text{ }M_i).
    }
   $$
    Since $F^{\ge s}$ is a compact object in $\D(A)$, the bottom map in this square is an isomorphism, and hence so is the top. It follows that \eqref{eqn:colim} is an isomorphism in this case.
   
    Given $t \in \Z$, let $\sigma^{\geq t}$ denote the smart truncation functor described in~\cite[Remark 2.7]{BROWN2025110035}. We have $\underset{i\in I}{\colim}\text{ }(\sigma^{\geq t}M_i) = \sigma^{\geq t} (\underset{i\in I}{\colim} \text{ } M_i)$. Set $d \ce \lcd(H^0(A))$, and let $N \in \D(A)$. By the argument above, it suffices to show that the natural map $H^0_\m(N) \to H^0_\m(\sigma^{\geq -d-1}N)$ is an isomorphism.  

By \cite[Proposition 3.7]{serreduality_dgalgebras}, we may assume $A=A^0$ . Let $\RR\underset{t}{\lim}(\sigma^{\ge -t} N) \in \D(A)$ denote the derived limit of the system $\{\sigma^{\geq -t}N\}_{t\geq 0}$. Since the natural map $N \to \RR\underset{t}{\lim}(\sigma^{\ge -t} N)$ is an isomorphism in $\D(A)$, and the functor $\RR\Gamma_\m$ commutes with derived limits, there is a natural isomorphism $\RR\Gamma_\m(N) \xra{\cong} \RR\underset{t}{\lim}(\RR\Gamma_\m(\sigma^{\ge -t} N))$ in $\D(A)$. We therefore have a canonical short exact sequence


\begin{equation}
\label{eqn:limit}
  0\to \RR\underset{t}{\lim}^1(H_\m^{-1}(\sigma^{\geq -t}N)) \to H^0_\m(N) \to \underset{t}{\lim} (H_{\m}^0(\sigma^{\geq -t}N)) \to 0
\end{equation}
(see e.g. \cite[Tag 07KY]{stacks-project}). For each $t\in \mathbb{Z}$, we have a triangle

\begin{equation}
  H^{-t}(N)[t] \to \sigma^{\geq -t}N \to \sigma^{\geq -t+1} N \to H^{-t}(N)[t+1].
\end{equation}
It follows that, for $t > d$, the canonical map $H_{\m}^0(\sigma^{\geq -t}N) \to H_{\m}^0(\sigma^{\geq -t+1}N)$ is an isomorphism. The system $\{H_\m^0(\sigma^{\geq -t}N)\}_{t \ge 0}$ thus satisfies the Mittag-Leffler condition~\cite[Definition 3.5.6]{Weibel}. By \cite[Corollary 3.5.4 and Proposition 3.5.7]{Weibel}, we conclude that $\RR\underset{t}{\lim}^1(H_\m^{-1}(\sigma^{\geq -t}N))=0$, and so the short exact sequence~\eqref{eqn:limit} implies that the canonical map $H^0_\m(N) \to \underset{t}{\lim} (H_{\m}^0(\sigma^{\geq -t}N))$ is an isomorphism. Moreover, we have shown that the system $\{H^0_{\m}(\sigma^{\geq -t}N)\}_{t \ge 0}$ stabilizes when $t > d$. This gives the desired isomorphism $H^0_{\m}(N)\cong H^0_{\m}(\sigma^{\geq -d - 1}N)$.
\end{proof}

Let $\D_{\Tors}(A)$ denote the subcategory of $\D(A)$ given by dg-$A$-modules $M$ with torsion cohomology, meaning that every $x \in H(M)$ satisfies $x \cdot H(A)_{\ge d} = 0$ for some $d \gg 0$. The functor~$\RR\Gamma_\m$ is the right adjoint of the fully faithful embedding $\D_{\Tors}(A) \into \D(A)$~\cite[Proposition 3.1]{serreduality_dgalgebras}. 

\begin{lemma}
\label{lem:counit}
Let $M \in \D(A)$. The counit map $\varepsilon \co \RR\Gamma_\m(M) \to M$ is an isomorphism in $\D(A)$ if and only if $M \in \D_{\Tors}(A)$. 
\end{lemma}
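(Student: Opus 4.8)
The plan is to prove both implications, with the ``if'' direction being essentially formal and the ``only if'' direction requiring the structural input from the preceding lemmas.

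\textbf{The easy direction.} Suppose $M \in \D_{\Tors}(A)$. By \cite[Proposition 3.1]{serreduality_dgalgebras}, $\RR\Gamma_\m$ is the right adjoint to the inclusion $\iota \co \D_{\Tors}(A) \into \D(A)$, so the counit $\varepsilon \co \iota \RR\Gamma_\m(M) \to M$ is exactly the counit of this adjunction. Since $\iota$ is fully faithful, its counit is an isomorphism on every object of $\D_{\Tors}(A)$; hence $\varepsilon$ is an isomorphism when $M \in \D_{\Tors}(A)$. (One can also argue more concretely: for $M$ torsion, each $\RuHom_A(A/A_{\ge d}, M) \to M$ has cohomology that agrees with $H(M)$ in the relevant range, and the colimit recovers $M$.)

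\textbf{The hard direction.} Now suppose $\varepsilon \co \RR\Gamma_\m(M) \to M$ is an isomorphism; we must show $H(M)$ is torsion, i.e. $\RR\Gamma_\m(M) \in \D_{\Tors}(A)$ already — which is immediate from the adjunction since $\RR\Gamma_\m$ lands in $\D_{\Tors}(A)$. So the real content is simply that $\RR\Gamma_\m(M)$ always lies in $\D_{\Tors}(A)$, and then $M \cong \RR\Gamma_\m(M)$ forces $M \in \D_{\Tors}(A)$. To see $\RR\Gamma_\m(M) \in \D_{\Tors}(A)$: by definition $\RR\Gamma_\m(M) = \colim_d \RuHom_A(A/A_{\ge d}, M)$, so it suffices to show each $\RuHom_A(A/A_{\ge d}, M)$ has torsion cohomology and that $\D_{\Tors}(A)$ is closed under filtered colimits. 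The latter is routine. For the former, one uses that $A/A_{\ge d}$ is built from finitely many cohomological shifts of finitely generated $H^0(A)$-modules annihilated by $A_{\ge d}$ (using connectedness and Setup~\ref{setup}); computing $\RuHom_A(A/A_{\ge d}, M)$ via a semifree resolution as in \cite[Proposition 2.16]{BROWN2025110035} and devissage on the truncation filtration of $A/A_{\ge d}$ reduces to the claim that $\RuHom_A(L, M)$ has torsion cohomology for $L$ a finitely generated $H^0(A)$-module with $L \cdot A_{\ge d} = 0$, which holds because the $H(A)$-module structure on $H\RuHom_A(L,M)$ factors (in a suitable sense) through the action on $L$.

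\textbf{Main obstacle.} The only genuinely delicate point is verifying that $\RuHom_A(A/A_{\ge d}, M)$ has torsion cohomology for \emph{arbitrary} $M \in \D(A)$, since $M$ need not be bounded or locally finite — so Proposition~\ref{prop:extfinite} does not apply directly. I expect the cleanest route is not the devissage sketched above but rather to invoke the adjunction of \cite[Proposition 3.1]{serreduality_dgalgebras} directly: that proposition already asserts $\RR\Gamma_\m$ is right adjoint to $\D_{\Tors}(A) \into \D(A)$, which in particular records that $\RR\Gamma_\m(M) \in \D_{\Tors}(A)$ for all $M$. Granting that, the lemma is a two-line consequence: if $\varepsilon$ is an isomorphism then $M \cong \RR\Gamma_\m(M) \in \D_{\Tors}(A)$, and conversely if $M \in \D_{\Tors}(A)$ then $\varepsilon$ is the counit of a fully faithful inclusion and hence an isomorphism. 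So the proof should be short, with all the weight carried by the cited adjunction.
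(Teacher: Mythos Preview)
Your proposal is correct and, once the unnecessary d\'evissage detour in the ``Main obstacle'' paragraph is discarded, it is essentially the paper's proof: both directions follow formally from the adjunction $\iota \dashv \RR\Gamma_\m$ of \cite[Proposition~3.1]{serreduality_dgalgebras}. Note only that the paper labels the directions oppositely---it calls the implication ``$\varepsilon$ is an isomorphism $\Rightarrow M \in \D_{\Tors}(A)$'' obvious (precisely because $\RR\Gamma_\m$ lands in $\D_{\Tors}(A)$) and spends its two lines on the other direction, deducing that $\varepsilon$ is an isomorphism from the unit $\eta$ being an isomorphism together with a triangle identity.
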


\begin{proof}
The ``only if" implication is obvious. Suppose $M\in \D_{\Tors}(A)$. Since $\RR\Gamma_\m$ is the right adjoint of a fully faithful embedding, the unit map $\eta \co M \to \RR\Gamma_\m(M)$ is an isomorphism in $\D(A)$. By the definition of an adjunction, the composition $\RR\Gamma_\m(M) \xra{\varepsilon} M \xra{\eta} \RR\Gamma_\m(M)$ is a quasi-isomorphism, and so $\varepsilon$ is as well. 
\end{proof}

Let $\n$ denote the homogeneous maximal ideal of $A^0$, and let $\RR\Gamma_\n$ denote the derived $\n$-torsion functor for $A^0$. If $M$ is an $H^0(A)$-module, then there is no difference between the derived $\n$-torsion of $M$ relative to $A^0$ or $H^0(A)$, and so we use the same notation for both. 

\begin{prop}[\cite{serreduality_dgalgebras} Proposition 3.7]
\label{prop:n}
If $M \in \D(A)$, then  $\RR\Gamma_\m(M) \cong \RR\Gamma_\n(M)$ in $\D(A^0)$.
\end{prop}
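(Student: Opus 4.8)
The plan is to characterise both $\RR\Gamma_\m$ and $\RR\Gamma_\n$ by their universal property — each is the right adjoint of the inclusion of the subcategory of objects with torsion cohomology — and to deduce the statement from a comparison of these two torsion theories under restriction of scalars along the inclusion $\iota\c A^0\into A$ (here $A^0$ is a genuine dg-subalgebra with zero differential, since $A^1=0$ by connectedness, so $\iota$ makes sense). Write $\iota^*\c\D(A)\to\D(A^0)$ for restriction. The key structural inputs are: (i) $\iota^*\k_A=\k_{A^0}$, where $\k_A=A/\m$ and $\k_{A^0}=A^0/\n$; and (ii) $\iota^*$ maps $\D_{\Tors}(A)$ into $\D_{\Tors}(A^0)$. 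Point (ii) is elementary: the $A^0$-action on the cohomology of any dg-$A$-module factors through $A^0\onto H^0(A)\into H(A)$, and $(A^0)_{\geq d}$ lands in $H^0(A)_{\geq d}\subseteq H(A)_{\geq d}$, so being $H(A)_{\geq d}$-torsion forces being $(A^0)_{\geq d}$-torsion. Applying $\iota^*$ to the canonical triangle $\RR\Gamma_\m(M)\to M\to C\xra{+1}$ (in which $C$ lies in $\D_{\Tors}(A)^{\perp}$, since $\RR\Gamma_\m(C)=0$) produces a triangle whose first term already lies in $\D_{\Tors}(A^0)$ by (ii); so, by uniqueness of the torsion decomposition over $A^0$, the proposition reduces to showing $\iota^*C\in\D_{\Tors}(A^0)^{\perp}$, i.e. that $\iota^*$ preserves the right orthogonals of the torsion subcategories as well.

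To make that condition concrete I would record the following, which does not rely on \Cref{lem:colimits} (and so avoids circularity): since $A/A_{\geq d}$ admits a finite filtration by internal degree with subquotients that are coproducts of shifts of $\k_A$, it lies in the localizing subcategory $\langle\k_A\rangle_{\mathrm{loc}}=\D_{\Tors}(A)$; hence for any $N\in\D(A)$ one has $\RR\Gamma_\m(N)=0$ if and only if $\RuHom_A(\k_A,N)=0$, using that $\{Z:\RuHom_A(Z,N)=0\}$ is localizing for one direction and the adjunction (plus \Cref{lem:counit}) for the other. The same equivalence holds over $A^0$. Now the tensor–hom adjunction for $\iota$ gives $\RuHom_{A^0}(\k_{A^0},\iota^*N)\simeq\RuHom_A(\k_{A^0}\Lotimes_{A^0}A,N)$, so $\iota^*$ preserves these orthogonals provided
$$
\k_{A^0}\Lotimes_{A^0}A\ \in\ \langle\k_A\rangle_{\mathrm{loc}}=\D_{\Tors}(A),
$$
equivalently provided $\Tor^{A^0}_*(\k,A)=H^*(\k_{A^0}\Lotimes_{A^0}A)$ is a torsion $H(A)$-module. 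This is the single point at which \Cref{setup} is genuinely used: because $H(A)$ is a finite module over the Noetherian ring $H^0(A)$, it is bounded in cohomological degree and each $H^q(A)$ is a finite $H^0(A)$-module; feeding this into the hyper-$\Tor$ spectral sequence $E_2^{-p,q}=\Tor^{A^0}_p(\k,H^q(A))\Rightarrow H^{q-p}(\k_{A^0}\Lotimes_{A^0}A)$ shows each cohomology group of $\k_{A^0}\Lotimes_{A^0}A$ is finite-dimensional, and finite-dimensionality together with the cohomological boundedness of $H(A)$ then forces $\m$-torsion.

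Putting the pieces together: $\iota^*C\in\D_{\Tors}(A^0)^{\perp}$, the triangle $\iota^*\RR\Gamma_\m(M)\to\iota^*M\to\iota^*C\xra{+1}$ is the torsion decomposition of $\iota^*M$ over $A^0$, and therefore $\iota^*\RR\Gamma_\m(M)\cong\RR\Gamma_\n(\iota^*M)$ in $\D(A^0)$, as claimed.

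The main obstacle is the last step of the second paragraph — genuinely identifying the cohomology of $\k_{A^0}\Lotimes_{A^0}A$ as a torsion $H(A)$-module. The delicate feature is that $A$ need not be bounded below in cohomological degree, nor of finite projective dimension over $A^0$, so the hyper-$\Tor$ spectral sequence can have infinitely many nonzero columns; one must argue carefully (for instance via the minimal semifree resolutions of \cite[Proposition 2.16]{BROWN2025110035}, or after reducing to the case that $A^0$ itself is Noetherian) that each fixed cohomology group nevertheless stays finite-dimensional and that the resulting $H(A)$-module structure is actually torsion. The remaining ingredients — the adjunction formalism, the $\k_A$-detection of $\RR\Gamma_\m=0$, and the tensor–hom adjunction along $\iota$ — are routine.
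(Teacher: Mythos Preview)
The present paper gives no proof of this proposition: it is simply quoted from \cite[Proposition~3.7]{serreduality_dgalgebras} and used as a black box throughout. There is therefore no argument here to compare your attempt against.

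On the attempt itself: the strategy is reasonable, and you have correctly isolated the single nontrivial step---showing that $\k_{A^0}\Lotimes_{A^0}A$ lies in $\D_{\Tors}(A)$---and honestly flagged it as incomplete. The gap is genuine, and your spectral-sequence sketch does not close it under Setup~\ref{setup}, which only assumes $H^0(A)$ Noetherian, not $A^0$. If $A^0$ is not a finitely generated $\k$-algebra then the $E_2$-terms $\Tor^{A^0}_p(\k,H^q(A))$ can already be infinite-dimensional (for instance $\Tor^{A^0}_1(\k,\k)$ is the space of minimal algebra generators of $A^0$), and although only finitely many columns contribute to each fixed total degree, finite-dimensionality of the abutment does not follow. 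Your proposed fallback of ``reducing to the case that $A^0$ itself is Noetherian'' is not an available move in this generality, and the appeal to minimal semifree resolutions over $A$ does not obviously control $\Tor$ over $A^0$. A more robust line is to bypass $\k$ entirely and compare the colimit systems $\{A/A_{\ge d}\}$ and $\{A^0/A^0_{\ge d}\}$ directly via the extension--restriction adjunction, using module-finiteness of $H(A)$ over $H^0(A)$ to see that the torsion filtrations by $H(A)_{\ge d}$ and by the image of $(A^0)_{\ge d'}$ are cofinal on cohomology; that is closer to how such change-of-rings comparisons are typically carried out.
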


We recall from~\cite{serreduality_dgalgebras} a certain Ext-vanishing condition for dg-algebras:

\begin{dfn}
\label{def:chi}
We say $A$ satisfies \emph{condition~$\chi$} if, for all $M\in \Db(A)$ and~$j \in \Z$, we have $\uExt^j_A(\k, M)_i = 0$ for $i \gg 0$. 
\end{dfn}

We will show that Definition~\ref{def:chi} recovers Artin-Zhang's condition $\chi$ for ordinary algebras~\cite[Definition 3.7]{AZ} when $A = A^0$; see Remark~\ref{rem:chi}. The following result is an extension of \cite[Lemma~3.4 and Proposition~3.5(1)]{AZ}. 

\begin{lemma}
\label{lem:localcoh}
Let $M \in \Db(A)$ and $j \in \Z$.  
\begin{enumerate}
\item Let $T \in \Db(A)$ be a nonzero torsion object such that $H^m(T) = 0$ for $m > 0$, and set
$$
\lambda  \ce \inf\{i \text{ : } \Ext^m_A(\kk, M)_i \ne 0 \text{ for } m \le j\}, \quad \rho  \ce  \sup\{i \text{ : } \Ext^m_A(\kk, M)_i \ne 0 \text{ for } m \le j\},
$$
$$
\ell  \ce \inf\{i \text{ : } H(T)_i \ne 0\}, \quad r  \ce \sup\{i \text{ : } H(T)_i \ne 0 \} .
$$
For all $m \le j$, we have $\Ext^m_A(T, M)_i = 0$ for $i < \lambda - r$ and $i > \rho - \ell$. 
\item Assume $A$ satisfies condition $\chi$, let $N \in \Db(A)$ be such that $H^m(N) = 0$ for $m > 0$, and fix~$i \in \Z$. For $d_0 \gg 0$ and $d \ge d_0$, the canonical map
$$
\uExt^j_A(N/N_{\ge d_0}, M)_{\ge i} \to \uExt^j_A(N/N_{\ge d}, M)_{\ge i}
$$
is an isomorphism. In particular, there is an isomorphism
$
H^j_\m(M)_{\ge i} \cong \uExt^j_A(A/A_{\ge d_0}, M)_{\ge i}. 
$
\end{enumerate}
\end{lemma}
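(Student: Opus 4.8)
The plan is to mimic the proofs of \cite[Lemma~3.4 and Proposition~3.5(1)]{AZ}, adapting the arguments to the differential bigraded setting; the key inputs will be the change-of-rings Proposition~\ref{prop:n}, the finiteness statements in Proposition~\ref{prop:extfinite}, and the classical generation result Proposition~\ref{prop:gen}, together with condition~$\chi$ in part~(2).

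\smallskip
\textbf{Part (1).} First I would reduce to the case $A = A^0$: by Proposition~\ref{prop:n}, the functor $\RR\Gamma_\m$ agrees with $\RR\Gamma_\n$ after restriction to $\D(A^0)$, and the objects $\k$, $T$, $M$ all have their local cohomology and $\Ext$-groups (in the relevant internal degrees) computed over $A^0$; but one must be a little careful, since $T$ and $M$ are dg-modules, not just $A^0$-modules. The cleanest route is to induct on $\amp(M)$ and on the length of a filtration of $T$ by (shifts of) finitely generated $H^0(A)$-modules, using Proposition~\ref{prop:gen} to produce such a filtration of $T$ and the smart truncation triangles to reduce $M$ to a single cohomology module. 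When $T$ is (a shift of) a finite-dimensional $\k$-vector space, say $T = \k^N(c)[e]$ for appropriate $c,e$, one builds a finite filtration of $T$ with subquotients $\k(c)[e]$ and runs a long exact sequence argument: the vanishing ranges $[\lambda, \rho]$ for $\Ext_A^m(\k,M)_i$ (nonempty and bounded by Proposition~\ref{prop:extfinite}) propagate, after the internal-degree shift by $c$ and the bookkeeping of the generator degrees of $T$, to the claimed range $[\lambda - r, \rho - \ell]$ for $\Ext_A^m(T,M)_i$. The general torsion $T$ is handled by the filtration, tracking how $\ell$ and $r$ change under extensions.

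\smallskip
\textbf{Part (2).} Here I would fix $M$, $N$, $i$ and show the maps $\uExt_A^j(N/N_{\ge d_0}, M)_{\ge i} \to \uExt_A^j(N/N_{\ge d},M)_{\ge i}$ stabilize. Applying $\RuHom_A(-,M)$ to the triangle $N_{\ge d}/N_{\ge d_0} \to N/N_{\ge d_0} \to N/N_{\ge d} \to$, it suffices to bound, in a fixed internal degree range $\ge i$, the groups $\uExt_A^m(N_{\ge d}/N_{\ge d_0}, M)$ for $m \in \{j-1, j\}$ and all $d \ge d_0 \gg 0$. The module $N_{\ge d}/N_{\ge d_0}$ is a torsion object concentrated in internal degrees $\ge d_0$; part~(1) applied with $T = N_{\ge d}/N_{\ge d_0}$ shows $\Ext_A^m(T,M)_i$ vanishes for $i < \lambda - r$, but $r$ can grow with $d$, so this alone is not enough. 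Instead the point is that condition~$\chi$ controls the \emph{lower} end: since $T$ is a finite successive extension of shifts $\k(c)$ with $c \le -d_0$, and $\chi$ forces $\uExt_A^m(\k,M)_i = 0$ for $i \gg 0$ for each $m$, the shifted vanishing gives $\uExt_A^m(T,M)_i = 0$ once $i + d_0$ exceeds the ($m$-dependent) $\chi$-bound. Thus for $d_0 \gg 0$ — large enough that this bound drops below $i$ for $m = j-1, j$ — the relevant $\uExt$-groups of $N_{\ge d}/N_{\ge d_0}$ in internal degrees $\ge i$ vanish for all $d \ge d_0$, so the map in the statement is an isomorphism. Taking the colimit over $d$ and using the definition $\RR\Gamma_\m(M) = \colim_d \RuHom_A(A/A_{\ge d}, M)$ together with Lemma~\ref{lem:colimits}-type commutation of cohomology with the filtered colimit (the colimit is filtered, so $H^j$ commutes with it) yields the final isomorphism $H^j_\m(M)_{\ge i} \cong \uExt_A^j(A/A_{\ge d_0}, M)_{\ge i}$.

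\smallskip
\textbf{Main obstacle.} The subtle point is the interaction between the two ends of the vanishing range in part~(2): part~(1) alone gives a lower bound $\lambda - r$ that degrades as $d \to \infty$, so it is essential to see that condition~$\chi$ provides a bound on the \emph{upper} internal degree of $\uExt_A^m(\k, M)$ that is uniform in nothing but $m$, and that passing to the torsion module $T = N_{\ge d}/N_{\ge d_0}$ only shifts this bound by the (fixed) quantity $d_0$, not by $d$. Making this uniformity precise — i.e. checking that the $\chi$-bound for $\uExt_A^m(\k,M)$ can be chosen once and for all for $m \in \{j-1,j\}$ and then absorbed into the choice of $d_0$ — is the heart of the argument; everything else is a filtration/long-exact-sequence bookkeeping exercise analogous to \cite{AZ}.
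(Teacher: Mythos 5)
Your plan for part (2) is essentially the paper's argument: apply $\RuHom_A(-,M)$ to the triangle built from $N_{\ge d_0}/N_{\ge d}$ (note this is the correct torsion kernel, not $N_{\ge d}/N_{\ge d_0}$ as written, since $N_{\ge d}\subseteq N_{\ge d_0}$ for $d\ge d_0$), and observe that since that torsion module lives in internal degrees $\ge d_0$, part (1) gives $\uExt^m_A(N_{\ge d_0}/N_{\ge d},M)_i=0$ for $i>\rho-\ell\le\rho-d_0$, where $\rho$ is finite by condition $\chi$. This is exactly the paper's ``$\lim_{d_0\to\infty}\sup\{\dots\}=-\infty$'' step, and your ``Main obstacle'' paragraph correctly pinpoints the crux (the $\chi$-bound is uniform and translates by $d_0$, not $d$). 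The phrase ``controls the lower end'' should read ``upper end'' --- what $\chi$ caps is $\rho$, hence the upper edge $\rho-\ell$ of the support --- but the computation you write down is the right one.

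Your plan for part (1), however, introduces a detour the paper does not take and that is actually hazardous. The paper fixes $M$ entirely and inducts only on $r-\ell$, filtering $T$ by internal degree via the subobjects $T_{\ge s+1}$; this is cleaner and gives the sharp threshold. Your proposal to additionally reduce $M$ via smart truncations (induction on $\amp(M)$) does not clearly work: the quantities $\lambda$ and $\rho$ are defined intrinsically in terms of $\Ext^m_A(\k,M)$, and while a triangle $\sigma^{\le t}M\to M\to M/\sigma^{\le t}M$ bounds $\Ext^m_A(T,M)$ by the $\Ext$'s of the two pieces, the thresholds $\lambda_i,\rho_i$ for the pieces do not dominate $\lambda,\rho$ for $M$ (cancellation in the long exact sequence for $\Ext^m_A(\k,-)$ can make $\lambda>\min(\lambda_1,\lambda_2)$), so the inductive step would yield a weaker bound than claimed. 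The reduction-to-$A^0$ idea via Proposition~\ref{prop:n} is also a non-starter here: that proposition concerns $\RR\Gamma_\m$, not $\uExt_A$, and you yourself flag this. Drop both detours --- the induction on $r-\ell$ alone suffices, with the filtration of $T$ by $T_{\ge s+1}$ and the observation that a torsion object concentrated in a single internal degree is a direct sum of shifts $\k(-s)[t]$ --- and your argument matches the paper's.
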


\begin{proof}
For (1), we argue by induction on $r-\ell$. Suppose $r-\ell= 0$, and say $H(T)_s \ne 0$. There is an isomorphism $T \cong T_{\ge s} / T_{\ge s+1}$ in $\D(A)$, and so $T$ is isomorphic in $\D(A)$ to a direct sum of objects of the form $\k(-s)[t]$ for $t \ge 0$. In this case, for all $m \le j$, we have $\Ext_A^m(T, M)_i = 0$ for $i < \lambda - s$ and $i > \rho - s$. 
If $r-\ell> 0$, let $s \ce \min\{i \text{ : } H(T)_i \ne 0\}$. The result follows by analyzing the long exact sequence obtained by applying $\Ext^*_A(- M)$ to the short exact sequence $0 \to T_{\ge s+1} \to T \to T / T_{\ge s+1} \to 0$. As for (2): for any integers $d \ge d_0$, we have an exact sequence
$$
\uExt^{j-1}_A(N_{\ge d_0}/N_{\ge d}, M) \to \uExt^j_A(N/N_{\ge d_0}, M) \to \uExt^j_A(N/N_{\ge d}, M) \to \uExt^j_A(N_{\ge d_0}/N_{\ge d}, M).
$$
Since condition $\chi$ holds, (1) implies $\underset{d_0 \to \infty}{\lim} \sup\{m \text{ : } \uExt^{t}_A(N_{\ge d_0}/N_{\ge d}, M)_m \ne 0 \text{ for } t \le j \} = - \infty$.
The statement immediately follows. 
\end{proof}

The following is an extension of \cite[Corollary 3.6(1)]{AZ}.

\begin{lemma}
\label{lem:chij}
Let $M \in \Db(A)$ and $T$ a torsion object in $\Db(A)$ such that $H^m(T) = 0$ for $m > 0$. If  $\Ext_A^j(\kk, M)_i = 0$ for $i \gg 0$ and all $j \le j_0$, 
then $\Ext_A^j(T, M)_i = 0$ for $i \gg 0$ and all $j \le j_0$. 
\end{lemma}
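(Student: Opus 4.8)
The plan is to deduce this immediately from Lemma~\ref{lem:localcoh}(1). If $T = 0$ the statement is trivial, so assume $T \neq 0$. Since $T \in \Db(A)$, the module $H(T)$ is finitely generated over $H(A)$, and $H(A)$ is concentrated in internal degrees $\geq 0$ because $A$ is connected; hence $H(T)_i = 0$ for $i \ll 0$, so $\ell \ce \inf\{i : H(T)_i \neq 0\}$ is an integer. (One also checks, using that $T$ is torsion, that $H(T)$ is finite-dimensional over $\k$, so that $r \ce \sup\{i : H(T)_i \neq 0\}$ is an integer too; this is the only input needed about torsion objects, and only finiteness of $\ell$ is actually used in the argument below.)

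Next, observe that the hypothesis ``$\Ext^j_A(\k, M)_i = 0$ for $i \gg 0$ and all $j \leq j_0$'' is precisely the assertion that $\rho \ce \sup\{i : \Ext^m_A(\k, M)_i \neq 0 \text{ for some } m \leq j_0\}$ is finite. Applying Lemma~\ref{lem:localcoh}(1) with $j = j_0$ to the nonzero torsion object $T$, we obtain $\Ext^m_A(T, M)_i = 0$ for all $m \leq j_0$ and all $i > \rho - \ell$; since $\rho - \ell$ is finite, this is exactly the desired conclusion.

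There is essentially no obstacle here: the content is entirely contained in Lemma~\ref{lem:localcoh}(1), and one only has to match the quantifiers, noting that $\ell$ is finite (automatic for objects of $\Db(A)$) and that finiteness of $\rho$ is a restatement of the hypothesis. If a self-contained argument is preferred, one can instead induct on $r - \ell$ exactly as in the proof of Lemma~\ref{lem:localcoh}(1): when $r - \ell = 0$ one replaces $T$ by the quasi-isomorphic dg-module $T_{\geq \ell}/T_{\geq \ell + 1}$, a complex of $\k$-vector spaces concentrated in internal degree $\ell$ with cohomology in cohomological degrees $\leq 0$, hence a finite direct sum of shifts $\k(-\ell)[t]$ with $t \geq 0$, so that $\Ext^m_A(T, M)_i \cong \bigoplus_{t \geq 0} \Ext^{m - t}_A(\k, M)_{i + \ell}^{\oplus b_t}$ with finitely many nonzero $b_t$, and $m \leq j_0$ forces $m - t \leq j_0$; the inductive step applies $\Ext^*_A(-, M)$ to the short exact sequence $0 \to T_{\geq \ell + 1} \to T \to T/T_{\geq \ell + 1} \to 0$, whose outer terms have cohomology concentrated in strictly smaller internal amplitude, and uses the resulting long exact sequence.
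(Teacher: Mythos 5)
Your main argument takes a genuinely different (and arguably cleaner) route than the paper. The paper's proof re-runs an induction on the internal amplitude of $H(T)$: it fixes $j \le j_0$, reduces the amplitude-zero case to the hypothesis on $\uExt^*_A(\k,M)$, and then climbs via the short exact sequence $0 \to T_{\ge m+1} \to T \to T/T_{\ge m+1}\to 0$. You instead observe that \Cref{lem:localcoh}(1), applied with $j=j_0$, already yields the bound $\uExt^m_A(T,M)_i=0$ for $i>\rho-\ell$ and all $m\le j_0$, so the only thing to check is that $\rho-\ell$ is finite. This is a real shortcut: it makes explicit the uniform bound in terms of $\rho$ and $\ell$ that the paper's inductive argument keeps implicit. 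Your self-contained alternative at the end is, modulo notation, exactly the paper's proof.

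One small point deserves more care. You say the hypothesis is \emph{precisely} the assertion that $\rho = \sup\{i : \uExt^m_A(\k,M)_i\ne 0 \text{ for some } m\le j_0\}$ is finite. Strictly speaking, the hypothesis gives, for each $j\le j_0$, a bound $N_j$ with $\uExt^j_A(\k,M)_i=0$ for $i>N_j$; finiteness of $\rho$ then requires that the supremum of the $N_j$ over the unbounded range $j\le j_0$ is finite. This does hold, but because $\uExt^j_A(\k,M)=0$ for $j\ll 0$ when $M\in\Db(A)$ (e.g.\ by \cite[Lemma 5.5]{serreduality_dgalgebras}, which the paper invokes elsewhere), so that only finitely many $j$ contribute. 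You should either cite that vanishing or point out that the paper's phrasing could be read as a single uniform bound. With that minor clarification supplied, the argument is correct.
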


\begin{proof}
We may assume $T$ is not exact. Fix $j \le j_0$, and let $m \ce \min\{i \text{ : } H(T)_i \ne 0\}$. We argue by induction on
$
d \ce \max\{s - t \text{ : } H(T)_s \ne 0 \text{ and } H(T)_t \ne 0\}.
$
As argued in the proof of Lemma~\ref{lem:localcoh}, if $d = 0$, then $T$ is isomorphic in $\D(A)$ to a direct sum of objects of the form $\k(-s)[t]$ for $s \in \Z$ and $t \ge 0$, and we are done. 
Suppose $d > 0$. We have $\uExt_A^j(T_{\ge m+1}, M)_i = 0$ for $i \gg 0$ by induction, and $\uExt_A^j(T / T_{\ge m+1}, M)_i = 0$ for $i \gg 0$ by the $d = 0$ case. The triangle
$$
\RuHom_A(T/T_{\ge m+1}, M) \to \RuHom_A(T, M) \to \RuHom_A(T_{\ge m+1}, M) \to
$$
now implies the result. 
\end{proof}

The following series of equivalent characterizations of condition $\chi$ extend several results of Artin-Zhang \cite[Corollary 3.6(2) and (3), Proposition 3.8(1) and (3)]{AZ}.

\begin{prop}
\label{prop:chi}
The following are equivalent:
\begin{enumerate}
\item Condition $\chi$ holds for $A$.
\item If $M, N \in \Db(A)$, $\dim_\k H(M) < \infty$, and $j \in \Z$; then $\uExt^j_A(M, N)_i = 0$ for $i\gg 0$. 
\item For all $M \in \Db(A)$ and $i, j \in \Z$, there is some $d_0 \in \Z$ such that $\uExt^j_A(A/A_{\ge d}, M)_{\ge i}$ is a finitely generated $H^0(A)$-module for all $d \ge d_0$. 
\item For all $M \in \Db(A)$ and $j \in \Z$, $H^j_\m(M)_{i} = 0$ for $i \gg 0$.
\item For all $M \in \Db(A)$ and $i, j \in \Z$, $H^j_\m(M)_{\ge i}$ is a finitely generated $H^0(A)$-module. 
\end{enumerate}
\end{prop}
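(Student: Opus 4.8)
The plan is to prove the equivalences as a collection of implications: $(1)\Leftrightarrow(2)$, the three implications $(1)\Rightarrow(3)$, $(1)\Rightarrow(4)$, $(1)\Rightarrow(5)$, and then the reverse implications $(5)\Rightarrow(4)$, $(4)\Rightarrow(1)$ and $(3)\Rightarrow(1)$. The forward implications out of $(1)$ are quick consequences of Lemmas~\ref{lem:chij} and \ref{lem:localcoh} together with Proposition~\ref{prop:extfinite}; the reverse ones, which extract condition $\chi$ from finiteness of local cohomology, carry all the weight.

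For $(1)\Leftrightarrow(2)$: the implication $(2)\Rightarrow(1)$ is immediate on taking $M=\k$, and for $(1)\Rightarrow(2)$ one shifts an object $M$ with $\dim_\k H(M)<\infty$ into nonpositive cohomological degrees, obtaining a torsion object $T$ with $H^{>0}(T)=0$, and applies Lemma~\ref{lem:chij}; condition $\chi$ supplies the hypothesis ``$\uExt^t_A(\k,N)_i=0$ for $i\gg0$ and all $t\le j_0$'' uniformly because $\uExt^t_A(\k,N)=0$ for $t\ll0$ (check this from a semifree resolution of $\k$ concentrated in nonpositive cohomological degrees), so only finitely many $t\le j_0$ are in play. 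For $(1)\Rightarrow(4)$, apply Lemma~\ref{lem:localcoh}(1) with $T=A/A_{\ge d}$ — a nonzero torsion object with $H^{>0}=0$ and $\inf\{i:H(T)_i\ne0\}=0$ — so $\chi$ makes $\rho=\sup\{i:\uExt^m_A(\k,M)_i\ne0,\ m\le j\}$ finite and $\uExt^j_A(A/A_{\ge d},M)_i=0$ for $i>\rho$ uniformly in $d$; passing to the filtered colimit defining $\RR\Gamma_\m$ gives $H^j_\m(M)_i=0$ for $i>\rho$. For $(1)\Rightarrow(3)$, note $A/A_{\ge d}\in\Db(A)$ has finite-dimensional cohomology, so by $(2)$ and Proposition~\ref{prop:extfinite} the module $\uExt^j_A(A/A_{\ge d},M)$ is finite-dimensional for every $d$, hence finitely generated over $H^0(A)$. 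For $(1)\Rightarrow(5)$, combine Lemma~\ref{lem:localcoh}(2) — which under $\chi$ identifies $H^j_\m(M)_{\ge i}$ with $\uExt^j_A(A/A_{\ge d_0},M)_{\ge i}$ for $d_0\gg0$ — with this finite-dimensionality.

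It remains to close the loop. The implication $(5)\Rightarrow(4)$ is short: $\RR\Gamma_\m(M)\in\D_{\Tors}(A)$, so $H^j_\m(M)$ is a torsion $H(A)$-module, hence $H^j_\m(M)_{\ge i}$ is a finitely generated torsion module over the Noetherian connected graded algebra $H^0(A)$, hence finite-dimensional, hence right-bounded. The crucial implication is $(4)\Rightarrow(1)$. Here I would use that $\RR\Gamma_\m$ is idempotent (it is the right adjoint of the fully faithful embedding $\D_{\Tors}(A)\hookrightarrow\D(A)$): the cone $C$ of the counit $\RR\Gamma_\m(M)\to M$ satisfies $\RR\Gamma_\m(C)=0$, and since $\k$ is a torsion bimodule the adjunction gives $\RuHom_A(\k,C)=0$, whence $\RuHom_A(\k,M)\simeq\RuHom_A(\k,\RR\Gamma_\m(M))$. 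The hypercohomology spectral sequence $E_2^{s,t}=\uExt^s_A(\k,H^t_\m(M))\Rightarrow\uExt^{s+t}_A(\k,\RR\Gamma_\m(M))$ converges, since $\RR\Gamma_\m(M)$ is cohomologically bounded below and $\uExt^s_A(\k,-)$ vanishes in negative cohomological degrees on modules, so it suffices to show each $\uExt^s_A(\k,H^t_\m(M))$ is right-bounded. By $(4)$ the module $H^t_\m(M)$ is right-bounded, so this reduces to the elementary fact that $\uExt^s_A(\k,P)$ is right-bounded for any right-bounded graded $H^0(A)$-module $P$, which one reads off from the minimal semifree resolution of $\k$, whose degree $-s$ term is generated in internal degrees $\ge s$. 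Finally, $(3)$ is folded in using that right-multiplication by $A_{\ge d}$ annihilates $A/A_{\ge d}$ already at the chain level, so $\uExt^\ast_A(A/A_{\ge d},M)$ is a module over the finite-dimensional algebra $H^0(A)/H^0(A)_{\ge d}$; under $(3)$ this forces $\uExt^j_A(A/A_{\ge d},M)$ to be finite-dimensional for $d\gg0$, after which one argues as for $(4)\Rightarrow(1)$ — via the colocalization identification — checking that the tower $\{\uExt^j_A(A/A_{\ge d},M)\}_d$ computing $\RR\Gamma_\m(M)$ behaves well enough to keep $H^j_\m(M)$ right-bounded.

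The main obstacle is $(4)\Rightarrow(1)$: recovering the Ext-vanishing of $\chi$ from mere right-boundedness of local cohomology. A direct comparison of $\uExt^j_A(\k,M)$ with $\uExt^j_A(A/A_{\ge d},M)$ through the triangles $A_{\ge1}/A_{\ge d}\to A/A_{\ge d}\to\k\to$ is circular, since the error terms $\uExt^\ast_A(A_{\ge1}/A_{\ge d},M)$ are themselves only controlled back in terms of $\uExt^\ast_A(\k,M)$; the identification $\RuHom_A(\k,M)\simeq\RuHom_A(\k,\RR\Gamma_\m(M))$ coming from the colocalization property is what breaks the circle, trading $M$ for an object whose cohomology is right-bounded by hypothesis. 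Everything else — the spectral sequence, the behavior of $\uExt^s_A(\k,-)$ on right-bounded modules, and the bookkeeping around $A/A_{\ge d}$ — is routine once that reduction is set up.
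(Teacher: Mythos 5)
Your proof is organized around a different implication diagram than the paper's, and the heart of the difference is $(4)\Rightarrow(1)$. The paper proves this by the direct computation: set $N^j \ce \underset{d\to\infty}{\colim}\ \uExt^j_A(A_{\ge1}/A_{\ge d},M)$, observe the exact sequence $N^{j-1}\to\uExt^j_A(\k,M)\to H^j_\m(M)$, and run an induction on $j$ using Lemma~\ref{lem:chij} to control the $N^{j-1}$ term. Your replacement — pass through the colocalization identity $\RuHom_A(\k,M)\simeq\RuHom_A(\k,\RR\Gamma_\m(M))$ and then run the hyperext spectral sequence $E_2^{s,t}=\uExt^s_A(\k,H^t_\m(M))$ — is a genuinely different route, and it does work: $\RR\Gamma_\m(M)$ is cohomologically bounded below (via Proposition~\ref{prop:n}), $\uExt^s_A(\k,-)$ vanishes for $s<0$ on modules, so the $E_2$ page occupies a bounded-below region in each total degree and converges, and the right-boundedness of each $E_2$ entry does propagate. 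One small correction: your justification that $\uExt^s_A(\k,P)$ is right-bounded for right-bounded $P$ invokes ``the degree $-s$ term of the minimal semifree resolution of $\k$ is generated in internal degrees $\ge s$.'' That is a fact about graded algebras, where homological degree and internal degree move together; for a dg-algebra $A$ with generators in negative cohomological degree the two gradings decouple, and the $\ge s$ bound is generally false. What is true, and suffices, is that connectedness of $A$ forces every semifree generator to sit in internal degree $\ge 0$ (with $\ge 1$ in cohomological degree $<0$, since the first syzygy of $\k$ is $\m = A_{\ge1}$); this alone makes $\uHom_A(F,P)^s_i=\prod_{p\ge0}\Hom_\k(V^{-s}_p,P_{i+p})$ vanish once $i$ exceeds $\sup\{i:P_i\ne0\}$.

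There is, however, a real gap in your treatment of $(3)\Rightarrow(1)$. You correctly observe that $(3)$ together with Proposition~\ref{prop:extfinite}(2) forces each $\uExt^j_A(A/A_{\ge d},M)$ to be finite-dimensional for $d\gg0$ (it is a finitely generated module over the finite-dimensional algebra $H^0(A)/H^0(A)_{\ge d}$). But from there you wave at ``argu[ing] as for $(4)\Rightarrow(1)$, checking that the tower $\{\uExt^j_A(A/A_{\ge d},M)\}_d$ behaves well enough to keep $H^j_\m(M)$ right-bounded.'' That check is precisely condition $(4)$, and it does not follow: a filtered colimit of finite-dimensional right-bounded modules need not be right-bounded, because the transition maps can keep contributing in ever-higher internal degrees. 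Controlling the transition maps is exactly what Lemma~\ref{lem:localcoh}(2) does, but that lemma presupposes condition $\chi$, so invoking it here would be circular. The paper avoids this by proving $(3)\Rightarrow(1)$ directly by induction on $j$ via the exact sequence $\uExt^{j-1}_A(A_{\ge1}/A_{\ge d_j},M)\to\uExt^j_A(\k,M)\to\uExt^j_A(A/A_{\ge d_j},M)$: the base case $j=\inf(M)$ uses the vanishing $\uExt^{<\inf(M)}_A(A_{\ge1}/A_{\ge d},M)=0$ to get an injection into the right term, and the inductive step feeds the right-boundedness already established for $\uExt^{<j}_A(\k,M)$ into Lemma~\ref{lem:chij} to kill the left term. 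You should substitute this (or some other complete argument) for your sketch of $(3)\Rightarrow(1)$; the rest of your structure then closes up.
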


\begin{remark}
\label{rem:chi}
The equivalence of (1) and (3) in Proposition~\ref{prop:chi} implies that our definition of condition~$\chi$ recovers Artin-Zhang's condition $\chi$ for ordinary graded algebras \cite[Definition 3.7]{AZ}.
\end{remark}

\begin{proof}[Proof of Proposition~\ref{prop:chi}]
We have (1) implies~(2) by Lemma~\ref{lem:chij}, and (2) implies (3) by \Cref{prop:extfinite}(1). We now show (3) implies (1). Let $M \in \Db(A)$; we argue by induction on $j$ that $\uExt^j_A(\kk, M)_i = 0$ for $i \gg 0$. 
By \cite[Lemma 5.5]{serreduality_dgalgebras}, we have $\uExt^j_A(\kk, M) = 0$ for $j < t \ce \inf(M)$.  For all $j \in \Z$, (3) implies that there exists some $d_j \gg 0$ such that $\uExt_A^j(A/A_{\ge d}, M)_i = 0$ for $i \gg 0$. Moreover, for all $j \in \Z$, we have an exact sequence
\begin{equation}
\label{eqn:ind}
\uExt^{j-1}_A(A_{\ge 1}/A_{\ge d_j}, M) \to \uExt^{j}_A(\k, M) \to \uExt^{j}_A(A/A_{\ge d_j}, M).
\end{equation}
Applying \cite[Lemma 5.5]{serreduality_dgalgebras} once again, we have $\uExt^{j}_A(A_{\ge 1}/A_{\ge d}, M) = 0$ for $j < t$. Thus, the exact sequence \eqref{eqn:ind} gives an injection $\uExt^{t}_A(\k, M) \into \uExt^{t}_A(A/A_{\ge d_t}, M)$. We conclude that $\uExt^{t}_A(\k, M)_i = 0$ for $i \gg 0$. For $j > 0$, Lemma~\ref{lem:chij}, along with the sequence \eqref{eqn:ind} and induction, gives $\uExt^{j}_A(\k, M)_i = 0$ for $i \gg 0$.

Since (3) implies (1), we may invoke Lemma~\ref{lem:localcoh}(2) to conclude that (3) implies (5). It is also immediate that (5) implies (4). Finally, we show (4) implies~(1). Let $M \in \Db(A)$, and set $N^j \ce \underset{d \to \infty}{\colim} \text{ } \uExt^{j}_A(A_{\ge 1} / A_{\ge d}, M)$. For all $j \in \Z$, we have an exact sequence
\begin{equation}
\label{eqn:ext}
N^{j-1} \to \uExt^{j}_A(\k, M) \to H^j_\m(M).
\end{equation}

We prove by induction on $j$ that $\uExt^{j}_A(\k, M)_i = 0$ for $i \gg 0$. It follows from \cite[Lemma 5.5]{serreduality_dgalgebras} that $N^j = 0$ for $j \ll 0$. If $N^j = 0$ for all~$j$, then we are done; otherwise, let $j_0 \ce \min \{j \text{ : } N^j \ne 0\}$. The sequence~\eqref{eqn:ext} gives an injection $\uExt^{j_0}_A(\k, M) \into H^{j_0}_{\m}(M)$, and so $\uExt^{j_0}_A(\k, M)_i = 0$ for $i \gg 0$. If $j > j_0$, then Lemma~\ref{lem:chij} implies  $N^{j-1}_i = 0$ for $i \gg 0$, and so $\Ext^j_A(\k, M)_i = 0$ for~$i \gg 0$. 
\end{proof}

\begin{thm}
\label{thm:chi}
Condition $\chi$ holds for $A$ if and only if it holds for $H^0(A)$. 
\end{thm}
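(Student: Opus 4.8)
The plan is to deduce both implications from the local-cohomology reformulation of condition~$\chi$ provided by Proposition~\ref{prop:chi}, namely that $\chi$ holds for $A$ if and only if $H^j_\m(M)_i = 0$ for $i \gg 0$ for every $M \in \Db(A)$ and every $j \in \Z$ (and likewise with $H^0(A)$ in place of $A$). Two further ingredients then carry the argument: this vanishing condition can be tested on a classical generating set of $\Db(A)$, and by Proposition~\ref{prop:gen} such a set is furnished by the finitely generated $H^0(A)$-modules; moreover, on an $H^0(A)$-module $N$, viewed in $\Db(A)$ by restriction of scalars along $A \to H^0(A)$, the $\m$-torsion $\RR\Gamma_\m(N)$ over $A$ is identified by Proposition~\ref{prop:n} with the $\n$-torsion $\RR\Gamma_\n(N)$, which is the same whether computed over $A^0$ or over $H^0(A)$ by the remark preceding Proposition~\ref{prop:n}. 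Before running this, I would record that $H^0(A)$, regarded as a differential bigraded $\k$-algebra concentrated in cohomological degree $0$ with zero differential, again satisfies Setup~\ref{setup} (it is connected because $A$ is, Noetherian by hypothesis, and $H(H^0(A)) = H^0(A)$ is module-finite over itself); hence Propositions~\ref{prop:gen} and~\ref{prop:chi} apply to it, and by Remark~\ref{rem:chi} condition~$\chi$ for the dg-algebra $H^0(A)$ agrees with Artin--Zhang's condition.

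For $M \in \Db(A)$ write $P_A(M)$ for the statement ``$H^j_\m(M)_i = 0$ for $i \gg 0$ and all $j \in \Z$'', and define $P_{H^0(A)}(N)$ analogously for $N \in \Db(H^0(A))$, using $\n$-local cohomology over $H^0(A)$. Since $\RR\Gamma_\m$ and $\RR\Gamma_\n$ are exact, the full subcategories of $\Db(A)$ and $\Db(H^0(A))$ cut out by $P_A$ and $P_{H^0(A)}$ are thick: closure under shifts and direct summands is immediate, and closure under cones follows from the long exact sequences of local cohomology attached to a distinguished triangle. Hence, by Proposition~\ref{prop:gen} applied to $A$ and to $H^0(A)$, it suffices to verify $P_A$ and $P_{H^0(A)}$ on finitely generated $H^0(A)$-modules. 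For such a module $N$: restriction along $A \onto H^0(A)$ puts $N$ in $\Db(A)$ (its cohomology is finite over $H(A)$ since $H(A) \onto H^0(A)$), and Proposition~\ref{prop:n} together with the module-level coincidence of $\n$-torsion over $A^0$ and over $H^0(A)$ gives $H^j_\m(N) \cong H^j_\n(N)$ for all $j$; therefore $P_A(N)$ holds if and only if $P_{H^0(A)}(N)$ does.

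Assembling these observations yields a chain of equivalences, where the outer two use the equivalence of conditions (1) and (4) in Proposition~\ref{prop:chi} (for $A$ and for $H^0(A)$ respectively), the second and fourth use the thick-subcategory argument together with Proposition~\ref{prop:gen}, and the middle one is the comparison of the previous paragraph:
\begin{align*}
\chi \text{ for } A
&\iff \bigl[\, P_A(M) \text{ for all } M \in \Db(A) \,\bigr] \\
&\iff \bigl[\, P_A(N) \text{ for all finitely generated } H^0(A)\text{-modules } N \,\bigr] \\
&\iff \bigl[\, P_{H^0(A)}(N) \text{ for all finitely generated } H^0(A)\text{-modules } N \,\bigr] \\
&\iff \bigl[\, P_{H^0(A)}(N) \text{ for all } N \in \Db(H^0(A)) \,\bigr] \\
&\iff \chi \text{ for } H^0(A).
\end{align*}

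I do not anticipate a serious obstacle; the argument is an assembly of results already established in Sections~\ref{sec:derived} and~\ref{sec:localcoh}. The points needing care are (i) confirming that each cited statement genuinely applies to the graded algebra $H^0(A)$ itself and not only to $A$, and (ii) the comparison of $\m$-torsion over $A$ with $\n$-torsion over $H^0(A)$: here it is essential to perform the reduction to $H^0(A)$-\emph{modules} first, since the identification of $\n$-torsion over $A^0$ with $\n$-torsion over $H^0(A)$ is available — and is needed — only at the level of modules, not for arbitrary complexes in $\Db(H^0(A))$.
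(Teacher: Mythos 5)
Your proof is correct and takes essentially the same route as the paper's: both directions are handled by reducing, via Proposition~\ref{prop:gen} and the local-cohomology characterization of $\chi$ in Proposition~\ref{prop:chi}, to checking the vanishing $H^j_\m(N)_i = 0$ for $i \gg 0$ on finitely generated $H^0(A)$-modules $N$, and then invoking Proposition~\ref{prop:n} to identify $\m$-local cohomology over $A$ with $\n$-local cohomology of $N$. The paper leaves implicit a couple of steps that you spell out explicitly — that the full subcategories cut out by the vanishing condition are thick (so the reduction to a classical generating set is legitimate), and that $H^0(A)$, viewed as a dg-algebra concentrated in cohomological degree zero, itself satisfies Setup~\ref{setup} so that Propositions~\ref{prop:gen} and~\ref{prop:chi} apply to it — but the underlying argument is the same.
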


\begin{proof}
Suppose condition $\chi$ holds for $A$. Let $M$ be a finitely generated $H^0(A)$-module. As discussed above, the module $M$ determines an object in $\Db(A)$ via restriction of scalars along the dg-algebra morphism $A \to H^0(A)$. By the equivalence of (1) and (4) in Proposition~\ref{prop:chi}, $H^j_\m(M)_i = 0$ for $i \gg 0$ and all $j \in \Z$. Recall that $\n$ denotes the homogeneous maximal ideal of $H^0(A)$; by Proposition~\ref{prop:n}, we have $H^*_\n(M) \cong H^*_\m(M)$. Thus, $H^j_\n(M)_i = 0$ for $i \gg 0$ and all $j \in \Z$. Since finitely generated $H^0(A)$-modules generate $\Db(H^0(A))$, Proposition~\ref{prop:chi} implies that condition $\chi$ holds for~$H^0(A)$. Conversely, say condition $\chi$ holds for $H^0(A)$. By Proposition~\ref{prop:gen}, we need only show $H^j_\m(M)_i = 0$ for $i \gg 0$ when $M$ is a finitely generated $H^0(A)$-module, which follows once again from \Cref{prop:n}.
\end{proof}

Given a left dg-$A$-module $M$ (i.e. a right dg-$A^{\op}$-module), we let $\RR\Gamma_{\m^{\op}}(M)$ denote its derived $\m$-torsion. If $M$ is a dg-$A$-$A$-bimodule, then both $\RR\Gamma_\m(M)$ and $\RR\Gamma_{\m^{\op}}(M)$ are also dg-$A$-$A$-bimodules~\cite[Remark 3.6]{serreduality_dgalgebras}.

\begin{thm}
\label{thm:op}
Let $M$ be a dg-$A$-$A$-bimodule such that $M\in \Db(A)$ and $M\in \Db(A^{\op})$. If $A$ and~$A^{\op}$ satisfy condition $\chi$, and $H^0(A)$ and $H^0(A)^{\op}$ have finite local cohomological dimension, then there is an isomorphism $\RR\Gamma_\m(M) \cong \RR\Gamma_{\m^{\op}}(M)$ in $\D(A^{\op} \otimes_\kk A)$. 
\end{thm}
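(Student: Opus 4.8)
The plan is to deduce this from the corresponding statement for ordinary graded algebras, namely the comparison of $\RR\Gamma_\m$ and $\RR\Gamma_{\m^{\op}}$ due to Van den Bergh~\cite[Theorem~6.3 and its proof]{VdB_dualizing}, applied to $H^0(A)$. The key reduction tools are already in place: by \Cref{prop:n}, computing $\RR\Gamma_\m(M)$ agrees with $\RR\Gamma_\n(M)$ over $A^0$, and similarly $\RR\Gamma_{\m^{\op}}(M)$ agrees with $\RR\Gamma_{\n^{\op}}(M)$; and by \Cref{thm:chi}, condition~$\chi$ for $A$ (resp.~$A^{\op}$) is equivalent to condition~$\chi$ for $H^0(A)$ (resp.~$H^0(A)^{\op}$). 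So the hypotheses transfer verbatim to the graded algebra $H^0(A)$ and its opposite.

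First I would use \Cref{prop:gen}: since $\Db(A)$ is classically generated by finitely generated $H^0(A)$-modules, and since both $\RR\Gamma_\m$ and $\RR\Gamma_{\m^{\op}}$ are exact functors of triangulated categories that commute with filtered colimits (the latter by \Cref{lem:colimits}, using $\lcd(H^0(A)), \lcd(H^0(A)^{\op}) < \infty$), it suffices to construct a natural isomorphism $\RR\Gamma_\m(M) \cong \RR\Gamma_{\m^{\op}}(M)$ for $M$ a finitely generated $H^0(A)$-bimodule — but one must be careful that $M$ here is a \emph{bimodule}, not just a one-sided module, so the reduction should be phrased as: it is enough to check the two functors agree after restricting along $A \to H^0(A)$ on both sides, i.e.\ to compare the derived $\m$-torsion and $\m^{\op}$-torsion of objects coming from $\Db(H^0(A) \otimes_\k H^0(A)^{\op})$. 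For such an $M$, \Cref{prop:n} (applied on each side) identifies $\RR\Gamma_\m(M)$ with $\RR\Gamma_\n(M)$ and $\RR\Gamma_{\m^{\op}}(M)$ with $\RR\Gamma_{\n^{\op}}(M)$ as complexes of $H^0(A)$-bimodules, and the desired isomorphism is then exactly Van den Bergh's balanced-torsion comparison for the Noetherian connected graded algebra $H^0(A)$.

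The main obstacle I anticipate is bookkeeping the bimodule structure through the reduction: \Cref{prop:gen} is a statement about $\Db(A)$ as a one-sided category, and $\RR\Gamma_\m$ vs.\ $\RR\Gamma_{\m^{\op}}$ live on opposite sides, so one cannot naively ``resolve $M$ as a one-sided module'' and keep track of the other action. The cleanest fix is probably to work in $\D(A^{\op} \otimes_\k A)$ throughout, observe that $A^{\op}\otimes_\k A$ is again a connected dg-algebra satisfying \Cref{setup} (its zeroth cohomology is $H^0(A)^{\op}\otimes_\k H^0(A)$, which is Noetherian since $\k$ is a field and both factors are), and run a dévissage on $\amp(M)$ together with \Cref{prop:gen} applied to this bigger algebra, reducing to $M$ a finitely generated $H^0(A)^{\op}\otimes_\k H^0(A)$-module; for such $M$ both sides are computed over ordinary graded algebras and Van den Bergh's theorem applies directly. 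A secondary technical point is to confirm that the isomorphism $\RR\Gamma_\m(M)\cong\RR\Gamma_\n(M)$ of \Cref{prop:n} is compatible with the residual one-sided action, so that gluing the two applications of \Cref{prop:n} with Van den Bergh's bimodule-level comparison yields an isomorphism in $\D(A^{\op}\otimes_\k A)$ and not merely in $\D(\k)$.
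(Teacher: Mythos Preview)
Your proposal has a genuine gap: the claim that $H^0(A)^{\op}\otimes_\k H^0(A)$ is Noetherian ``since $\k$ is a field and both factors are'' is false in general for noncommutative algebras. The paper itself flags this explicitly in its proof (``$H^0(A^e)$ need not be Noetherian \ldots\ and so we have left the context of Setup~\ref{setup} only in this proof''). Consequently $A^e = A^{\op}\otimes_\k A$ need not satisfy Setup~\ref{setup}, \Cref{prop:gen} does not apply to $\Db(A^e)$, and your d\'evissage on $\amp(M)$ inside $\D(A^e)$ has no foundation. This blocks the entire reduction-to-$H^0(A)$-bimodules strategy. A related secondary issue you already worried about is real: \Cref{prop:n} only asserts an isomorphism in $\D(A^0)$, so even for a single $H^0(A)$-bimodule there is no immediate reason the identification $\RR\Gamma_\m(M)\cong\RR\Gamma_\n(M)$ respects the opposite-side $A$-action; without that, you cannot glue the two one-sided applications into an isomorphism in $\D(A^{\op}\otimes_\k A)$.

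The paper's proof avoids both problems by working entirely at the dg level, never reducing to $H^0(A)$-bimodules. It shows directly, via adjunction and \Cref{lem:colimits}, that $\RR\Gamma_{\m^e}(M)\cong\RR\Gamma_{\m^{\op}}(\RR\Gamma_\m(M))$ and, by symmetry, $\RR\Gamma_{\m^e}(M)\cong\RR\Gamma_\m(\RR\Gamma_{\m^{\op}}(M))$ in $\D(A^e)$. Condition~$\chi$ (via the equivalence (1)$\Leftrightarrow$(4) of \Cref{prop:chi}) then forces $\RR\Gamma_\m(M)\in\D_{\Tors}(A^{\op})$ and $\RR\Gamma_{\m^{\op}}(M)\in\D_{\Tors}(A)$, so \Cref{lem:counit} strips off the outer torsion functors $A^e$-linearly and yields $\RR\Gamma_\m(M)\cong\RR\Gamma_{\m^{\op}}(M)$ in $\D(A^e)$. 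This is the dg-analogue of Van den Bergh's argument rather than a reduction to it.
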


\begin{proof}
Set $A^e \ce A^{\op} \otimes_\kk A$, and let $\RR\Gamma_{\m^e}(-):\D(A^e)\to \D(A^e)$ denote the associated derived torsion functor. We caution the reader that  $H^0(A^e)$ need not be Noetherian (this can occur even when $A = A^0$), and so we have left the context of Setup~\ref{setup} only in this proof. However, the definition of derived torsion still makes sense for $A^e$. 

For $N\in \D(A)$, the functor $- \otimes_\kk N \co \D(A^{\op}) \to \D(A^e)$ admits a right adjoint  $\RuHom_A(N,-)$. We have the following natural isomorphisms in $\D(A^e)$:
\begin{align*}
\RR\Gamma_{\m^e}(M) &\cong \underset{d \to \infty}{\colim}\text{ } \RuHom_{A^e}(A^e / A^e_{\ge d}, M)\\
& \cong \underset{d \to \infty}{\colim}\text{ } \RuHom_{A^e}(A^{\op} / A^{\op}_{\ge d}\otimes_{\kk} A/A_{\ge d}, M) \\
& \cong \underset{d \to \infty}{\colim}\text{ } \RuHom_{A^{\op}}(A^{\op} / A^{\op}_{\ge d}, \RuHom_A(A/A_{\ge d}, M)) \\
& \cong \underset{d \to \infty}{\colim} \text{ }\underset{e \to \infty}{\colim}\text{ } \RuHom_{A^{\op}}(A^{\op} / A^{\op}_{\ge d}, \RuHom_A(A/A_{\ge e}, M)) \\
& \cong \underset{d \to \infty}{\colim}\text{ } \RuHom_{A^{\op}}(A^{\op} / A^{\op}_{\ge d}, \underset{e \to \infty}{\colim} \text{ }\RuHom_A(A/A_{\ge e}, M))\\
& \cong \RR\Gamma_{\m^{\op}}(\RR\Gamma_{\m}(M)),
\end{align*}
where the fifth isomorphism follows from \Cref{lem:colimits}. By symmetry, we also have an isomorphism $\RR\Gamma_{\m^e}(M)\cong \RR\Gamma_{\m}( \RR\Gamma_{\m^{\op}}(M))$ in $\D(A^e)$, and so we have $\RR\Gamma_{\m^{\op}}(\RR\Gamma_{\m}(M)) \cong \RR\Gamma_{\m}( \RR\Gamma_{\m^{\op}}(M))$ in $\D(A^e)$. It follows from the equivalence of (1) and (4) in \Cref{prop:chi} that $\RR\Gamma_{\m}(M)\in \D_{\Tors}(A^{\op})$ and $\RR\Gamma_{\m^{\op}}(M)\in \D_{\Tors}(A)$. Since the counit maps $\RR\Gamma_{\m}(\RR\Gamma_{\m^{\op}}(M)) \to \RR\Gamma_{\m^{\op}}(M)$ and $\RR\Gamma_{\m^{\op}}(\RR\Gamma_{\m}(M)) \to \RR\Gamma_{\m}(M)$ are $A^e$-linear~\cite[Remark 3.6]{serreduality_dgalgebras},  \Cref{lem:counit} finishes the proof.
\end{proof}

\subsection{Balanced dualizing dg-modules}
\label{sec:balanced}

Given a dg-$A$-$A$-bimodule $R$, a dg-$A$-module $M$ is called \emph{$R$-reflexive} if the natural map
$
M \to \RuHom_{A^{\op}}(\RuHom_A(M, R), R)
$
is a quasi-isomorphism. 

\begin{defn}[\cite{serreduality_dgalgebras} Definitions 3.8 and 3.13]
 A dg-$A$-$A$-bimodule $R$ is called \emph{dualizing} if, for any $M \in \Db(A^{\op})$ and $N \in \Db(A)$, the following conditions are satisfied:
\begin{enumerate}

\item We have $\RuHom_{A^{\op}}(M, R) \in \Db(A)$, and $\RuHom_A(N, R) \in \Db(A^{\op})$.
\item The dg-$A$-modules $N$ and $N \otimes_A R$, and dg-$A^{\op}$-modules  $M$ and $M \otimes_{A^{\op}} R$, are $R$-reflexive.
\end{enumerate}
A \emph{balanced dualizing dg-$A$-module} is a dualizing dg-$A$-module $R$ such that there are isomorphisms $\RR\Gamma_{\m}(R) \cong A^* \cong \RR\Gamma_{\m^{\op}}(R)$ in $\D(A^{\op} \otimes_\k A)$.
\end{defn}

As discussed in the introduction, the existence of a balanced dualizing dg-module for $A$ yields a notion of Serre duality for the noncommutative space associated to $A$: see Section~\ref{sec:serre} for details. By~\cite[Proposition 3.15]{serreduality_dgalgebras}, if $\dim_\k H(A) <~\infty$, then $R = A^*$ is a balanced dualizing dg-module for $A$. When $A$ is graded commutative (i.e. $xy = (-1)^{|x||y|}yx$ for all homogeneous $x, y \in A$) and Gorenstein~(\Cref{def:gorenstein}), it is proven in~\cite[Theorem 3.18]{serreduality_dgalgebras} that $R = A(-a)[n]$ is a balanced dualizing dg-module for $A$, where $a$ and $n$ are as in Definition~\ref{def:gorenstein}. 



We now state our main theorem, which is a more precise formulation of Theorem~\ref{thm:intro}. It characterizes what it means for a dg-algebra as in Setup~\ref{setup} to have a balanced dualizing dg-module, generalizing a theorem of Van den Bergh~\cite[Theorem 6.3]{VdB_dualizing}. It also provides a much larger family of examples of dg-algebras with balanced dualizing dg-modules: see Corollary~\ref{cor:A0}. 

\begin{thm}
\label{thm:vdb}
 If $A$ has a balanced dualizing dg-module $R$, then there is an isomorphism $R \cong \RR\Gamma_\m(A)^*$ in $\D(A^{\op} \otimes_\k A)$, and we have the following:
 \begin{enumerate}
 \item $H^0(A)$ and $H^0(A)^{\op}$ satisfy condition $\chi$.
 \item $\lcd(H^0(A))<\infty$ and $\lcd(H^0(A)^{\op})<\infty$ (see \Cref{def:lcd_classical}).
 \end{enumerate}
Conversely, if (1) and~(2) hold, then $\RR\Gamma_\m(A)^*$ is a balanced dualizing dg-module for $A$. 
\end{thm}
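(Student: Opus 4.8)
The plan is to follow the structure of Van den Bergh's argument~\cite[Theorem 6.3]{VdB_dualizing}, using the technical machinery developed in Section~\ref{sec:localcoh} to reduce essentially everything to $H^0(A)$ and its opposite. For the forward direction, suppose $R$ is a balanced dualizing dg-$A$-module. First I would identify $R$: applying $(-)^*$ to the balancing isomorphism $\RR\Gamma_\m(R)\cong A^*$ and using Lemma~\ref{lem:matlis} (Matlis duality), together with the local-duality-type identification of $\RR\Gamma_\m$ with $\RuHom$ against the torsion complex, one shows $R\cong\RR\Gamma_\m(A)^*$ in $\D(A^{\op}\otimes_\k A)$; the symmetric balancing isomorphism $\RR\Gamma_{\m^{\op}}(R)\cong A^*$ gives the same conclusion on the other side, so this is consistent. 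Next, to get condition~$\chi$ for $H^0(A)$: since $R$ is dualizing, for $M\in\Db(A)$ we have $\RuHom_A(M,R)\in\Db(A^{\op})$, and combining with the balancing property and adjunction between $\RuHom$ and $\otimes$ one obtains that $\RR\Gamma_\m(M)$ has finitely generated cohomology in each internal degree and vanishes in high internal degree—precisely characterization (4) of condition~$\chi$ in Proposition~\ref{prop:chi}. Then Theorem~\ref{thm:chi} transfers condition~$\chi$ to $H^0(A)$, and running the argument with $A^{\op}$ (using that $R$ is also a balanced dualizing dg-$A^{\op}$-module, which follows from the symmetry built into the definition) gives it for $H^0(A)^{\op}$.

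For the finiteness of local cohomological dimension in the forward direction, I would again reduce to $H^0(A)$ via Proposition~\ref{prop:n}: $\RR\Gamma_\m(M)\cong\RR\Gamma_\n(M)$ in $\D(A^0)$ for $M\in\D(A)$. The dualizing property of $R$ bounds the amplitude of $\RuHom_A(-,R)$ on $\Db(A)$, hence—via the identification $R\cong\RR\Gamma_\m(A)^*$ and reflexivity—bounds the cohomological range in which $\RR\Gamma_\m$ can be nonzero on objects of $\Db(A)$; feeding in finitely generated $H^0(A)$-modules (which generate $\Db(A)$ by Proposition~\ref{prop:gen}) and using Proposition~\ref{prop:n} shows $H^i_\n(N)=0$ for $i$ above a fixed bound and all finitely generated graded $H^0(A)$-modules $N$, which by the usual colimit argument extends to all graded modules, i.e. $\lcd(H^0(A))<\infty$; symmetrically for the opposite.

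For the converse, assume (1) and (2). By Theorem~\ref{thm:chi}, condition~$\chi$ holds for $A$ and for $A^{\op}$. Set $R\ce\RR\Gamma_\m(A)^*$. The strategy is: (a) check $R$ is a dg-$A$-$A$-bimodule with $R\in\Db(A)$ and $R\in\Db(A^{\op})$—finiteness of cohomology comes from Proposition~\ref{prop:chi}(5) applied to $M=A$, boundedness from $\lcd<\infty$ plus Matlis duality; (b) verify the balancing isomorphisms $\RR\Gamma_\m(R)\cong A^*\cong\RR\Gamma_{\m^{\op}}(R)$: here I would use Theorem~\ref{thm:op}, which under exactly hypotheses (1) and (2) gives $\RR\Gamma_\m(R)\cong\RR\Gamma_{\m^{\op}}(R)$ as $A^e$-modules, together with the computation $\RR\Gamma_\m(\RR\Gamma_\m(A)^*)\cong\RR\Gamma_\m(A)^{**}\cong\cdots\cong A^*$ obtained by dualizing and using that $\RR\Gamma_\m$ of a torsion complex is that complex (Lemma~\ref{lem:counit}) together with Lemma~\ref{lem:matlis}; and (c) establish the two dualizing conditions (finiteness of $\RuHom$ against $R$, and $R$-reflexivity of $N$ and $N\otimes_A R$). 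Step (c) is where I expect the main obstacle: one wants to show the natural map $M\to\RuHom_{A^{\op}}(\RuHom_A(M,R),R)$ is a quasi-isomorphism for all $M\in\Db(A)$. The cleanest route is to first verify it for $M=\k$—using $\RuHom_A(\k,R)\cong\RuHom_A(\k,\RR\Gamma_\m(A)^*)$, Matlis duality (Lemma~\ref{lem:matlis}), and the identity $\RR\Gamma_\m(A)^*\simeq\RuHom_A(A/A_{\ge d},A)^*$-style local duality isomorphisms of Lemma~\ref{lem:localcoh}(2)—then bootstrap to all finitely generated $H^0(A)$-modules via dévissage on amplitude (as in Proposition~\ref{prop:gen}), and finally to all of $\Db(A)$ by Proposition~\ref{prop:gen} since the reflexive objects form a thick subcategory. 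The reflexivity of $N\otimes_A R$ and the analogous statements for $A^{\op}$ follow by the symmetric argument together with the bimodule balancing already established. Throughout, the interplay between the two sides is controlled precisely by Theorem~\ref{thm:op}, which is why hypothesis (2) is needed and not merely condition~$\chi$.
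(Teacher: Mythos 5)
Your forward direction is in the right spirit and matches the paper's, though you are a bit vague about the $\lcd$ bound (the paper cites specific results in the Serre duality paper for this). The converse, however, has two genuine gaps.

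First, your proposed verification of the balancing isomorphism is not correct as written. You claim a computation of the form $\RR\Gamma_\m(\RR\Gamma_\m(A)^*)\cong\RR\Gamma_\m(A)^{**}\cong A^*$, but $\RR\Gamma_\m$ does not interact with $(-)^*$ this way: $\RR\Gamma_\m(A)^*$ is generally \emph{not} torsion (for instance $\k[x]^{**}=\k[x]$), so neither Lemma~\ref{lem:counit} nor Matlis duality produces the first isomorphism in your chain. What is actually needed, and what the paper proves separately as Proposition~\ref{AsimeqRHomRR}, is the isomorphism $\RuHom_A(R,R)\cong A$; then local duality (Corollary~\ref{cor:LD}) gives $\RR\Gamma_\m(R)^*\cong\RuHom_A(R,R)\cong A$, whence $\RR\Gamma_\m(R)\cong A^*$. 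Without that intermediate endomorphism calculation (which is itself nontrivial and relies on Theorem~\ref{thm:op} plus Corollary~\ref{cor:LD}), the balancing does not go through.

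Second, your reflexivity argument by dévissage from $M=\k$ cannot succeed. A finitely generated $H^0(A)$-module concentrated in cohomological degree $0$ has amplitude $0$, so amplitude-induction gives you nothing, and such a module is not built from $\k$ by finitely many triangles in $\Db(A)$ (its free resolution over $H^0(A)$ is typically infinite). So the passage from $\k$ to the generating class of Proposition~\ref{prop:gen} is an unbridged gap. The paper avoids this entirely by writing down a direct chain of isomorphisms
\[
\RuHom_A(\RuHom_{A^{\op}}(M,R),R)\cong\RuHom_A(\RR\Gamma_{\m^{\op}}(M)^*,R)\cong\cdots\cong (M^*)^*\cong M
\]
valid for \emph{every} $M\in\Db(A^{\op})$ at once, using Corollary~\ref{cor:LD}, Lemma~\ref{lem:matlis}, Proposition~\ref{prop:fg}, Theorem~\ref{thm:op}, and Lemma~\ref{lem:counit}. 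Moreover, you never address the point that one must verify the resulting isomorphism is the \emph{canonical} evaluation map (not merely some abstract isomorphism), which the paper handles by reducing to $M=A$ after a semifree replacement and passing to colimits. These are the essential steps you would need to supply before the converse is complete.
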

 
Theorem~\ref{thm:vdb} implies that balanced dualizing dg-modules are unique up to isomorphism in $\D(A^{\op} \otimes_\k A)$. Before proving Theorem~\ref{thm:vdb}, we establish some intermediate technical results. The first is a generalization of \cite[Proposition 2.1]{jorgensen97}:

\begin{prop}
\label{prop:tensor}
Suppose $H^0(A)$ has finite local cohomological dimension, and let $M \in \D(A)$. 
\begin{enumerate}
\item There is an isomorphism
$
M \otimes_A^\LL \RR\Gamma_\m(A) \xra{\cong} \RR\Gamma_\m(M)
$ in $\D(A)$.
\item If $M \in \D(A^{\op} \otimes_\k A)$, then the isomorphism in (1) may be chosen to be $A^{\op} \otimes_\k A$-linear. 
\end{enumerate}
\end{prop}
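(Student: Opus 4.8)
\textbf{Proof proposal for Proposition~\ref{prop:tensor}.}

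The plan is to reduce to the case where $M$ is a cohomologically bounded-below dg-module and then exhibit the isomorphism by a colimit argument, using the Koszul-type perfect resolutions of $A/A_{\ge d}$ together with \Cref{lem:colimits}. First I would note that, for each fixed $d$, the dg-$A$-module $A/A_{\ge d}$ admits a minimal semifree resolution $F_d$ as in \cite[Proposition 2.16]{BROWN2025110035} with $F_d^{\ge j}$ perfect for all $j \le 0$; since $A/A_{\ge d}$ is cohomologically concentrated in degrees $\le 0$ (indeed it is built out of finitely many copies of $\k$ in each internal degree $<d$), one can arrange that the relevant truncations are perfect dg-$A$-modules. The key point is that for a \emph{perfect} dg-$A$-module $P$ there is a natural evaluation isomorphism
$$
M \otimes_A^\LL \RuHom_A(P, A) \xra{\cong} \RuHom_A(P, M)
$$
in $\D(A)$, functorial in $M$ and in $P$; this is the standard ``perfect objects are dualizable'' statement. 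Taking $P = A/A_{\ge d}$ (after replacing it by $F_d$, which is perfect once we truncate, and then controlling the truncation error as in the proof of \Cref{lem:colimits}) and passing to the filtered colimit over $d$, the left-hand side becomes $M \otimes_A^\LL \RR\Gamma_\m(A)$ because $-\otimes_A^\LL -$ commutes with colimits in each variable, and the right-hand side becomes $\RR\Gamma_\m(M)$ by definition; here is exactly where finiteness of $\lcd(H^0(A))$ enters, via \Cref{lem:colimits}, to guarantee that $\RR\Gamma_\m$ itself commutes with the filtered colimit so that the colimit of the $\RuHom_A(A/A_{\ge d}, M)$ computes $\RR\Gamma_\m(M)$ even for unbounded $M$. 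For general $M \in \D(A)$ (not bounded below), I would first apply the argument to the smart truncations $\sigma^{\ge t}M$, then invoke \Cref{lem:colimits} and the identification of $\RR\Gamma_\m(M)$ with the derived limit (or, more simply, run the colimit-over-$d$ and truncation-over-$t$ arguments simultaneously as in the proof of \Cref{lem:colimits}).

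For part (2), the point is purely that every construction used in part (1) is functorial and can be performed $A^e$-linearly, where $A^e = A^{\op}\otimes_\k A$: the resolutions $F_d$ of $A/A_{\ge d}$ can be taken as complexes of $A$-bimodules (or one uses the bar-type bimodule resolution), the evaluation map $M \otimes_A^\LL \RuHom_A(P,A) \to \RuHom_A(P,M)$ is natural in the $A^e$-module $M$ when $P$ is a perfect $A$-bimodule, and filtered colimits are computed in $\D(A^e)$ compatibly with the forgetful functor to $\D(A)$. So the isomorphism of part (1) lifts to $\D(A^e)$; one should remark, as in the proof of \Cref{thm:op}, that although $H^0(A^e)$ need not be Noetherian, the derived torsion functor and the colimit descriptions still make sense, so the argument goes through verbatim.

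The main obstacle I anticipate is the bookkeeping around perfectness and truncation: $A/A_{\ge d}$ is not itself a perfect dg-$A$-module, only its ``good'' truncations $F_d^{\ge s}$ are, so one must check that replacing $A/A_{\ge d}$ by $F_d^{\ge s}$ (for $s \ll 0$ depending on the cohomological amplitude in play) does not change either $M \otimes_A^\LL \RuHom_A(-,A)$ or $\RuHom_A(-,M)$ in the relevant range of cohomological degrees, and that these replacements are compatible as $d$ and $s$ vary so that the colimits match up. This is exactly the kind of estimate carried out in the proof of \Cref{lem:colimits}, and I would model the argument on it: bound things cohomological-degree by cohomological-degree, use that $F_d^{\ge s}$ is compact, and only at the end pass to the colimit. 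The evaluation isomorphism for perfect objects and the commutation of $\otimes^\LL$ with colimits are formal, so the real content is this compatibility, together with the already-established \Cref{lem:colimits}, which does the heavy lifting of handling unbounded $M$.
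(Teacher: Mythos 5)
Your plan attacks the wrong intermediate claim. You want to show that the evaluation map $M \otimes_A^\LL \RuHom_A(A/A_{\ge d}, A) \to \RuHom_A(A/A_{\ge d}, M)$ is an isomorphism for each fixed $d$ (after truncating the resolution $F_d$ of $A/A_{\ge d}$ to make it perfect), and then pass to the colimit over $d$. But this intermediate statement is false in general for fixed $d$: $A/A_{\ge d}$ is not compact in $\D(A)$, so $\RuHom_A(A/A_{\ge d}, -)$ does not commute with filtered colimits, and the evaluation map fails to be an isomorphism for, say, $M$ a filtered colimit of perfect dg-modules. The truncation fix does not repair this: $F_d^{\ge s}$ is a genuinely different object of $\D(A)$ from $F_d \simeq A/A_{\ge d}$ (it is not a resolution of anything, and $\uHom_A(F_d, A)$ is \emph{not} the colimit over $s$ of $\uHom_A(F_d^{\ge s}, A)$ --- the latter consists only of maps factoring through a finite truncation). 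So the ``controlling the truncation error as in the proof of Lemma~\ref{lem:colimits}'' step is where the argument breaks; that lemma estimates Hom out of the truncations, but here you would also need compatible control on the tensor side, where limits and colimits behave oppositely, and this is not worked out. Relatedly, your explanation of where $\lcd(H^0(A)) < \infty$ enters is off: the colimit over $d$ of $\RuHom_A(A/A_{\ge d}, M)$ \emph{is} $\RR\Gamma_\m(M)$ by definition, with no hypothesis needed. What does require $\lcd < \infty$ is a colimit in the $M$-variable, not the $d$-variable.

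The paper's proof inverts the roles of the two variables and thereby avoids the issue entirely. It defines the map $\alpha \co M \otimes_A \RR\Gamma_\m(A) \to \RR\Gamma_\m(M)$ once and for all as the colimit over $d$ of the natural maps $\varphi_d \co M \otimes_A \uHom_A(F_d, A) \to \uHom_A(F_d, M)$, making no claim that any individual $\varphi_d$ is a quasi-isomorphism. It then checks $\alpha$ is a quasi-isomorphism when $M$ is a twist/shift of $A$ (where $\varphi_d$ is the identity), inducts on rank to get the case of perfect $M$, and finally writes a semifree resolution of $M$ as a filtered colimit of perfect dg-modules. Since $- \otimes_A \RR\Gamma_\m(A)$ commutes with filtered colimits automatically, the only nontrivial point is that $\RR\Gamma_\m$ commutes with filtered colimits in the $M$-variable, and this is exactly Lemma~\ref{lem:colimits}, which is where $\lcd(H^0(A)) < \infty$ is used. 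Your instinct to perform resolutions $A^e$-linearly for part (2) matches the paper; but for part (1) you need to switch from ``perfectness of $A/A_{\ge d}$'' (which fails) to ``perfectness of pieces of $M$'' (which works, and which Lemma~\ref{lem:colimits} is actually designed to serve).
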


\begin{proof}
Let $F$ be a semifree resolution of $M$. We may assume $F = M$. For all $d \ge 0$, choose a semifree resolution $F_d$ of $A / A_{\ge d}$ as an $A^{\op} \otimes_\k A$-module. Each $F_d$ is also a semifree resolution over~$A$. For each $d \ge 0$, there is a morphism $\varphi_d \co M \otimes_A  \uHom_A(F_d, A) \to \uHom_A(F_d, M)$ of dg-$A$-modules that sends $y \otimes \varphi  \in M \otimes_A \uHom_A(F_d, A) $ to the map $F_d \to M$ given by $x \mapsto y\varphi_d(x)$. For $d \le d'$, we have maps $F_{d'} \to F_d$ induced by the surjections $A/A_{\ge d'} \to A/A_{\ge d}$, yielding a commutative square
$$
\xymatrix{
 M\otimes_A \uHom_A(F_d, A)  \ar[r]^-{\varphi_d} \ar[d] & \uHom_A(F_d, M) \ar[d]\\
M\otimes_A \uHom_A(F_{d'}, A) \ar[r]^-{\varphi_{d'}} & \uHom_A(F_{d'}, M). 
}
$$
Passing to colimits, we obtain a map $\a \co M \otimes_A \RR\Gamma_\m(A)   \to \RR\Gamma_\m(M)$. The map $\a$ is natural in $M$, and it is a quasi-isomorphism when the rank of $M$ as a free $A$-module is 1. It follows by induction on the rank of $M$ that $\a$ is a quasi-isomorphism when $M$ is a perfect object \cite[Definition 2.14]{BROWN2025110035}. Since $M$ is a filtered colimit of perfect objects, applying \Cref{lem:colimits} finishes the proof of (1). To prove (2), choose $F$ to be a semifree resolution of $M$ over $A^{\op}\otimes_\k A$ and proceed as in~(1). 
\end{proof}


As a consequence of Proposition~\ref{prop:tensor}, we obtain the following extension of \cite[Theorem 5.1(2)]{VdB_dualizing} (see also \cite[Theorem 2.3]{jorgensen97}), which is a variant of the local duality theorem for differential bigraded algebras given in \cite[Theorem 3.11]{serreduality_dgalgebras}. 

\begin{cor}
\label{cor:LD}
Suppose that $H^0(A)$ has finite local cohomological dimension. For $M \in \D(A)$, we have $\RR\Gamma_{\m}(M)^* \xra{\cong} \RR\uHom_A(M,\dGamma_{\m}(A)^*)$ in $\D(A^{\op})$. If $M$ is a dg-$A$-$A$-bimodule, then this isomorphism may be chosen to be $A^{\op}\otimes_\k A$-linear. 
\end{cor}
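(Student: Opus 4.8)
The plan is to derive Corollary~\ref{cor:LD} from Proposition~\ref{prop:tensor} by a standard adjunction (tensor-Hom) argument, combined with the duality functor $(-)^*=\uHom_\k(-,\k)$. Since $H^0(A)$ has finite local cohomological dimension, Proposition~\ref{prop:tensor}(1) gives a natural isomorphism $M \otimes_A^\LL \RR\Gamma_\m(A) \xra{\cong} \RR\Gamma_\m(M)$ in $\D(A)$, and applying $(-)^*$ to this yields
$$
\RR\Gamma_\m(M)^* \cong \bigl(M \otimes_A^\LL \RR\Gamma_\m(A)\bigr)^*
$$
in $\D(A^{\op})$. The point is then to identify the right-hand side with $\RR\uHom_A(M, \RR\Gamma_\m(A)^*)$, which is precisely the derived version of the adjunction isomorphism $\uHom_\k(M \otimes_A N, \k) \cong \uHom_A(M, \uHom_\k(N, \k))$ for a right $A$-module $M$ and a left $A$-module $N$; here one takes $N = \RR\Gamma_\m(A)$ (viewed as a left dg-$A$-module via the bimodule structure). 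Concretely, I would replace $M$ by a semifree resolution $F$ over $A$, note that $F \otimes_A \RR\Gamma_\m(A)$ then computes the derived tensor product, and verify that the evident chain-level map $\uHom_\k(F \otimes_A \RR\Gamma_\m(A), \k) \to \uHom_A(F, \uHom_\k(\RR\Gamma_\m(A), \k))$ is an isomorphism of dg-$A^{\op}$-modules (it is an isomorphism, not merely a quasi-isomorphism, at the level of the underlying bigraded modules, since $F$ is semifree); this gives the desired quasi-isomorphism after passing to $\D(A^{\op})$.

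For the bimodule refinement, I would instead take $F$ to be a semifree resolution of $M$ over $A^{\op} \otimes_\k A$, as in the proof of Proposition~\ref{prop:tensor}(2). With this choice, the isomorphism $M \otimes_A^\LL \RR\Gamma_\m(A) \xra{\cong} \RR\Gamma_\m(M)$ is $A^{\op}\otimes_\k A$-linear by Proposition~\ref{prop:tensor}(2), and the functors $(-)^*$ and $\RR\uHom_A(-, \RR\Gamma_\m(A)^*)$ both carry dg-$A$-$A$-bimodules to dg-$A$-$A$-bimodules (using that $\RR\Gamma_\m(A)$ is a bimodule, so $\RR\Gamma_\m(A)^*$ is as well). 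One then checks that the adjunction isomorphism above respects the extra $A$-action, which is routine since it is induced by the obvious evaluation maps and all the relevant actions are ``on the outside.''

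The only subtlety I anticipate is bookkeeping: making sure that $\RR\Gamma_\m(A)$ has a well-defined class in $\D(A^{\op}\otimes_\k A)$ that is simultaneously a left and right dg-$A$-module in a compatible way (this is already recorded in the discussion preceding Theorem~\ref{thm:op} and in \cite[Remark 3.6]{serreduality_dgalgebras}), and that the tensor-Hom adjunction used is the correct internal-Hom version so that the output lands in $\D(A^{\op})$ rather than merely computing a Hom-set. There is also the usual sign/shift vigilance required when commuting $(-)^*$ past the bigrading, but nothing essentially hard. The main engine is Proposition~\ref{prop:tensor}; once that is in hand, Corollary~\ref{cor:LD} is a formal consequence of adjunction and the fact that semifree resolutions make derived tensor products computable at the chain level.
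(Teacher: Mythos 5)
Your proof is correct and follows exactly the same route as the paper's: apply Proposition~\ref{prop:tensor} to obtain $\RR\Gamma_\m(M)^* \cong (M \otimes_A^\LL \RR\Gamma_\m(A))^*$, then the tensor--Hom adjunction against $(-)^* = \uHom_\k(-,\k)$, with the bimodule refinement via Proposition~\ref{prop:tensor}(2). You have simply unwound the word ``adjunction'' that the paper leaves implicit.
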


\begin{proof}
We have isomorphisms $\RR\Gamma_{\m}(M)^* \xra{\cong} (M \otimes_A^\LL \RR\Gamma_\m(A) )^* \xra{\cong} \RR\uHom_A(M,\dGamma_{\m}(A)^*)$ in $\D(A^{\op})$, where the first follows from Proposition~\ref{prop:tensor} and the second from adjunction. This proves the first statement, and the second follows from Proposition~\ref{prop:tensor}(2). 
\end{proof}

\begin{prop}
\label{prop:fg}
Assume $H^0(A)$ satisfies condition $\chi$. If $M \in \Db(A)$, then $H^j_{\m}(M)^*$ is finitely generated over $H^0(A)^{\op}$ for all $j \in \Z$. In particular, $\dim_\k H^j_{\m}(M)_i < \infty$ for all $i, j \in \Z$. 
\end{prop}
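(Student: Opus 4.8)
The plan is to reduce the statement to the case $M = N$ a finitely generated $H^0(A)$-module via the generation result of Proposition~\ref{prop:gen}, and then handle that case using local duality. First I would observe that the collection of $M \in \Db(A)$ for which the conclusion holds is closed under cohomological shifts and under triangles: if two of three terms in a triangle satisfy the finiteness property, the long exact sequence in local cohomology (which is exact and natural since $\RR\Gamma_\m$ is triangulated) together with the fact that finitely generated modules over the Noetherian ring $H^0(A)^{\op}$ form a thick subcategory gives it for the third. Since Proposition~\ref{prop:gen} says $\Db(A)$ is classically generated by finitely generated $H^0(A)$-modules, it suffices to treat $M = N$ such a module. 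Here I should be a little careful: classical generation allows direct summands, but summands of finitely generated $H^0(A)^{\op}$-modules are again finitely generated (Noetherian), so this causes no trouble.

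For the case of a finitely generated $H^0(A)$-module $N$, the idea is: by Proposition~\ref{prop:n} we have $\RR\Gamma_\m(N) \cong \RR\Gamma_\n(N)$ in $\D(A^0)$, where $\n$ is the homogeneous maximal ideal of $H^0(A)$, and $\RR\Gamma_\n(N)$ is computed entirely over the Noetherian graded ring $H^0(A)$. Thus $H^j_\m(N) = H^j_\n(N)$ as graded $H^0(A)$-modules. Now condition $\chi$ for $H^0(A)$ is exactly Artin--Zhang's condition $\chi$ by Remark~\ref{rem:chi}, and by Proposition~\ref{prop:chi} (equivalence of (1) and (5)) applied to $H^0(A)$ — or directly by Artin--Zhang's original results — each truncation $H^j_\n(N)_{\ge i}$ is a finitely generated $H^0(A)$-module. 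Combined with the fact that $H^j_\n(N)_i = 0$ for $i \ll 0$ (local cohomology of a finitely generated module is bounded below in internal degree), we get that each graded piece $H^j_\n(N)_i$ is finite-dimensional over $\k$, and that $H^j_\n(N)$ vanishes in internal degrees $i \gg 0$ by condition $\chi$ (Proposition~\ref{prop:chi}(4)). Hence $H^j_\m(N)$ is bounded in internal degree with finite-dimensional graded pieces, so its graded $\k$-dual $H^j_\m(N)^*$ is again bounded, finite-dimensional in each degree — and finitely generated over $H^0(A)^{\op}$: the Matlis-type dual of a finitely generated torsion $H^0(A)$-module with appropriate boundedness is finitely generated over $H^0(A)^{\op}$, which is the content of the graded Matlis duality underlying Artin--Zhang's framework (this is where I would cite \cite{AZ} or \cite{VdB_dualizing} directly, or Lemma~\ref{lem:matlis} applied over $H^0(A)$).

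The "in particular" clause is then immediate: $\dim_\k H^j_\m(M)_i = \dim_\k (H^j_\m(M)^*)_{-i}$, and a finitely generated graded module over the Noetherian ring $H^0(A)^{\op}$ (whose degree-zero piece is $\k$) has finite-dimensional graded components. I expect the main obstacle to be the final finiteness assertion — that $H^j_\m(N)^*$ is finitely generated over $H^0(A)^{\op}$, not merely locally finite — since this requires the graded Matlis duality statement for $H^0(A)$ relating finitely generated torsion modules on one side to finitely generated modules on the opposite side under condition $\chi$; everything else is formal dévissage. One subtlety to double-check is that condition $\chi$ for $H^0(A)$ (as opposed to $H^0(A)^{\op}$) is genuinely what is needed here — local cohomology is taken with respect to $\m$ acting on the right, $\chi$ on $H^0(A)$ controls $\Ext^j(\k, -)$ on the right, and the dual then becomes a left, i.e. $H^0(A)^{\op}$, module — so the asymmetry in the hypothesis is exactly right.
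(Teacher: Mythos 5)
The reduction step (dévissage via Proposition~\ref{prop:gen} to the case of a finitely generated $H^0(A)$-module $N$, using that finitely generated $H^0(A)^{\op}$-modules form a thick subcategory and that $\RR\Gamma_\m$ is triangulated) and the replacement of $\m$ by $\n$ via Proposition~\ref{prop:n} are correct and follow the paper's own route. The problem is in the remaining step, where there is a genuine error.

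You assert that ``local cohomology of a finitely generated module is bounded below in internal degree,'' and use this together with condition $\chi$ (which gives vanishing in degrees $\gg 0$ and finite-dimensional graded pieces) to conclude that $H^j_\n(N)$ is finite-dimensional over $\k$, so that its $\k$-dual is trivially finitely generated. But local cohomology of a finitely generated module is emphatically \emph{not} bounded below in general: already for $S = \k[x]$ one has $H^1_\n(S) \cong \k[x,x^{-1}]/\k[x]$, which is nonzero in every negative degree. So the intermediate conclusion of finite total dimension is false, and the argument collapses at the point where it matters most. Indeed, if $H^j_\n(N)$ really had finite $\k$-dimension then the statement would be essentially content-free, which it is not. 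The real content is that $H^j_\n(N)$, despite being unbounded below, is a \emph{cofinite} module over $H^0(A)$ (meaning precisely that its dual is a finitely generated $H^0(A)^{\op}$-module); this is a nontrivial consequence of condition $\chi$. Your subsequent appeal to ``graded Matlis duality'' does not supply this, because it is phrased as if $H^j_\n(N)$ were itself finitely generated over $H^0(A)$ (it is not, unless it is finite-dimensional), and Lemma~\ref{lem:matlis} concerns the equivalence $\D^{\mathrm{lf}}(A) \simeq \D^{\mathrm{lf}}(A^{\op})^{\op}$, which preserves local finiteness but says nothing about finite generation. The paper closes exactly this gap by citing the specific cofiniteness results of Artin--Zhang (\cite[Propositions 7.2 and 7.9]{AZ}), which is the step you need and cannot replace by the boundedness claim or by a generic duality appeal.

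The ``in particular'' clause is salvageable by your own intermediate observations: each truncation $H^j_\n(N)_{\ge i}$ is a finitely generated torsion module over the connected Noetherian ring $H^0(A)$, hence finite-dimensional, which gives $\dim_\k H^j_\m(M)_i < \infty$ without any boundedness below. But the main assertion, that $H^j_\m(M)^*$ is finitely generated over $H^0(A)^{\op}$, is not established by your argument.
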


\begin{proof}
By Proposition~\ref{prop:gen}, we may assume $M$ is a finitely generated $H^0(A)$-module. Fix $j \in \Z$, and recall that $\n$ denotes the homogeneous maximal ideal of $A^0$. We have $H^j_{\m}(M) \cong H^j_{\n}(M)$ by Proposition~\ref{prop:n}. Since $H^0(A)$ satisfies condition $\chi$, the result follows from \cite[Proposition 7.2 and Proposition 7.9]{AZ}.
\end{proof}


\begin{prop} 
\label{AsimeqRHomRR} 
Let $R \ce \RR\Gamma_\m(A)^*$. If both $H^0(A)$ and $H^0(A)^{\op}$ satisfy condition~$\chi$, and both $H^0(A)$ and $H^0(A)^{\op}$ have finite local cohomological dimension, then $A \cong\RR\uHom_A(R, R)$ in $\D(A)$. 
\end{prop}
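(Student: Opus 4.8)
The plan is to compute $\RR\uHom_A(R, R)$ directly using the identification $R = \RR\Gamma_\m(A)^*$ and the local duality isomorphism of \Cref{cor:LD}. First I would apply \Cref{lem:matlis}: since both $H^0(A)$ and $H^0(A)^{\op}$ satisfy condition $\chi$, \Cref{prop:fg} shows that $\RR\Gamma_\m(A)$ is locally finite, hence so is $R = \RR\Gamma_\m(A)^*$, and Matlis duality gives $\RR\uHom_A(R, R) \simeq \RR\uHom_{A^{\op}}(R^*, R^*)$. Now $R^* = \RR\Gamma_\m(A)^{**} \cong \RR\Gamma_\m(A)$ (again using local finiteness), so it suffices to produce an isomorphism $A \cong \RR\uHom_{A^{\op}}(\RR\Gamma_\m(A), \RR\Gamma_\m(A))$ in $\D(A)$.

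Next I would bring in \Cref{cor:LD}, applied over $A^{\op}$: taking $M = \RR\Gamma_\m(A) = \RR\Gamma_{\m^{\op}}(A)$ — note these agree in $\D(A^{\op}\otimes_\k A)$ by \Cref{thm:op}, whose hypotheses are exactly conditions (1) and (2) here — we get
$$
\RR\uHom_{A^{\op}}(\RR\Gamma_{\m^{\op}}(A), \RR\Gamma_{\m^{\op}}(A)^*) \simeq \RR\Gamma_{\m^{\op}}(\RR\Gamma_{\m^{\op}}(A))^*
$$
in $\D(A)$, $A^{\op}\otimes_\k A$-linearly. The target simplifies: $\RR\Gamma_{\m^{\op}}(A) \in \D_{\Tors}(A^{\op})$, so by \Cref{lem:counit} the counit $\RR\Gamma_{\m^{\op}}(\RR\Gamma_{\m^{\op}}(A)) \to \RR\Gamma_{\m^{\op}}(A)$ is an isomorphism; hence $\RR\Gamma_{\m^{\op}}(\RR\Gamma_{\m^{\op}}(A))^* \cong \RR\Gamma_{\m^{\op}}(A)^*$. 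Meanwhile $\RR\Gamma_{\m^{\op}}(A)^* \cong \RR\Gamma_\m(A)^* = R$ by \Cref{thm:op} and the definition of $R$. Combining, $\RR\uHom_{A^{\op}}(\RR\Gamma_\m(A), R) \simeq R$ in $\D(A)$. That is not yet the statement; what I actually want is $\RR\uHom_{A^{\op}}(\RR\Gamma_\m(A), \RR\Gamma_\m(A))$, i.e. the second argument should be $\RR\Gamma_\m(A)$ itself, not its dual. So instead I would apply \Cref{cor:LD} with $M = R^* \cong \RR\Gamma_\m(A)$ over $A^{\op}$ more carefully, or — cleaner — dualize: apply $(-)^*$ and \Cref{lem:matlis} to the local-duality isomorphism $\RR\Gamma_{\m^{\op}}(R)^* \simeq \RR\uHom_{A^{\op}}(R, \RR\Gamma_{\m^{\op}}(A)^*)$, using that $R$ is locally finite so $\RR\uHom_{A^{\op}}(R, -)^*$ converts to a derived tensor. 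The point is that all the pieces — $\RR\Gamma_\m(R)$, $\RR\Gamma_{\m^{\op}}(R)$, $R^*$, $\RR\Gamma_\m(A)$, $\RR\Gamma_{\m^{\op}}(A)$ — are interrelated by \Cref{cor:LD}, \Cref{thm:op}, \Cref{lem:counit}, and \Cref{lem:matlis}, and chaining these produces a canonical isomorphism $A \to \RR\uHom_A(R, R)$ in $\D(A)$.

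Concretely, the cleanest route: by \Cref{cor:LD} over $A$ with $M = A$ (a dg-$A$-$A$-bimodule), $\RR\Gamma_\m(A)^* \simeq \RR\uHom_A(A, \RR\Gamma_\m(A)^*) = \RR\Gamma_\m(A)^* = R$, which is a tautology — so I need a genuinely nontrivial input. That input is \Cref{cor:LD} applied with $M = R = \RR\Gamma_\m(A)^*$, giving $\RR\Gamma_\m(R)^* \simeq \RR\uHom_A(R, R)$ in $\D(A^{\op})$, $A^{\op}\otimes_\k A$-linearly. Then the whole proof reduces to showing $\RR\Gamma_\m(R)^* \cong A$ in $\D(A^{\op}\otimes_\k A)$ — equivalently, since everything is locally finite, that $\RR\Gamma_\m(R) \cong A^*$. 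But $\RR\Gamma_\m(R) \cong A^*$ is precisely one of the two defining isomorphisms of a balanced dualizing dg-module — except we have not assumed $R$ is balanced; we have assumed conditions (1) and (2) and set $R = \RR\Gamma_\m(A)^*$. So the real content is: \emph{under (1) and (2), $\RR\Gamma_\m(\RR\Gamma_\m(A)^*) \cong A^*$.} I would prove this by applying $(-)^*$: by \Cref{lem:matlis} it suffices to show $\RR\Gamma_\m(\RR\Gamma_\m(A)^*)^* \cong A$, and by \Cref{cor:LD} (with $M = \RR\Gamma_\m(A)^*$) this equals $\RR\uHom_A(\RR\Gamma_\m(A)^*, \RR\Gamma_\m(A)^*)$ — circular again. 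The way out of the circle is to use \Cref{thm:op} to pass one torsion functor to the opposite side: $\RR\Gamma_\m(\RR\Gamma_\m(A)^*) \cong \RR\Gamma_{\m^{\op}}(\RR\Gamma_\m(A)^*)$? No — \Cref{thm:op} requires the bimodule to lie in $\Db$ on both sides. So I would instead use local duality once on each side: $\RR\Gamma_\m(A)^* \cong \RR\uHom_A(A, R) = R$ and, dually over $A^{\op}$, relate $\RR\Gamma_{\m^{\op}}(A)^*$ to $R$, then use \Cref{lem:matlis} to transport $\RR\Gamma_\m(R)$ to $\RR\Gamma_{\m^{\op}}(R^*) = \RR\Gamma_{\m^{\op}}(\RR\Gamma_\m(A))$, which by \Cref{lem:counit} (torsion-ness from \Cref{prop:chi}(1)$\Leftrightarrow$(4)) collapses.

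The main obstacle I anticipate is exactly this bookkeeping of which torsion functor ($\RR\Gamma_\m$ vs.\ $\RR\Gamma_{\m^{\op}}$) and which duality ($(-)^*$, internal $\RR\uHom_A$, internal $\RR\uHom_{A^{\op}}$) acts on which side, and verifying that \Cref{lem:matlis} legitimately converts a derived internal Hom into a $(-)^*$ of a derived tensor (which needs local finiteness of the relevant modules — supplied by \Cref{prop:fg} under condition $\chi$). Once the identification $\RR\Gamma_\m(R) \cong A^*$ in $\D(A^{\op}\otimes_\k A)$ is in hand, \Cref{cor:LD} with $M = R$ immediately gives $A \cong \RR\Gamma_\m(R)^* \cong \RR\uHom_A(R, R)$ in $\D(A)$, completing the proof. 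I would also remark that this argument shows the isomorphism is $A^{\op}\otimes_\k A$-linear, though the statement only claims it in $\D(A)$.
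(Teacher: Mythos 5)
Your opening move is exactly right and matches the paper: Matlis duality (\Cref{lem:matlis}, justified by \Cref{prop:fg}) gives $\RuHom_A(R,R)\cong\RuHom_{A^{\op}}(\RR\Gamma_\m(A),\RR\Gamma_\m(A))$. But from there the argument never closes. You correctly diagnose two dead ends — applying local duality over $A^{\op}$ produces the wrong second argument, and taking $M=R$ in \Cref{cor:LD} is circular — but your proposed escape route contains an unsupported step: \Cref{lem:matlis} does \emph{not} ``transport $\RR\Gamma_\m(R)$ to $\RR\Gamma_{\m^{\op}}(R^*)$.'' That lemma interchanges $\RuHom_A$ with $\RuHom_{A^{\op}}$ on locally finite objects; it says nothing about swapping $\RR\Gamma_\m$ for $\RR\Gamma_{\m^{\op}}$. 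The identity $\RR\Gamma_\m(R)\cong A^*$ that your argument ultimately rests on is precisely the balanced condition, and you end the proof by assuming it rather than proving it.

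You also dismiss \Cref{thm:op} too quickly when you note it requires boundedness on both sides. That is exactly the reason to apply it to $A$ itself, which does satisfy $A\in\Db(A)$ and $A\in\Db(A^{\op})$, yielding $\RR\Gamma_\m(A)\cong\RR\Gamma_{\m^{\op}}(A)$ in $\D(A^{\op}\otimes_\k A)$. The crucial move you miss is what to do with this: replace only the \emph{second} copy of $\RR\Gamma_\m(A)$ in $\RuHom_{A^{\op}}(\RR\Gamma_\m(A),\RR\Gamma_\m(A))$ by $\RR\Gamma_{\m^{\op}}(A)$, and then observe that the first argument $\RR\Gamma_\m(A)$ lies in $\D_{\Tors}(A^{\op})$, so the adjunction underlying \Cref{lem:counit} strips off $\RR\Gamma_{\m^{\op}}$, giving $\RuHom_{A^{\op}}(\RR\Gamma_\m(A),A)$. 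A second application of Matlis duality turns this into $\RuHom_A(A^*,R)$, and only \emph{now} does \Cref{cor:LD} produce $\RR\Gamma_\m(A^*)^*$, which \Cref{lem:counit} collapses to $(A^*)^*\cong A$. The circularity you kept running into is broken exactly by this asymmetric use of \Cref{thm:op} on $A$ combined with the counit adjunction to remove a torsion functor from the target of a Hom.
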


\begin{proof} 
Applying Lemma~\ref{lem:matlis} and Proposition~\ref{prop:fg}, we have isomorphisms
\begin{equation}
\label{eqn:RR}
\RuHom_A(R, R) \cong \RuHom_{A^{\op}}(R^*, R^*) \cong \RuHom_{A^{\op}}(\RR\Gamma_\m(A), \RR\Gamma_\m(A))
\end{equation}
in $\D(A^{\op} \otimes_\k A)$. By Theorems~\ref{thm:chi} and~\ref{thm:op}, there is an isomorphism $\RR\Gamma_\m(A) \cong \RR\Gamma_{\m^{\op}}(A)$ in $\D(A \otimes_\kk A^{\op})$. In particular, $\RR\Gamma_\m(A) \in \D_{\Tors}(A^{\op})$, and so the adjunction between the inclusion $\D_{\Tors}(A^{\op}) \into \D(A^{\op})$ and the functor $\RR\Gamma_{\m^{\op}}$ implies that there is an isomorphism
\begin{equation}
\label{eqn:isofun}
\RuHom_{A^{\op}}(\RR\Gamma_\m(A), A ) \cong \RuHom_{A^{\op}}(\RR\Gamma_\m(A), \RR\Gamma_{\m^{\op}}(A))
\end{equation}
in $\D(A^{\op} \otimes_\k A)$. 
We therefore have the following isomorphisms in $\D(A \otimes_\kk A^{\op})$:
\begin{align*}
\RuHom_A(R, R) & \cong \RuHom_{A^{\op}}(\RR\Gamma_\m(A), \RR\Gamma_\m(A)) \\
& \cong \RuHom_{A^{\op}}(\RR\Gamma_\m(A), \RR\Gamma_{\m^{\op}}(A)) \\
& \cong \RuHom_{A^{\op}}(\RR\Gamma_\m(A), A) \\
& \cong \RuHom_{A}(A^*,R) \\
& \cong \dGamma_{\m}(A^*)^* \\
& \cong (A^*)^* \\
& \cong A.
\end{align*}
The first follows from~\eqref{eqn:RR}, the second from Theorems~\ref{thm:chi} and~\ref{thm:op}, the third from~\eqref{eqn:isofun}, the fourth from Lemma~\ref{lem:matlis} and Proposition~\ref{prop:fg}, and the fifth from Corollary~\ref{cor:LD}. The sixth isomorphism holds by Lemma~\ref{lem:counit}, since $A^* \in \D_{\Tors}(A)$.
\end{proof}

\begin{proof}[Proof of Theorem~\ref{thm:vdb}]
Let $R$ be a balanced dualizing dg-module for $A$. By \cite[Corollary 3.14]{serreduality_dgalgebras} and adjunction, there is an isomorphism $ \alpha \co \RR\Gamma_\m(A) \cong R^*$ in $\D(A)$. The map $\alpha$ is moreover an isomorphism in $\D(A^{\op} \otimes_\k A)$; this is because, when the object $M$ in the statement of the local duality theorem \cite[Theorem 3.11]{serreduality_dgalgebras} is a dg-$A$-$A$-bimodule, the local duality isomorphism in that statement is $A^{\op} \otimes_\k A$-linear. Thus, $R \cong \RR\Gamma_\m(A)^*$ in $\D(A^{\op} \otimes_\k A)$. It follows from \cite[Theorem 5.1(2)]{serreduality_dgalgebras}\footnote{It is assumed in \cite[Theorem 5.1]{serreduality_dgalgebras} that $A$ is Gorenstein, but this is not needed for part (2) of that theorem.}, the exact triangle in \cite[(4.3)]{serreduality_dgalgebras}, and \Cref{prop:n} that we have $\lcd(H^0(A)) \le -\inf(R)$ and $\lcd(H^0(A)^{\op}) \le -\inf(R)$.  For any $M \in \Db(A)$, \cite[Corollary 3.14]{serreduality_dgalgebras} implies that there is an isomorphism $\RR\Gamma_\m(M) \cong \RuHom_A(M, R)^*$. Since $R$ is a dualizing dg-module, $\RuHom_A(M, R) \in \Db(A)$. We conclude that $H^j_\m(M)_i = 0$ for $i \gg 0$, which implies condition $\chi$ for $A$ by the equivalence of (1) and (4) in Proposition~\ref{prop:chi}. A similar argument shows that condition $\chi$ holds for $A^{\op}$. \Cref{thm:chi} thus implies $H^0(A)$ and $H^0(A)^{\op}$ satisfy condition~$\chi$.

Conversely, let $R \ce \RR\Gamma_\m(A)^*$, and suppose conditions (1) and (2) in the statement hold. Let $M \in \Db(A^{\op})$ and $N \in \Db(A)$. By Corollary~\ref{cor:LD}, we have $\RuHom_A(N, R) \cong \RR\Gamma_\m(N)^*$ in $\D(A^{\op})$. Since $H^0(A)$ has finite local cohomological dimension and satisfies $\chi$, Propositions~\ref{prop:gen} and \ref{prop:fg} imply that $\RuHom_A(N, R) \in \Db(A^{\op})$. The same argument shows $\RuHom_{A^{\op}}(M, R) \in \Db(A)$. By Proposition~\ref{prop:fg}, we have both $R \in \Db(A)$ and $R \in \Db(A^{\op})$; thus, to prove that $R$ is a dualizing dg-module, we need only show $M$ and $N$ are $R$-reflexive. We show $M$ is $R$-reflexive; the proof for~$N$ is the same. We have isomorphisms
\begin{align*}
\RuHom_A(\RuHom_{A^{\op}}(M, R), R) & \cong \RuHom_A(\RR\Gamma_{\m^{\op}}(M)^*, R) \\
&\cong \RuHom_{A^{\op}}(\RR\Gamma_\m(A), \RR\Gamma_{\m^{\op}}(M)) \\
&\cong \RuHom_{A^{\op}}(\RR\Gamma_\m(A), M) \\
&\cong \RuHom_{A}(M^*, R) \\
&\cong \RR\Gamma_\m(M^*)^*\\
&\cong (M^*)^*\\
&\cong M
\end{align*}
in $\D(A^{\op})$, where the first follows from Corollary~\ref{cor:LD}, the second from Lemma~\ref{lem:matlis} and Proposition~\ref{prop:fg}, the third from adjunction and Theorem~\ref{thm:op}, the fourth from Lemma~\ref{lem:matlis} and Proposition~\ref{prop:fg} once again, the fifth from Corollary~\ref{cor:LD}, and the sixth by Lemma~\ref{lem:counit}, since $M^* \in \D_{\Tors}(A)$. Moreover, this isomorphism $M \cong \RuHom_A(\RuHom_{A^{\op}}(M, R), R)$ is given by the canonical evaluation map; to verify this, one may replace $M$ with a semifree resolution, check it in the case where $M = A$, and then pass to colimits. By Corollary~\ref{cor:LD} and Proposition~\ref{AsimeqRHomRR}, we have 
$
\RR\Gamma_\m(R)^* \cong \RuHom_A(R, R) \cong A
$
in $\D(A \otimes_\kk A^{\op})$, and so $R$ is a balanced dualizing dg-module. 
\end{proof}


The following consequence of Theorem~\ref{thm:vdb} implies Corollary~\ref{cor:intro}:

\begin{cor}
\label{cor:A0}
The following hold:
\begin{enumerate}
\item  The dg-algebra $A$ admits a balanced dualizing dg-module $R$ if and only if $H^0(A)$ admits a balanced dualizing complex $R'$, and $R' \cong \RuHom_A(H^0(A), R)$ in $\D(A^{\op} \otimes_k A)$. 
\item If $A^0$ is Noetherian and admits a balanced dualizing complex $R$, then $A$ admits a balanced dualizing dg-module $R'$, and $R' \cong \RuHom_{A^0}(A, R)$ in $\D((A^0)^{\op} \otimes_\k A^0)$.
\end{enumerate}
\end{cor}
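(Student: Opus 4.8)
The plan is to deduce both statements from Theorem~\ref{thm:vdb} together with Van den Bergh's theorem \cite[Theorem 6.3]{VdB_dualizing}, which asserts that a Noetherian connected graded $\k$-algebra $S$ admits a balanced dualizing complex if and only if $S$ and $S^{\op}$ satisfy condition~$\chi$ and have finite local cohomological dimension, in which case that complex is $\RR\Gamma_\m(S)^*$. The key observation is that Theorem~\ref{thm:vdb} characterizes the existence of a balanced dualizing dg-module for $A$ by \emph{exactly the same} pair of conditions on $H^0(A)$ and $H^0(A)^{\op}$, so the two existence statements are formally equivalent; what remains is to track the formulas for the dualizing objects through the local duality isomorphisms of Corollary~\ref{cor:LD} and the change-of-rings isomorphism of Proposition~\ref{prop:n}.

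For part~(1), I would first dispatch the existence equivalence: if $A$ has a balanced dualizing dg-module, then Theorem~\ref{thm:vdb} gives condition~$\chi$ and finite local cohomological dimension for $H^0(A)$ and $H^0(A)^{\op}$, whence \cite[Theorem 6.3]{VdB_dualizing} produces a balanced dualizing complex $R'$ for $H^0(A)$; conversely, a balanced dualizing complex for $H^0(A)$ forces those same conditions by \cite[Theorem 6.3]{VdB_dualizing}, and Theorem~\ref{thm:vdb} returns the balanced dualizing dg-module $R \cong \RR\Gamma_\m(A)^*$. For the displayed formula I would compute, in $\D(A^{\op}\otimes_\k A)$,
\[
\RuHom_A(H^0(A), R) \;\cong\; \RuHom_A\big(H^0(A), \RR\Gamma_\m(A)^*\big) \;\cong\; \RR\Gamma_\m\big(H^0(A)\big)^* \;\cong\; \RR\Gamma_\n\big(H^0(A)\big)^* \;\cong\; R',
\]
using Theorem~\ref{thm:vdb} for the first isomorphism, the bimodule-linear form of Corollary~\ref{cor:LD} with $M = H^0(A)$ for the second, Proposition~\ref{prop:n} together with the remark preceding it (identifying $\n$-torsion of an $H^0(A)$-module over $A^0$ and over $H^0(A)$) for the third, and Van den Bergh's formula $R' \cong \RR\Gamma_\n(H^0(A))^*$ for the last.

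For part~(2), I would begin by noting that $H^0(A) = A^0/\del(A^{-1})$ is a graded quotient algebra of $A^0$: since $A^1 = 0$ by connectedness, $H^0(A) = A^0/\im(\del\colon A^{-1}\to A^0)$, and the Leibniz rule shows $\im(\del|_{A^{-1}})$ is a two-sided homogeneous ideal of $A^0$. Because $A^0$, regarded as a dg-algebra with zero differential, is connected and satisfies Setup~\ref{setup} once $A^0$ is Noetherian, \cite[Theorem 6.3]{VdB_dualizing} applied to $A^0$ shows $A^0$ and $(A^0)^{\op}$ satisfy $\chi$ and have finite local cohomological dimension. These properties descend to the quotient $H^0(A)$: for a graded $H^0(A)$-module $M$, the local cohomology $H^j_\n(M)$ is the same whether computed over $A^0$ or over $H^0(A)$ (cf.\ the remark preceding Proposition~\ref{prop:n}), so $\lcd(H^0(A)) \le \lcd(A^0) < \infty$; and since $\Db(H^0(A))$ is generated by finitely generated $H^0(A)$-modules, the characterization of $\chi$ by vanishing of local cohomology in high internal degree (Proposition~\ref{prop:chi}, equivalence of~(1) and~(4)) yields $\chi$ for $H^0(A)$, and likewise for $H^0(A)^{\op}$. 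Theorem~\ref{thm:vdb} then provides the balanced dualizing dg-module $R' \cong \RR\Gamma_\m(A)^*$, and applying the bimodule-linear form of Corollary~\ref{cor:LD} to $A^0$ with $M = A$, together with the formula $R \cong \RR\Gamma_\n(A^0)^*$, gives
\[
\RuHom_{A^0}(A, R) \;\cong\; \RuHom_{A^0}\big(A, \RR\Gamma_\n(A^0)^*\big) \;\cong\; \RR\Gamma_\n(A)^* \;\cong\; \RR\Gamma_\m(A)^* \;\cong\; R'
\]
in $\D((A^0)^{\op}\otimes_\k A^0)$, the third isomorphism by Proposition~\ref{prop:n}.

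The main obstacle, beyond invoking Van den Bergh's theorem with its explicit formula, is the bookkeeping of bimodule structures: each of the local duality isomorphisms of Corollary~\ref{cor:LD} and the change-of-rings isomorphism of Proposition~\ref{prop:n} must be used in its $A^{\op}\otimes_\k A$-linear (resp.\ $(A^0)^{\op}\otimes_\k A^0$-linear) refinement, which for Proposition~\ref{prop:n} may require a brief upgrade along the lines of the semifree-resolution argument in the proof of Proposition~\ref{prop:tensor}. The other point requiring care is the descent of condition~$\chi$ and of finite local cohomological dimension along the surjection $A^0 \twoheadrightarrow H^0(A)$ in part~(2); this is where the characterization of $\chi$ via local cohomology and the ring-independence of the local cohomology of a module over a quotient ring do the real work.
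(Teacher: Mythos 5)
Your proposal is correct and follows essentially the same route as the paper: reduce both directions of the existence statement to the matching hypotheses in Theorem~\ref{thm:vdb} and Van den Bergh's theorem, descend $\chi$ and finite local cohomological dimension from $A^0$ to $H^0(A)$ via the $\D^{\mathrm{b}}(H^0(A))\subset\D^{\mathrm{b}}(A^0)$ inclusion and the ring-independence of local cohomology, and extract the displayed formulas from Corollary~\ref{cor:LD} together with Proposition~\ref{prop:n}. The only cosmetic difference is that the paper, perhaps out of caution, additionally invokes Theorem~\ref{thm:chi} to lift $\chi$ from $H^0(A)$ to $A$ itself in part (2), which is not actually needed to apply Theorem~\ref{thm:vdb}; your version correctly omits this step.
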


\begin{proof}
It is immediate from \Cref{thm:vdb} and~\cite[Theorem 6.3]{VdB_dualizing} that $A$ admits a balanced dualizing dg-module $R$ if and only if $H^0(A)$ admits a balanced dualizing complex $R'$, and the isomorphism $R'\cong \RuHom_A(H^0(A), R)$  follows from Corollary~\ref{cor:LD}. Suppose $A^0$ is Noetherian and admits a balanced dualizing complex $R$. By Theorem~\ref{thm:vdb}, condition~$\chi$ holds for $A^0$ and $(A^0)^{\op}$. By the equivalence of (1) and (4) in Proposition~\ref{prop:chi}, condition $\chi$ also holds for $H^0(A)$ and $H^0(A)^{\op}$, and so Theorem~\ref{thm:chi} implies that condition $\chi$ holds for $A$ and $A^{\op}$. Finally, every object in $\Db(H^0(A))$ is also an object in $\Db(A^0)$. Thus, by Proposition~\ref{prop:n} and Theorem~\ref{thm:vdb}, we have $\lcd(H^0(A)) < \infty$. One argues similarly that $\lcd(H^0(A)^{\op}) < \infty$. Thus, Theorem~\ref{thm:vdb} yields a balanced dualizing dg-module $R'$ of $A$, and Corollary~\ref{cor:LD} implies that there is an isomorphism $R' \cong \RuHom_{A^0}(A, R)$ in $\D((A^0)^{\op} \otimes_\k A^0)$. 
\end{proof}

\begin{remark}
 If the equivalent conditions of Theorem~\ref{thm:vdb} hold, and $R'$ is the balanced dualizing complex of $H^0(A)$, then we have $\lcd(H^0(A)) = \lcd(H^0(A)^{\op}) = -\inf(R')$~ \cite[Theorem 4.2(3)]{Yekutieli_Zhang}. 
\end{remark}

We record the following special case of Corollary~\ref{cor:A0}:
\begin{cor}
\label{cor:comm}
If $H^0(A)$ is Noetherian and commutative, then $A$ admits a balanced dualizing dg-module. 
\end{cor}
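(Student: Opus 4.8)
The plan is to reduce to the classical fact that every connected commutative Noetherian graded $\k$-algebra admits a balanced dualizing complex, and then invoke Corollary~\ref{cor:A0}.

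First I would note that $H^0(A)$ is itself a connected graded $\k$-algebra: since $A$ is connected (Setup~\ref{setup}), we have $A^j = 0$ for $j > 0$ and $A_0^j = 0$ for $j \ne 0$, so $H^0(A)$ is a quotient of the connected graded $\k$-algebra $A^0$ by a graded ideal, and in particular $H^0(A)_0 = \k$. By hypothesis $H^0(A)$ is moreover Noetherian and commutative.

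Next I would recall that a connected commutative Noetherian graded $\k$-algebra $S$ always admits a balanced dualizing complex; see e.g.~\cite{yekutielibook} or~\cite{VdB_dualizing}. If one wants an argument rather than a citation: by graded Nakayama $S$ is finitely generated over $\k$, so $S = P/I$ for a weighted polynomial ring $P = \k[x_1, \dots, x_n]$; the complex $P(-w)[n]$ with $w = \sum_i \deg(x_i)$ is a balanced dualizing complex for $P$ by graded local duality, and $\RuHom_P(S, P(-w)[n])$ is then a balanced dualizing complex for $S$. Alternatively, apply Van den Bergh's characterization~\cite[Theorem 6.3]{VdB_dualizing} (which is the special case $A = A^0$ of Theorem~\ref{thm:vdb}): since $S$ is commutative one has $S^{\op} = S$, finiteness of $\lcd(S)$ is Grothendieck vanishing ($\lcd(S) \le \dim S < \infty$), and condition $\chi$ for $S$ follows from condition $\chi$ for $P$ via the exact sequence relating $\uExt^*_P(\k, -)$ and $\uExt^*_S(\k, -)$.

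Finally, since $H^0(A)$ admits a balanced dualizing complex, Corollary~\ref{cor:A0}(1) shows that $A$ admits a balanced dualizing dg-module, namely $\RR\Gamma_\m(A)^*$ by Theorem~\ref{thm:vdb}. I do not expect a genuine obstacle here: the only ingredient beyond the results already established in this section is the classical existence statement for $S$, which is standard.
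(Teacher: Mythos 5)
Your proposal is correct, and your ``Alternatively, apply Van den Bergh's characterization'' paragraph is exactly the paper's proof: the authors simply note that condition~$\chi$ holds for any commutative Noetherian graded algebra, that $\lcd(H^0(A)) < \infty$ is immediate, and then invoke Theorem~\ref{thm:vdb}. Your primary route --- citing the classical existence of a balanced dualizing complex for $H^0(A)$ and then applying Corollary~\ref{cor:A0}(1) --- is logically equivalent (Corollary~\ref{cor:A0} is itself a repackaging of Theorem~\ref{thm:vdb} together with Van den Bergh's theorem), so both versions you give are fine; the paper just takes the shorter of the two.
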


\begin{proof}
Condition $\chi$ holds for any commutative Noetherian algebra, and $\lcd(H(A)^0) < \infty$ is immediate as well. The result therefore follows from Theorem~\ref{thm:vdb}. 
\end{proof}

\begin{example}
\label{ex:explicit}
Corollary~\ref{cor:A0} provides many examples of dg-algebras with balanced dualizing dg-modules. For instance, by results of Yekutieli~\cite{yek92}, a Noetherian $\Z$-graded connected $\k$-algebra admits a balanced dualizing complex when it is Gorenstein in the sense of Definition~\ref{def:gorenstein}, a finitely generated $\k$-algebra that is finite over its center (e.g. an Azumaya algebra), or a skew homogeneous coordinate ring in the sense of \cite[Definition 6.6]{yek92}. Thus, if $A^0$ or $H^0(A)$ is of any of these forms, then $A$ admits a balanced dualizing dg-module. 

As an explicit example, let $S$ be a Noetherian $\Z$-graded connected $\k$-algebra, and suppose $f_1, \dots, f_c \in S$ are homogeneous elements that are central, i.e. such that $f_i s = sf_i$ for all $s \in S$. Let~$E$ denote the exterior algebra $\bigwedge_\k(e_1, \dots, e_c)$, and equip the $\k$-algebra $K \ce E \otimes_\k S$ with the internal grading $\deg(e_i \otimes s) = \deg(f_i) + \deg(s)$, cohomological grading  $|e_i \otimes s| = -1$, and $S$-linear differential  $\del_K(e_{i_1} \cdots e_{i_t}) = \sum_{j = 1}^t (-1)^{j-1}e_{i_1} \cdots \widehat{e_{i_j}} \cdots e_{i_t} \otimes f_j$ . The complex $K$ is called the \emph{Koszul complex on $f_1, \dots, f_c$}, and it is a dg-algebra satisfying the conditions in Setup~\ref{setup}. If $S$ or $H^0(K)$ admits a balanced dualizing complex, then Corollary~\ref{cor:A0} implies that $K$ admits a balanced dualizing dg-module. 
\end{example}

\begin{example}
\label{ex:gor}
Suppose $A$ is Gorenstein (Definition~\ref{def:gorenstein}) and that $A$ has finite injective dimension as a right dg-$A$-module, in the sense of \cite[Definition 2.1]{finitistic_dimensions}. Denote the injective dimension of $A$ by $\on{inj} \dim(A)$. Assume also that $H^0(A)$ is Noetherian and that it admits a balanced dualizing complex. Choose $a, n \in \Z$ such that there is an isomorphism $\RuHom_A(\k, A) \cong \k(a)[-n]$ in $\D(A)$. \Cref{thm:vdb} implies that $A$ admits a balanced dualizing dg-module $R \cong \RR\Gamma_\m(A)^*$. We now show that there is an isomorphism $R \cong A(-a)[n]$ in~$\D(A)$. It follows from \cite[Theorem 3.24]{minamoto}\footnote{In the notation of \cite[Theorem 3.24]{minamoto}, $\mathrm{id}(A)=\injdim(A)-\inf(A)$.} that $A(-a)[n]$ possesses a minimal injective (ifij) resolution (\cite[Definition 3.19]{minamoto}) of finite length. Following the procedure outlined in the second paragraph of the proof of \cite[Theorem 3.18]{serreduality_dgalgebras}, we can construct a $K$-injective resolution $J$ of $A(-a)[n]$ built out of the aforementioned minimal injective resolution. Proceeding as in paragraphs three and four of the proof of \cite[Theorem 3.18]{serreduality_dgalgebras}, one sees that that  $R \cong A(-a)[n]$ in $\D(A)$. We caution the reader that it need not be the case that $R \cong A(-a)[n]$ in $\D(A^{\op} \otimes_\k A)$, even when $A = A^0$; see e.g. \cite[Theorem 9.2]{VdB_dualizing}.
\end{example}

\begin{remark}
  The assumption that $\on{inj} \dim(A) < \infty$ in Example~\ref{ex:gor} is superfluous when $A$ is graded commutative: see the first paragraph of the proof of \cite[Theorem 3.18]{serreduality_dgalgebras}.
\end{remark}



\section{Serre duality}
\label{sec:serre}

Theorem~\ref{thm:vdb} implies that Serre duality holds for the noncommutative spaces associated to a host of dg-algebras. Before we state the result (Corollary~\ref{cor:serre}), we recall some background and notation concerning noncommutative geometry over a dg-algebra (see e.g. \cite[Section 4.1]{serreduality_dgalgebras}). Let $\D_{\Qgr}(A)$ denote the quotient $\D(A) / \D_{\Tors(A)}$, i.e. the derived category of the noncommutative space associated to $A$. Given $M \in \D(A)$, let $\widetilde{M}$ denote the image of $M$ under the canonical functor $\D(A) \to \D_{\Qgr}(A)$. We write 
$\Ext^j(\widetilde{M}, \widetilde{N}) \ce \Hom_{\D_{\Qgr}(A)}(\widetilde{M}, \widetilde{N}[j])$. By \cite[Proposition 4.5]{serreduality_dgalgebras}, for $M \in \Db(A)$ and $N \in \D(A)$, we have:
$$
\Ext^j(\widetilde{M}, \widetilde{N})  \cong \underset{d \to \infty}{\on{colim}} \text{ } \RHom_A(M_{\ge d}, N[j]).
$$
Letting $\OO \ce \widetilde{A}$, we write $\RR^j\Gamma(\widetilde{M}) \ce \Ext^j(\OO, \widetilde{M})$. For $M \in \D(A)$ and $i \in \Z$, we set $\widetilde{M}(i) \ce \widetilde{M(i)}$.

\begin{cor}[Serre duality]
\label{cor:serre}
Suppose $H^0(A)$ and $H^0(A)^{\op}$ satisfy condition $\chi$, and assume $\lcd(H^0(A)) < \infty$ and $\lcd(H^0(A)^{\op}) <~\infty$. For all $M \in \Db(A)$ and all $j \in \Z$, there is a natural isomorphism $\RR^j\Gamma(\widetilde{M}) \cong \Ext^{-j-1}(\widetilde{M}, \widetilde{R})^*$, where $R$ is the balanced dualizing dg-module of $A$. 
\end{cor}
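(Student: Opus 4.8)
The plan is to derive Corollary~\ref{cor:serre} from Theorem~\ref{thm:vdb} together with the noncommutative Serre duality theorem \cite[Theorem 1.3]{serreduality_dgalgebras}. First I would note that the hypotheses of Corollary~\ref{cor:serre} are exactly conditions~(1) and~(2) of Theorem~\ref{thm:vdb}, so $A$ admits a balanced dualizing dg-module, and in fact $R \cong \RR\Gamma_\m(A)^*$ in $\D(A^{\op} \otimes_\k A)$. With this $R$ in hand, the asserted natural isomorphism $\RR^j\Gamma(\widetilde M) \cong \Ext^{-j-1}(\widetilde M, \widetilde R)^*$ is precisely the conclusion of \cite[Theorem 1.3]{serreduality_dgalgebras}; nothing further is needed beyond checking that the running hypotheses of \cite{serreduality_dgalgebras} invoked in that theorem hold here, which is immediate from Setup~\ref{setup}.

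It is also instructive to see the argument directly, and I would sketch it as follows. Set $\RR\Gamma(\widetilde M) \ce \colim_d \RuHom_A(A_{\ge d}, M) \in \D(A)$, so that $\RR^j\Gamma(\widetilde M) = H^j(\RR\Gamma(\widetilde M))_0$ by \cite[Proposition 4.5]{serreduality_dgalgebras}; applying $\colim_d \RuHom_A(-, M)$ to the triangles $A_{\ge d} \to A \to A/A_{\ge d} \to$ yields a natural triangle $\RR\Gamma_\m(M) \to M \to \RR\Gamma(\widetilde M) \to$ in $\D(A)$. Since $M \in \Db(A)$ and $H^0(A)$ satisfies $\chi$, Proposition~\ref{prop:fg} makes all three terms locally finite, so applying $(-)^*$ and the local duality isomorphism $\RR\Gamma_\m(M)^* \cong \RuHom_A(M, R)$ of Corollary~\ref{cor:LD} produces a triangle $\RR\Gamma(\widetilde M)^* \to M^* \to \RuHom_A(M, R) \to$ in $\D(A^{\op})$. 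On the other hand, applying $\colim_d \RuHom_A(-, R)$ to the triangles $M_{\ge d} \to M \to M/M_{\ge d} \to$ gives a triangle $\colim_d \RuHom_A(M/M_{\ge d}, R) \to \RuHom_A(M, R) \to \colim_d \RuHom_A(M_{\ge d}, R) \to$; since each $M/M_{\ge d}$ lies in $\Db(A)$, has finite-dimensional cohomology, and is torsion, Corollary~\ref{cor:LD} and Lemma~\ref{lem:counit} give $\RuHom_A(M/M_{\ge d}, R) \cong (M/M_{\ge d})^*$, hence $\colim_d \RuHom_A(M/M_{\ge d}, R) \cong M^*$; and $H^k(\colim_d \RuHom_A(M_{\ge d}, R))_0 \cong \Ext^k(\widetilde M, \widetilde R)$, again by \cite[Proposition 4.5]{serreduality_dgalgebras}. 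Granting that the two resulting maps $M^* \to \RuHom_A(M, R)$ coincide, their cones agree, so $\colim_d \RuHom_A(M_{\ge d}, R) \cong \RR\Gamma(\widetilde M)^*[1]$; taking $H^{-j-1}(-)_0$ and dualizing (using that $\RR^j\Gamma(\widetilde M)$ is finite-dimensional by Proposition~\ref{prop:fg}) gives $\RR^j\Gamma(\widetilde M) \cong \Ext^{-j-1}(\widetilde M, \widetilde R)^*$, with naturality in $M$ inherited from the triangles.

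The one genuinely delicate step in the self-contained route — and the reason I would in practice just cite \cite[Theorem 1.3]{serreduality_dgalgebras} — is the coincidence of the two maps $M^* \to \RuHom_A(M, R)$. Unwinding Corollary~\ref{cor:LD} through Proposition~\ref{prop:tensor}, the first map is $\RuHom_A(M, -)$ applied to the map $A^* \to R$ dual to the counit $\RR\Gamma_\m(A) \to A$, composed with the adjunction isomorphism $M^* \cong \RuHom_A(M, A^*)$; the second is $\RuHom_A(M, -)$ applied to the counit $\RR\Gamma_\m(R) \to R$, composed with the same adjunction via the balanced identification $\RR\Gamma_\m(R) \cong A^*$. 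These agree because $(-)^*$ intertwines $\RR\Gamma_\m$ with $\RR\Gamma_{\m^{\op}}$ and exchanges the units and counits of the corresponding torsion adjunctions, together with the bimodule-level coincidence $\RR\Gamma_\m(R) \cong \RR\Gamma_{\m^{\op}}(R)$ supplied by Theorem~\ref{thm:op} — exactly the bookkeeping already carried out in \cite[Theorem 1.3]{serreduality_dgalgebras}.
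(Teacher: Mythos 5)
Your first paragraph is precisely the paper's proof: the hypotheses of the corollary are exactly conditions (1) and (2) of Theorem~\ref{thm:vdb}, which supplies the balanced dualizing dg-module $R$, and the asserted isomorphism is then the conclusion of \cite[Theorem 1.3]{serreduality_dgalgebras}. The additional direct sketch you give is extra material not recorded in the paper (which simply cites that theorem) and is not needed for the corollary.
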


\begin{proof}
The existence of the balanced dualizing dg-module $R$ is guaranteed by Theorem~\ref{thm:vdb}, and the isomorphism follows from \cite[Theorem 1.3]{serreduality_dgalgebras}.
\end{proof}

We now turn to proving Corollary~\ref{cor:serrefunctor}. Let $\D_{\qgr}(A)$ denote the essential image of $\Db(A)$ under the canonical functor $\D(A) \to \D_{\Qgr}(A)$, and let $\Perf_{\qgr}(A)$ be the subcategory of $\D_{\qgr}(A)$ generated by compact objects. We call $\Perf_{\qgr}(A)$ the category of \emph{perfect complexes} in $\D_{\qgr}(A)$. 

\begin{defn}
Let $\mathcal{T}$ be a triangulated category. 
\begin{enumerate}
\item The category $\mathcal{T}$ is called \emph{proper} if $\dim_\k \Hom_{\mathcal{T}}(T, T') < \infty$ for all $T, T' \in \mathcal{T}$. 
\item Assume $\mathcal{T}$ is proper. A functor $S \co \mathcal{T} \to \mathcal{T}$ is called a \emph{Serre functor} if there is a natural isomorphism $\Hom_{\mathcal{T}}(T, T') \cong \Hom_{\mathcal{T}}(T', S(T))^*$ for all objects $T, T' \in \mathcal{T}$.
\end{enumerate}
\end{defn}

\begin{prop}
\label{prop:proper}
The following hold:
\begin{enumerate}
\item If $A$ satisfies condition $\chi$, then $\D_{\qgr}(A)$ is proper.
\item If $\lcd(H^0(A)) < \infty$, then the category $\Perf_{\qgr}(A)$ is classically generated by $\{\OO(i) \text{ : } i \in \Z\}$.
\end{enumerate}
\end{prop}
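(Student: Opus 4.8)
\textbf{Proof proposal for Proposition~\ref{prop:proper}.}

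For part (1), the plan is to reduce the properness of $\D_{\qgr}(A)$ to the vanishing statements already packaged in Proposition~\ref{prop:chi}. Given $M, N \in \Db(A)$, I would use the formula $\Ext^j(\widetilde{M}, \widetilde{N}) \cong \colim_d \RHom_A(M_{\ge d}, N[j])$ from \cite[Proposition 4.5]{serreduality_dgalgebras}. The key point is that, up to the torsion subcategory, $M$ differs from its truncation $M_{\ge d}$ by an object with finite-dimensional cohomology; more precisely, the cone of $M_{\ge d}\to M$ has cohomology concentrated in internal degrees $< d$, and by \Cref{prop:extfinite} these cohomology groups are finite-dimensional in each internal degree and vanish for $i\ll 0$, so the cone lies in $\D_{\Tors}(A)$ once we also account for the fact that $H(M)$ is finitely generated over $H(A)$. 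Applying the long exact $\Hom_{\D_{\Qgr}(A)}$-sequence, it suffices to bound $\dim_\k \Hom_{\D_{\Qgr}(A)}(\widetilde{C}, \widetilde{N})$ and $\dim_\k\Hom_{\D_{\Qgr}(A)}(\widetilde{M},\widetilde{D})$ for torsion objects $C,D$; but the image of a torsion object in $\D_{\Qgr}(A)$ is zero, so these vanish, and we are reduced to a single fixed $\RHom_A(M_{\ge d}, N[j])$ for $d$ large. This space is finite-dimensional: by Proposition~\ref{prop:chi}(2) (with $M$ there taken to be $\k$, or more generally using the equivalence of (1) and (2)) together with \Cref{prop:extfinite}, the relevant $\uExt$-groups are finite-dimensional in each internal degree and vanish outside a bounded range of internal degrees once $d\gg 0$, so summing over the finitely many surviving internal degrees gives finiteness. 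I would phrase this last step carefully using \Cref{lem:localcoh}(1), which controls the internal degrees in which $\Ext^m_A(T,N)$ can be nonzero in terms of those of $\Ext^m_A(\k,N)$ and the torsion object $T$.

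For part (2), I would follow the standard recipe for identifying generators of a category of perfect complexes on a noncommutative projective scheme, as in Artin--Zhang and its descendants. The claim is that $\Perf_{\qgr}(A)$, defined as the subcategory of $\D_{\qgr}(A)$ generated by compact objects, coincides with the thick subcategory generated by the twists $\{\OO(i): i\in\Z\}$. One inclusion is easy: each $\OO(i) = \widetilde{A(i)}$ is compact in $\D_{\Qgr}(A)$ because $A(i)$ is compact in $\D(A)$ and the quotient functor $\D(A)\to\D_{\Qgr}(A)$ preserves compactness (its right adjoint $\RR\Gamma_\m$ commutes with coproducts when $\lcd(H^0(A))<\infty$, by \Cref{lem:colimits}); hence the $\OO(i)$ lie in $\Perf_{\qgr}(A)$ and so does the thick subcategory they generate. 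For the reverse inclusion, the strategy is: (a) every compact object of $\D_{\Qgr}(A)$ is a direct summand of $\widetilde{P}$ for some perfect $P\in\D(A)$, using that $\D_{\Qgr}(A)$ is compactly generated by the $\OO(i)$ and a theorem of Neeman/Thomason identifying compacts in a quotient; (b) a perfect dg-$A$-module $P$ is built from finitely many shifts and twists of $A$ by finitely many cones (this is \cite[Definition 2.14]{BROWN2025110035} together with induction on the rank, as already invoked in the proof of \Cref{prop:tensor}); hence $\widetilde{P}$ lies in the thick closure of $\{\OO(i)\}$; (c) a direct summand of such an object lies in the thick closure as well, since thick subcategories are closed under summands. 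Combining (a)--(c) gives $\Perf_{\qgr}(A)\subseteq\langle\OO(i):i\in\Z\rangle$, and with the easy inclusion we get equality.

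The main obstacle I anticipate is step (a) in part (2): making precise that the compact objects of the Verdier quotient $\D(A)/\D_{\Tors}(A)$ are exactly the summands of images of compact objects of $\D(A)$. This requires knowing that $\D(A)$ is compactly generated (true, with generators the twists $A(i)$), that $\D_{\Tors}(A)$ is generated by a set of compact objects of $\D(A)$ — one should check that the twists $\k(i)$, or the finite-length modules $A/A_{\ge d}\,(i)$, are compact and generate $\D_{\Tors}(A)$, which uses that every torsion module is a filtered colimit of finite-length submodules — and then applying the Neeman--Thomason localization theorem, which identifies the compacts in the quotient with the idempotent completion of the essential image of the compacts. Care is needed because $A/A_{\ge d}$ is not perfect over $A$ in general, so ``compact in $\D_{\Tors}(A)$'' and ``perfect over $A$'' must be kept distinct; what is true is that $A/A_{\ge d}$ is compact in $\D(A)$ (being a finite iterated extension of shifts of $\k(i)$'s, each of which is compact by the connectedness and finiteness in Setup~\ref{setup}), which is what the localization argument needs. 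Once these ingredients are in place the rest is formal.
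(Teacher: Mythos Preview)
For part~(1) your idea is ultimately workable but the logic is tangled. The observation that $M/M_{\ge d}$ is torsion only gives $\widetilde{M_{\ge d}}\cong\widetilde{M}$ in $\D_{\Qgr}(A)$, hence $\Hom_{\D_{\Qgr}}(\widetilde{M_{\ge d}},\widetilde{N})\cong\Hom_{\D_{\Qgr}}(\widetilde{M},\widetilde{N})$; this is tautological and does \emph{not} reduce the $\Hom$ in the quotient to a single $\RHom_A(M_{\ge d},N[j])$. What actually achieves that reduction is showing the colimit $\colim_d\Ext^j_A(M_{\ge d},N)$ stabilizes, and for this one analyzes the cone $M_d = M_{\ge d}/M_{\ge d+1}$ and uses \Cref{lem:localcoh}(1) together with condition~$\chi$ to see $\Ext^m_A(M_d,N)_0=0$ once $d$ is large---which is exactly the step you gesture at in your last sentence. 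The paper's argument is organized differently: it places $\Hom_{\D_{\qgr}(A)}(\widetilde{M},\widetilde{N})$ in an exact sequence between $\RHom_A(M,N)$ and $\colim_d\,\RHom_A(M/M_{\ge d},N[1])$, then bounds both ends using \Cref{prop:extfinite}(1) and \Cref{lem:localcoh}(2).

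For part~(2) there is a genuine gap. Your step~(a) relies on the Neeman--Thomason localization theorem, whose hypothesis is that $\D_{\Tors}(A)$ is generated by objects compact \emph{in $\D(A)$}. You assert that $\k(i)$ and $A/A_{\ge d}$ are compact in $\D(A)$ ``by the connectedness and finiteness in Setup~\ref{setup}'', but this is false: compact in $\D(A)$ means perfect, and $\k$ is perfect over $A$ only when $A$ is regular. Already for $A=\k[x]/(x^2)$ (or any non-regular connected graded algebra) $\k$ has infinite projective dimension, so neither $\k$ nor $A/A_{\ge d}$ is compact in $\D(A)$, and the localization theorem does not apply. The paper avoids this entirely: after checking that each $\OO(i)$ is compact (via \Cref{lem:colimits} and the triangle relating $\RR\Gamma_\m$ and $\RR\Gamma_*$), it shows the $\OO(i)$ have trivial right orthogonal in $\D_{\Qgr}(A)$ by using the fully faithful right adjoint $\RR\Gamma_*$ of the quotient functor, and then invokes the Ravenel--Neeman theorem \cite[Theorem 2.1.2]{BV03} that in a compactly generated category a set of compacts with trivial right orthogonal classically generates all compacts. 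You should replace your Neeman--Thomason argument with this one.
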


\begin{proof}
We have an exact sequence
$$
\RHom_A(M, N) \to \Hom_{\D_{\qgr}(A)}(\widetilde{M}, \widetilde{N}) \to \underset{d \to \infty}{\on{colimit}} \text{ } \RHom_A(M/M_{\ge d}, N[1]).
$$
The term on the left is finite dimensional by Proposition~\ref{prop:extfinite}(1), and the term on the right is finite dimensional by Proposition~\ref{prop:extfinite}(1) and Lemma~\ref{lem:localcoh}(2). This proves (1). To prove (2), we argue as in \cite[Lemma 4.2.2]{BV03}. It follows from \Cref{lem:colimits} and the exact triangle in \cite[(4.3)]{serreduality_dgalgebras} that each $\OO(i)$ is a compact object. 
Suppose $\widetilde{M}$ is in the right orthogonal of $\OO(i)$ for all $i$; that is, $\Hom_{\D_{\qgr}(A)}(\OO(i), \widetilde{M}) = 0$ for all $i$. The canonical functor $\D(A) \to \D_{\Qgr}(A)$ admits a fully faithful right adjoint $\RR\Gamma_*$~\cite[Proposition 3.3]{BROWN2025110035}. This adjunction implies that $\RR\Gamma_*(\widetilde{M}) = 0$; since $\RR\Gamma_*$ is fully faithful, we conclude that $\widetilde{M} = 0$. Part (2) now follows from \cite[Theorem 2.1.2]{BV03}, which Bondal-Van den Bergh attribute to Ravenel and Neeman~\cite{neeman}. 
\end{proof}

\begin{proof}[Proof of Corollary~\ref{cor:serrefunctor}]
By \Cref{thm:chi} and Proposition~\ref{prop:proper}(1), the category $\D_{\qgr}(A)$ is proper. By Example~\ref{ex:gor}, $A$ admits a balanced dualizing dg-module $R$, and we have $R \cong A(-a)[n]$ in~$\D(A)$. Fix $N \in \Db(A)$, and let $\on{Vect}(\k)$ denote the category of $\k$-vector spaces. We define functors
$F \co \Perf_{\qgr}(A) \to \on{Vect}(\k)$ and $G \co \Perf_{\qgr}(A) \to \on{Vect}(\k)$ given by $F(\widetilde{M}) = \Hom_{\D_{\Qgr}(A)}(\widetilde{M}, \widetilde{N})$ and $G(\widetilde{M}) = \Hom_{\D_{\Qgr}(A)}(\widetilde{N}, \widetilde{M}(-a)[n])$.
By Corollary~\ref{cor:serre}, there is a natural isomorphism $F(\OO(i)) \cong G(\OO(i))$ for all $i$. Thus, by Proposition~\ref{prop:proper}(2), $F$ and $G$ are isomorphic functors. 
\end{proof}

\bibliographystyle{amsalpha}
\bibliography{references}
\Addresses
\end{document}